\documentclass[3p,a4paper,fleqn, preprint]{elsarticle}

\def\tsc#1{\csdef{#1}{\textsc{\lowercase{#1}}\xspace}}
\tsc{WGM}
\tsc{QE}

\usepackage{mathtools,amssymb,amsthm}
\usepackage{algorithm}
\usepackage{booktabs}
\usepackage{multirow}
\usepackage{algpseudocode}
\usepackage{array}
\usepackage{caption}
\usepackage{tikz}
\usetikzlibrary{positioning, fit, backgrounds}
\usepackage{enumitem}
\usepackage{url}
\newtheorem{theorem}{Theorem}
\newtheorem{proposition}{Proposition}
\newtheorem{lemma}{Lemma}
\newtheorem{definition}{Definition}

\theoremstyle{definition}
\theoremstyle{remark}
\newtheorem*{remark}{Remark}

\begin{document}
\let\WriteBookmarks\relax
\def\floatpagepagefraction{1}
\def\textpagefraction{.001}

\cortext[cor1]{Corresponding author}

\title{A Convex Quasilinearization Method for Solving Nonlinear PDEs with Physics-Informed Neural Networks}
\address[tamu]{Texas A\&M University, College Station, Texas, U.S.A.}
\author[tamu]{Gbenga~T.~Awojinrin}
\author[tamu]{Abdul-Akeem~Olawoyin}
\author[tamu]{Rami~M.~Younis \corref{cor1}}
\ead{younis@tamu.edu}

\begin{abstract}
We present a numerical method for the forward solution of nonlinear partial differential equations (PDEs) in which Bellman-Kalaba quasilinearization reduces the nonlinear problem to a sequence of linear subproblems, each discretized by collocation onto a trial space that is linear in its parameters and solved by a single direct linear least-squares QR factorization.
The trial space, which we term \emph{linear-in-learnables} (\textsc{LiL}), comprises any representation whose trainable parameters enter linearly, including random-feature extreme learning machines, spectral polynomial bases, and trigonometric expansions, each realized as a physics-informed neural network. Subsequently, the method replaces the nonconvex gradient-based training that limits standard PINNs with a convex per-step solve.
We establish local Newton-Kantorovich convergence of the outer iteration to a residual-limited neighborhood under an explicit smallness condition, with the limiting accuracy governed by the best-approximation residual of the trial space rather than by an optimization tolerance.
The method, denoted \textsc{LiL-Q}, is assessed on seven benchmarks spanning scalar nonlinear PDEs (Bratu, viscous Burgers, Buckley-Leverett), coupled systems (plane-strain elasticity and the incompressible Navier-Stokes equations in two and three spatial dimensions), and steady-state Darcy flow with heterogeneous permeability.
Across these problems, \textsc{LiL-Q} converges in single-digit outer iterations in most cases and in no more than a few dozen even at the coarsest basis sizes, independent of parameter count.
When the exact solution lies in the span of the trial space, as in the manufactured elasticity case, the method recovers it to machine precision in a single solve.
On the Navier-Stokes benchmarks, it matches or exceeds published PINN solvers using up to two orders of magnitude fewer trainable parameters and without gradient-based optimization.
Conditioning of the collocation matrix is analyzed across all benchmarks and is shown to govern the round-off contribution to accuracy through a rank-revealing column-pivoted QR factorization.
Taken together, these results close much of the cost-and-reliability gap between physics-informed and classical solvers on problems where smooth global trial spaces are effective, while leaving a residual gap on problems where sparse-matrix discretizations retain a structural advantage.
The convexity of the per-step problem, together with the choice of trial space, governs both convergence reliability and attainable accuracy.
\end{abstract}

\maketitle

\section{Introduction}
\label{sec:introduction}

Nonlinear partial differential equations (PDEs) govern a wide range of phenomena across scientific and engineering disciplines.
Because analytical solutions are seldom available, the numerical solution of these equations has motivated decades of research into finite element, finite volume, and spectral methods, balancing accuracy, stability, and computational cost~\cite{LeVeque2007,StrangFix1973}.
Physics-informed neural networks (PINNs) emerged as an alternative paradigm, embedding PDEs directly into a neural network loss function and training the network parameters to satisfy the governing equations~\cite{Raissi2019,Karniadakis2021}.
The formulation is mesh-free, requires no labeled data for forward problems, and yields a differentiable solution representation that unifies forward and inverse problems within a single framework.
Since their introduction, PINNs have seen numerous methodological variants, including loss function modifications such as adaptive weighting~\cite{Wang2021,McClenny2023,Yu2022}, training strategies including curriculum and causal sequential approaches~\cite{Krishnapriyan2021}, architectural enhancements such as Fourier feature embeddings and hard constraint enforcement~\cite{Tancik2020,Lu2021}, domain decomposition methods~\cite{Jagtap2020cPINN}, variational and weak formulations~\cite{Kharazmi2021}, and Bayesian extensions for uncertainty quantification, among others.
Applications span fluid mechanics, heat transfer, subsurface flow, biomedical modeling, and materials science~\cite{Mao2020,Fuks2020,Haghighat2021,Cuomo2022}.
Despite this proliferation, PINNs have not become general-purpose tools for nonlinear PDE simulations. Training remains computationally expensive, convergence is not guaranteed even for linear problems~\cite{Shin2020,DeRyckMishra2024}, and systematic comparisons indicate that PINNs struggle to match the efficiency of classical solvers on standard benchmarks~\cite{McGreivyHakim2024,Grossmann2024}.

A central obstacle is the nonconvex nature of the PINN training problem. This nonlinear complexity arises from two distinct sources of nonlinearity: physical nonlinearity from the governing PDE, and architectural nonlinearity from the layered composition of nonlinear activation functions in the network.
When a nonlinear differential operator acts on a solution representation that is itself nonlinear in its trainable parameters, the resulting loss landscape can admit saddle points, local minima, and pathological training dynamics that have been extensively documented: spectral bias toward low-frequency components~\cite{Wang2022,Wang2021Eigenvector}, causality violation in which networks fit later-time solutions before resolving initial conditions~\cite{Wang2024Causality,Krishnapriyan2021}, gradient imbalance across loss terms~\cite{Wang2021}, failure to propagate information from boundaries into the interior~\cite{Krishnapriyan2021}, and the dominance of optimization error among error components~\cite{DeRyckMishra2024}.
Existing PINN variants address these pathologies through mechanisms that target one source at a time. Adaptive loss weighting, curriculum learning, and domain decomposition navigate the nonconvex landscape more effectively but do not alter its topology. \textit{Linearize-then-learn} strategies, including Newton-informed frameworks~\cite{Hao2024NeurIPS}, homotopy methods~\cite{Huang2022HomPINNs}, locally linearized approaches~\cite{Liu2024LLPINN}, and causal sequential training~\cite{Wang2024Causality,Mattey2022,Roy2024}, address the physical nonlinearity by decomposing the problem into linear or simpler subproblems.
However, when the solution remains parameterized by a standard neural network, the architectural nonlinearity persists and each subproblem remains nonconvex. The well-documented struggles of PINNs on linear PDEs confirm that architectural nonlinearity alone suffices to impede training~\cite{Shin2020,Wang2022}.
Conversely, architecturally linear representations such as Extreme Learning Machines~\cite{Dwivedi2020,DongLi2021,Calabro2021} perform well on linear PDEs but degrade on nonlinear ones, indicating that physical nonlinearity must also be addressed.
Neither partial strategy, applied in isolation, recovers convexity.
Read together, these two failure modes have produced an apparent tension in the empirical experience of the field: expressive nonlinear networks have the representational capacity for difficult PDE solutions but are hard to train to high accuracy, while architecturally linear alternatives train reliably but lack the expressivity needed beyond linear problems.

We present \textsc{LiL-Q}, a PINN training methodology that eliminates both sources of nonlinearity simultaneously and, in doing so, changes the character of the training problem rather than merely easing its navigation.
The nonlinear PDE is solved through a sequence of linear subproblems generated by Bellman-Kalaba quasilinearization~\cite{BellmanKalaba1965}, and the solution at each step is represented by a network whose trainable parameters enter linearly.
Because the governing equation is then linear and the representation is linear in its parameters, the training objective at each outer iteration is convex with a unique minimizer (under a mild rank condition on the collocation matrix). Gradient descent, learning-rate schedules, and adaptive loss reweighting are replaced by a single direct linear least-squares solve.
Convexity is a property of each linear subproblem rather than of the original nonlinear problem. The outer quasilinearization that links the subproblems retains the local convergence of the underlying Newton-Kantorovich iteration, which we establish in Theorem~\ref{thm:nk_convergence}.
The reliability of physics-informed training, on this view, is not a matter of better navigating a nonconvex landscape but of removing the nonconvexity altogether. Once that is done, convergence is deterministic and the attainable accuracy is dictated by the approximation power of the chosen representation rather than by the optimizer.

A related construction appeared as one component of the domain-decomposition framework of Dong and Li~\cite{DongLi2021}, whose Newton-LLSQ variant likewise linearizes the PDE and fits each linear subproblem with the output-layer coefficients of an Extreme Learning Machine (ELM).
There, it was a secondary element evaluated only with random features, and was reported to be less accurate than the authors' primary nonlinear solver.
Our analysis shows that this behavior reflects the uncontrolled approximation error of random features rather than the linearize-then-fit composition itself.
Casting the method over the \textit{full class of linear-in-learnables} representations, we prove that nested orthogonal and trigonometric bases confer guaranteed monotone error reduction (Proposition~\ref{prop:monotone}) and a spectral convergence rate that random features lack, and we establish a convergence theory in which the limiting accuracy is governed by the best-approximation residual of the subspace (Theorems~\ref{thm:stationary_residual},~\ref{thm:residual_bounds}), with local quadratic convergence under an explicit smallness condition (Theorem~\ref{thm:nk_convergence}).
The method further operates at a scale not addressed by these random-feature schemes, including coupled, multi-field, and four-dimensional systems, and solves for each iterate directly so that a single representation persists across the outer iterations.

We assess the approach on problems of increasing difficulty: the Bratu, viscous Burgers, and Buckley-Leverett equations (scalar nonlinear PDEs), plane-strain linear elasticity and the incompressible Navier-Stokes equations in two and three spatial dimensions (coupled systems), and steady-state Darcy flow with highly heterogeneous permeability.
For the scalar benchmarks, we include two intermediate formulations that each address only one source of nonlinearity (\textsc{NiL-Q} and \textsc{LiL-N}), alongside the standard PINN (\textsc{NiL-N}), isolating the contribution of each source. For the coupled and heterogeneous problems, we compare against published PINN baselines and classical numerical references.
Across benchmarks, \textsc{LiL-Q} converges in single-digit outer iterations in most cases, largely independent of the parameter count, recovers the exact solution to machine precision when it lies in the span of the basis, and on the Navier-Stokes systems matches or exceeds the accuracy of published PINN solvers using up to two orders of magnitude fewer trainable parameters and seconds to minutes of computation in place of hours of gradient-based training.

The remainder of this paper is organized as follows.
Section~\ref{sec:math_prelim} presents the mathematical background.
Section~\ref{sec:proposed_approach} develops the proposed approach and establishes properties of the iteration.
Section~\ref{sec:numerical_experiments} presents the numerical experiments.
Section~\ref{sec:discussion} discusses the results, and Section~\ref{sec:conclusion} concludes.

\section{Background and Preliminaries}
\label{sec:math_prelim}

We consider problems of the form,
\begin{equation}
\label{eq:general_pde}
\mathcal{N}(u) = 0, \qquad \mathbf{x} \in \bar{\Omega},
\end{equation}
where $\Omega \subset \mathbb{R}^d$ is the problem domain (spatial, temporal, or both), $u \colon \bar{\Omega} \to \mathbb{R}$ is the unknown field, and $\mathcal{N}$ is a nonlinear residual operator that encodes the governing PDE together with all auxiliary conditions (boundary, initial, or data-driven constraints).
This formulation encompasses any differential problem posed in residual form.

Classical discretization methods approximate~\eqref{eq:general_pde} by a system of algebraic equations solved by Newton-type iterations on a computational mesh, and decades of development have made these approaches efficient for a broad class of problems.
Physics-informed machine learning (PIML) offers an alternative in which the solution $u$ is approximated by a parametric model $\hat{u}(\mathbf{x};\,\boldsymbol{\theta})$, where $\boldsymbol{\theta} \in \mathbb{R}^P$ collects the trainable parameters and $P$ denotes their total number.
Let $\{\mathbf{x}_i\}_{i=1}^{N}$ denote collocation points distributed over $\bar{\Omega}$, including interior, boundary, and (where applicable) initial-condition points.
Evaluating the residual operator at each collocation point produces the discrete residual vector $\mathbf{R}(\boldsymbol{\theta}) \in \mathbb{R}^N$, whose $i$-th component is $R_i(\boldsymbol{\theta}) = \mathcal{N}(\hat{u}(\cdot;\,\boldsymbol{\theta}))(\mathbf{x}_i)$.
The PIML training problem seeks optimal parameters by minimizing the sum of squared residuals; i.e.,
\begin{equation}
\label{eq:piml_minimization}
\boldsymbol{\theta}^{*}
  = \operatorname*{arg\,min}_{\boldsymbol{\theta} \in \mathbb{R}^P} \mathcal{L}(\boldsymbol{\theta}),
\end{equation}
where,
\begin{equation}
\label{eq:loss_function}
\mathcal{L}(\boldsymbol{\theta}) \coloneqq \bigl\| \mathbf{R}(\boldsymbol{\theta}) \bigr\|_2^2.
\end{equation}
When the parametric model is a neural network, the resulting method is termed a physics-informed neural network.

\subsection{Neural Networks}
\label{sec:neural_networks}

A fully connected feedforward network with $L$ hidden layers with widths $\{n_\ell\}_{\ell=1}^{L}$ defines the approximation through the recursive composition,
\begin{equation}\label{eq:feedforward}
\begin{aligned}
\mathbf{z}^{(0)} &= \mathbf{x}, \\[2pt]
\mathbf{z}^{(\ell)} &= \sigma\bigl(\mathbf{W}^{(\ell)}\mathbf{z}^{(\ell-1)} + \mathbf{b}^{(\ell)}\bigr), \quad \ell = 1, \ldots, L, \\[2pt]
\hat{u}(\mathbf{x};\,\boldsymbol{\theta}) &= \mathbf{w}^{\mathsf{T}}\mathbf{z}^{(L)} + b,
\end{aligned}
\end{equation}
where $\mathbf{W}^{(\ell)} \in \mathbb{R}^{n_\ell \times n_{\ell-1}}$ and $\mathbf{b}^{(\ell)} \in \mathbb{R}^{n_\ell}$ are the weight matrix and bias vector at hidden layer $\ell$, the output layer is parameterized by $\mathbf{w} \in \mathbb{R}^{n_L}$ and $b \in \mathbb{R}$, and $\sigma \colon \mathbb{R} \to \mathbb{R}$ is a nonlinear activation function applied elementwise.
When the full parameter set $\boldsymbol{\theta} = \{\mathbf{W}^{(\ell)}, \mathbf{b}^{(\ell)}\}_{\ell=1}^{L} \cup \{\mathbf{w}, b\}$ is trained, the map $\boldsymbol{\theta} \mapsto \hat{u}(\mathbf{x};\,\boldsymbol{\theta})$ involves nested compositions of affine transformations and nonlinear activations. We refer to such networks as nonlinear-in-learnables (\textsc{NiL}).
Alternatively, if all hidden-layer parameters are held fixed and only the output-layer coefficients $\mathbf{w}$ are trained, the map from trainable parameters to network output becomes linear. We expand on this construction and its implications in Section~\ref{sec:proposed_approach}.

\subsection{The PINN Training Problem}
\label{sec:pinns}

Standard PINN training employs a \textsc{NiL} network as the solution proxy $\hat{u}(\mathbf{x};\,\boldsymbol{\theta})$ for a nonlinear PDE.
The loss~\eqref{eq:loss_function} is then a nonlinear function of $\boldsymbol{\theta}$ through both the PDE operator $\mathcal{N}$ acting on the network output and the implicit dependence of that output on nested compositions of affine maps and activations.
Training typically proceeds by gradient-based optimization such as first-order methods including Adam or quasi-Newton methods such as L-BFGS, each seeking parameters that satisfy the first-order stationarity condition for a local minimum.
From the least-squares structure of~\eqref{eq:loss_function}, this condition takes the form,
\begin{equation}
\label{eq:stationarity}
\nabla_{\boldsymbol{\theta}} \mathcal{L}(\boldsymbol{\theta}) = \mathbf{J}(\boldsymbol{\theta})^{\mathsf{T}} \mathbf{R}(\boldsymbol{\theta}) = \mathbf{0},
\end{equation}
where $\mathbf{J}(\boldsymbol{\theta}) \in \mathbb{R}^{N \times P}$ is the Jacobian of the residual vector with respect to the network parameters.
The stationarity condition~\eqref{eq:stationarity} constitutes a system of $P$ coupled nonlinear equations in which at least two nonlinearities are intertwined.
First, the residual $\mathbf{R}(\boldsymbol{\theta})$ depends nonlinearly on $\boldsymbol{\theta}$ because the PDE operator $\mathcal{N}$ acts on the network output $\hat{u}$, which is itself a nonlinear function of its parameters.
Second, each row of the Jacobian involves the Fr\'{e}chet derivative $\mathcal{N}'(\hat{u})$ evaluated at the current network output, coupling the linearization of the physics to the current iterate.
Third, the parameter sensitivity $\partial_{\boldsymbol{\theta}} \hat{u}$ inherits the nested nonlinearity of the network architecture.
Consequently, the training problem~\eqref{eq:loss_function} is a nonlinear least-squares problem whose loss landscape is nonconvex, admitting local minima, saddle points, and plateaus that impede gradient-based optimization~\cite{Rathore2024}.
Empirical studies have confirmed the practical consequences where spectral bias toward low-frequency components, stalled convergence in the presence of multiscale physics, and failure to reduce residuals to acceptable levels even after extensive training can occur~\cite{Wang2021,Krishnapriyan2021}.
We refer to this standard configuration, in which a \textsc{NiL} architecture is trained to directly approximate the solution of a nonlinear PDE, as \textsc{NiL-N}.

To quantify the computational cost of \textsc{NiL-N} training in a hardware-agnostic manner, we employ floating-point operation (FLOP) counts.
For a feedforward network with $P$ parameters evaluated at $N$ collocation points, a forward pass requires approximately $2NP$ FLOPs while the backward pass for gradient computation requires $4NP$ FLOPs, yielding the widely cited estimate of $6NP$ FLOPs per training iteration~\cite{Goodfellow2016}.
Automatic differentiation through the network for computing PDE-required derivatives adds further cost proportional to $NP$, with the precise constant depending on the operator structure.
We write the per-iteration cost as $C_{\textsc{NiL}}\, NP$, where $C_{\textsc{NiL}} \geq 6$ is a problem-dependent constant that absorbs the forward and backward pass cost, the derivative overhead, and optimizer-specific computations.
If training terminates after $K$ iterations, the total FLOP count for \textsc{NiL-N} is
\begin{equation}
\label{eq:flop_niln}
\mathcal{F}_{\textsc{NiL-N}} = \mathcal{O}(K \, N \, P).
\end{equation}

\section{Proposed Approach}
\label{sec:proposed_approach}

Standard PINN training (\textsc{NiL-N}) optimizes a \textsc{NiL} network directly on the nonlinear PDE, contending with the nonconvex loss landscape analyzed in Section~\ref{sec:pinns}.
The approach proposed here operates differently.
Rather than training on the nonlinear problem as posed, we first linearize the governing PDE about the current iterate using Bellman-Kalaba quasilinearization~\cite{BellmanKalaba1965}, producing a linear subproblem at each outer iteration.
We then approximate the solution of this subproblem not with a \textsc{NiL} network, but with a linear-in-learnables (\textsc{LiL}) representation. \textsc{LiL} representations are networks or prescribed basis expansions in which the trainable parameters enter the solution linearly.
Because the governing equation is linear (by quasilinearization) and the solution representation is linear in its trainable parameters (by construction), the training objective (mean square residual error) at each outer iteration is strictly convex, and neither the physical nonlinearity of the PDE nor the architectural nonlinearity of the network parameterization enters the loss landscape.
We term this approach \textsc{LiL-Q} (Linear-in-Learnables with Quasilinearization).
The following subsections develop the quasilinearization scheme (Section~\ref{sec:quasilinearization}), the \textsc{LiL} representation (Section~\ref{sec:lil_representations}), the resulting convex training formulation (Section~\ref{sec:linear_ls}), a worked example on the Bratu equation (Section~\ref{sec:bratu_example}), and the properties of the iteration (Section~\ref{sec:properties}). 

\subsection{Bellman-Kalaba Quasilinearization}
\label{sec:quasilinearization}
 
The technique of quasilinearization, introduced by Bellman and Kalaba~\cite{BellmanKalaba1965}, applies the Newton-Kantorovich method in function space to solve nonlinear differential equations.
The original formulation was motivated by the desire to obtain analytical or semi-analytical solutions to nonlinear boundary-value problems arising in optimal control and mathematical physics~\cite{Mandelzweig2001}. By linearizing the nonlinear operator about a current iterate, the method converts the nonlinear problem into a sequence of linear PDEs that can, in favorable cases, be solved in closed form (akin to an Exact Newton-Kantorovich method).
Given a nonlinear differential equation $\mathcal{N}(u) = 0$, the method generates a sequence of approximations $\{\hat{u}^{(k)}\}$ by linearizing $\mathcal{N}$ about the current solution iterate using its Fr\'{e}chet derivative, denoted as the linear operator $\mathcal{N}'(u)$ satisfying
\begin{equation}
\label{eq:frechet_definition}
\mathcal{N}(u + \epsilon v) = \mathcal{N}(u) + \epsilon\,\mathcal{N}'(u)(v) + O(\epsilon^2).
\end{equation}
Assuming Fr\'{e}chet differentiability, the operator $\mathcal{N}'(u)$ is linear in its argument, and for any functions $v$ and $w$ and scalars $\alpha$ and $\gamma$, 
\[ \mathcal{N}'(u)(\alpha v + \gamma w) = \alpha\,\mathcal{N}'(u)(v) + \gamma\,\mathcal{N}'(u)(w).\]
Applying this linearization about the current iterate $\hat{u}^{(k)}$ yields the quasilinearized iteration generated by the subproblem
\begin{equation}
\label{eq:quasilinear_subproblem}
\mathcal{N}'(\hat{u}^{(k)})\bigl(\hat{u}^{(k+1)}\bigr) = \mathcal{N}'(\hat{u}^{(k)})\bigl(\hat{u}^{(k)}\bigr) - \mathcal{N}(\hat{u}^{(k)}), \qquad k=1,2,\ldots,
\end{equation}
which is a linear equation for $\hat{u}^{(k+1)}$ with right-hand side depending only on the previous iterate.
The linearized equation is solved directly for the next iterate $\hat{u}^{(k+1)}$, rather than for an update $\delta u^{(k)} = \hat{u}^{(k+1)} - \hat{u}^{(k)}$ as is more common in standard Newton formulations. The two approaches are algebraically equivalent when solved exactly, but the direct formulation proves more natural in the present context because the full solution, rather than a correction, is represented at every iteration.
 
When solved exactly in an infinite-dimensional function space, the quasilinearization iteration recovers the classical Newton-Kantorovich method with local quadratic convergence under standard regularity assumptions~\cite{Deuflhard2011}.
In this work, we solve each quasilinear subproblem approximately using a finite-dimensional linear-in-learnables representation, described in the following subsection. The accuracy of this representation at each step strongly influences the convergence behavior of the outer iteration. A related iteration is the Inexact Newton method (IN) of~\cite{DemboEisenstatSteihaug1982} in which the linear subproblems are solved inexactly, and up to a prescribed residual error tolerance. IN methods guarantee local convergence properties under a the condition of a \textit{forcing function} that essentially tightens the tolerance asymptotically towards the zero residual solution. In the proposed methods, this forcing mechanism is absent, and leads us to develop the relationships formalized in Section~\ref{sec:properties} relating convergence properties to approximation capacity and numerical conditioning.

\subsection{Linear-in-Learnables Representations}
\label{sec:lil_representations}
 
The quasilinearized subproblem~\eqref{eq:quasilinear_subproblem} is a linear PDE for $\hat{u}^{(k+1)}$.
To preserve this linearity in the trainable parameters, we represent the solution at each step using a network in which all hidden-layer parameters are frozen and only the output-layer coefficients are trained.
In this setting, the hidden-layer outputs become fixed functions of the input, and the map from the remaining trainable coefficients to the network output is linear.
This characterization holds irrespective of the depth of the frozen portion. That is, whether the hidden representation comprises a single layer or multiple composed layers, fixing all hidden-layer weights and biases yields a network that is linear in its trainable parameters.
For a single hidden layer with $P$ nodes, the construction takes the explicit form,
\begin{equation}
\label{eq:linear_network}
\hat{u}(\mathbf{x};\,\boldsymbol{\beta}) = \sum_{p=1}^{P} \beta_p\,\sigma(\mathbf{w}_p^{\mathsf{T}}\mathbf{x} + b_p),
\end{equation}
where the hidden-layer weights $\{\mathbf{w}_p\}$ and biases $\{b_p\}$ are predetermined, $\sigma$ is a nonlinear activation function, and $\boldsymbol{\beta} \in \mathbb{R}^P$ collects the trainable output-layer coefficients.
The ELM framework~\cite{Huang2006} popularized this construction by sampling the hidden-layer parameters from a probability distribution and solving for $\boldsymbol{\beta}$ via linear least squares.
Deterministic initialization strategies, in which the hidden-layer parameters are set according to prescribed rules rather than sampled randomly, offer an alternative.
A third variant first trains the hidden-layer parameters in a conventional learning phase, then freezes them to form the linear network structure. This two-phase approach produces basis functions adapted to the problem at hand while retaining the computational advantages of the linear output-layer solve.
The choice of activation function is flexible: sigmoidal, hyperbolic tangent, sinusoidal, and Gaussian radial basis functions are all admissible.
While classical universal approximation results apply to fully-trained networks, the ELM theory~\cite{Huang2006} guarantees that a single-hidden-layer network with fixed random weights can approximate any continuous function on a compact domain to arbitrary accuracy given sufficiently many nodes. However, this guarantee does not ensure that increasing $P$ monotonically reduces the approximation error for a fixed target function.
 
The linearity of the map $\boldsymbol{\beta} \mapsto \hat{u}(\cdot;\,\boldsymbol{\beta})$ is the property that makes this construction suitable for the quasilinearization scheme, and this property is not unique to neural network architectures.
Any representation of the form,
\begin{equation}
\label{eq:lil_representation}
\hat{u}(\mathbf{x};\,\boldsymbol{\beta}) = \sum_{p=1}^{P} \beta_p\,\phi_p(\mathbf{x}),
\end{equation}
where the basis functions $\{\phi_p\}_{p=1}^{P}$ are prescribed prior to training, possesses the same linearity.
The linear network~\eqref{eq:linear_network} is a special case of~\eqref{eq:lil_representation} with $\phi_p(\mathbf{x}) = \sigma(\mathbf{w}_p^{\mathsf{T}}\mathbf{x} + b_p)$.
Other instantiations include orthogonal polynomial bases (Chebyshev, Legendre), spline bases, and wavelet families. For smooth target functions, these offer well-characterized convergence rates with monotonically decreasing approximation error as $P$ increases, though these guarantees require regularity that may not hold for all problems.
 
We refer to any representation satisfying~\eqref{eq:lil_representation} as \emph{linear-in-learnables} (\textsc{LiL}), encompassing both network-based and prescribed-basis instantiations.
The choice between the two categories involves tradeoffs. Network-based representations inherit the universal approximation property but may lack guaranteed monotonic error reduction with increasing $P$, and prescribed polynomial, compactly supported functions, or trigonometric bases offer deterministic convergence for smooth functions but require regularity assumptions on the target.
In either case, the defining property is that $\boldsymbol{\beta} \mapsto \hat{u}(\mathbf{x};\,\boldsymbol{\beta})$ is linear, which ensures that the residual of the quasilinearized subproblem~\eqref{eq:quasilinear_subproblem}, when evaluated at collocation points, depends linearly on the unknown coefficients.

\subsection{Convex Training Formulation}
\label{sec:linear_ls}
 
Substituting the \textsc{LiL} representation~\eqref{eq:lil_representation} into the quasilinearized subproblem~\eqref{eq:quasilinear_subproblem} and exploiting the linearity of $\mathcal{N}'(\hat{u}^{(k)})$ yields,
\begin{equation}
\label{eq:linearized_expansion}
\sum_{p=1}^{P} \beta_p^{(k+1)} \mathcal{N}'(\hat{u}^{(k)})(\phi_p) = \mathcal{N}'(\hat{u}^{(k)})(\hat{u}^{(k)}) - \mathcal{N}(\hat{u}^{(k)}).
\end{equation}
This is a functional equation in which the unknown coefficients $\boldsymbol{\beta}^{(k+1)}$ appear linearly.
Evaluating both sides at the $N$ collocation points $\{\mathbf{x}_i\}_{i=1}^{N} \subset \bar{\Omega}$ and stacking the resulting equations produces the linear system,
\begin{equation}
\label{eq:linear_system}
\mathbf{A}^{(k)} \boldsymbol{\beta}^{(k+1)} = \mathbf{f}^{(k)},
\end{equation}
where $\mathbf{A}^{(k)} \in \mathbb{R}^{N \times P}$ is the collocation matrix with entries,
\begin{equation}
\label{eq:collocation_matrix_entry}
A_{ip}^{(k)} = \bigl[\mathcal{N}'(\hat{u}^{(k)})(\phi_p)\bigr](\mathbf{x}_i),
\end{equation}
and $\mathbf{f}^{(k)} \in \mathbb{R}^N$ is the right-hand side vector with entries,
\begin{equation}
\label{eq:rhs_entry}
f_i^{(k)} = \bigl[\mathcal{N}'(\hat{u}^{(k)})(\hat{u}^{(k)}) - \mathcal{N}(\hat{u}^{(k)})\bigr](\mathbf{x}_i).
\end{equation}
Because $\mathcal{N}$ encodes both the governing PDE and all auxiliary conditions, the rows of $\mathbf{A}^{(k)}$ and $\mathbf{f}^{(k)}$ take different forms depending on the type of collocation point. At interior points, they encode the linearized PDE operator and at boundary or initial-condition points, they encode the corresponding auxiliary constraint.
When $N > P$, the system~\eqref{eq:linear_system} is overdetermined, and we solve it in the least-squares sense, paralleling the formulation~\eqref{eq:loss_function}, i.e.,
\begin{equation}
\label{eq:lil_least_squares}
\boldsymbol{\beta}^{(k+1)} = \arg\min_{\boldsymbol{\beta} \in \mathbb{R}^P} \; \mathcal{L}^{(k)}(\boldsymbol{\beta}),
\end{equation}
where,
\begin{equation}
\label{eq:lil_least_squares_loss}
\mathcal{L}^{(k)}(\boldsymbol{\beta}) \coloneqq \bigl\| \mathbf{A}^{(k)} \boldsymbol{\beta} - \mathbf{f}^{(k)} \bigr\|_2^2.
\end{equation}
The loss $\mathcal{L}^{(k)}$ is a convex quadratic in $\boldsymbol{\beta}$ and its Hessian $2(\mathbf{A}^{(k)})^{\mathsf{T}}\mathbf{A}^{(k)}$ is constant, independent of $\boldsymbol{\beta}$, and positive semi-definite, so $\mathcal{L}^{(k)}$ is convex and attains its global minimum value when $\mathbf{A}^{(k)}$ has full column rank. Equivalently when $\sigma_{\min}(\mathbf{A}^{(k)}) > 0$ the Hessian is positive definite and the minimizer is unique. Otherwise the minimum is attained on an affine subspace and a QR factorization for instance, returns the unique minimum-norm solution.
In either case the global optimum is reached by a single direct computation, with no dependence on initialization.
This stands in direct contrast to the \textsc{NiL-N} training problem~\eqref{eq:loss_function}, where the gradient~\eqref{eq:stationarity} is a nonlinear function of the parameters, the Hessian varies across parameter space, and the resulting loss landscape is nonconvex.
Setting the gradient to zero yields a linear normal equation, and the minimizer is computed via QR factorization of $\mathbf{A}^{(k)}$.
Each outer iteration of \textsc{LiL-Q} is therefore a convex optimization problem with a closed-form solution, in contrast to the nonconvex optimization problem that standard PINN training faces at every gradient step.
The complete \textsc{LiL-Q} procedure is summarized in Algorithm~\ref{alg:lilq}.
 
\begin{algorithm}
\caption{The \textsc{LiL-Q} method for solving~\eqref{eq:general_pde}.}
\label{alg:lilq}
\begin{algorithmic}[1]
\State \textbf{Input:} Nonlinear residual operator $\mathcal{N}$; basis functions $\{\phi_p\}_{p=1}^{P}$; collocation points $\{\mathbf{x}_i\}_{i=1}^{N} \subset \bar{\Omega}$; initial coefficients $\boldsymbol{\beta}^{(0)} \in \mathbb{R}^P$; update tolerance $\tau > 0$; maximum iterations $K_{\max}$.
\State \textbf{Output:} Coefficient vector $\boldsymbol{\beta}^*$ such that $\hat{u}(\mathbf{x}; \boldsymbol{\beta}^*) = \sum_{p=1}^{P} \beta_p^* \phi_p(\mathbf{x})$ approximates the solution of~\eqref{eq:general_pde}.
\State Form the initial approximation $\hat{u}^{(0)}(\mathbf{x}) = \sum_{p=1}^{P} \beta_p^{(0)} \phi_p(\mathbf{x})$.
\For{$k = 0, 1, 2, \ldots, K_{\max}-1$}
    \State \parbox[t]{0.90\linewidth}{\textbf{Assemble the collocation matrix and right-hand side:} For each collocation point $\mathbf{x}_i$, $i = 1, \ldots, N$, and each basis function $\phi_p$, $p = 1, \ldots, P$, compute
    \begin{equation*}
    A_{ip}^{(k)} = \bigl[\mathcal{N}'(\hat{u}^{(k)})(\phi_p)\bigr](\mathbf{x}_i),
    \end{equation*}
    and,
    \begin{equation*}
    f_i^{(k)} = \bigl[\mathcal{N}'(\hat{u}^{(k)})(\hat{u}^{(k)}) - \mathcal{N}(\hat{u}^{(k)})\bigr](\mathbf{x}_i).
    \end{equation*}
Stack these into the collocation matrix $\mathbf{A}^{(k)} \in \mathbb{R}^{N \times P}$ and right-hand side $\mathbf{f}^{(k)} \in \mathbb{R}^N$.}
    \vspace{0.2cm}
    \State \parbox[t]{0.90\linewidth}{\textbf{Solve the linear least-squares problem:}
    \begin{equation*}
    \boldsymbol{\beta}^{(k+1)} = \arg\min_{\boldsymbol{\beta} \in \mathbb{R}^P} \left\| \mathbf{A}^{(k)} \boldsymbol{\beta} - \mathbf{f}^{(k)} \right\|_2^2.
    \end{equation*}
    Compute $\boldsymbol{\beta}^{(k+1)}$ via QR factorization of $\mathbf{A}^{(k)}$.}
    \vspace{0.2cm}
    \State \textbf{Update the approximation:} Set $\hat{u}^{(k+1)}(\mathbf{x}) = \sum_{p=1}^{P} \beta_p^{(k+1)} \phi_p(\mathbf{x})$.
    \State \textbf{Check convergence:} If $\|\boldsymbol{\beta}^{(k+1)} - \boldsymbol{\beta}^{(k)}\|_2 < \tau$, terminate and return $\boldsymbol{\beta}^* = \boldsymbol{\beta}^{(k+1)}$.
\EndFor
\State \textbf{return} $ \boldsymbol{\beta}^{(K_{\max})}$.
\end{algorithmic}
\end{algorithm}
 
Because the system~\eqref{eq:linear_system} is overdetermined, the least-squares solution generally incurs a nonzero residual 
$\mathbf{R}_{\mathrm{lin}}^{(k)} := \mathbf{A}^{(k)}\boldsymbol{\beta}^{(k+1)} 
- \mathbf{f}^{(k)}$, reflecting the extent to which the chosen basis cannot exactly satisfy the quasilinearized PDE at all collocation points simultaneously.
This residual is determined by the approximation capacity of the basis and the conditioning of the collocation matrix, and not by the optimizer.
Because the outer quasilinearization iteration relies on each subproblem being solved with sufficient accuracy, the magnitude of this linear residual directly influences the convergence behavior of the overall \textsc{LiL-Q} scheme. This critical relationship is characterized formally in Section~\ref{sec:properties}.

It is worth contrasting this formulation with classical spectral collocation, with which it shares a reliance on global basis functions but differs fundamentally in how the discrete problem is posed and solved.
Spectral collocation enforces the residual to vanish \emph{exactly} at a set of carefully chosen nodes, producing a square system (i.e., $N = P$). For a nonlinear PDE this is a nonlinear algebraic system whose solution is a root-finding problem, typically addressed by Newton-like iteration on the full system, with the attendant sensitivity to initialization and the possibility of converging to spurious roots.
The \textsc{LiL-Q} formulation instead over-samples the domain ($N > P$) and solves the resulting overdetermined system in the least-squares sense~\eqref{eq:lil_least_squares}, and through the outer quasilinearization, each inner solve is a single convex linear least-squares problem rather than a nonlinear root-find.
The two approaches draw their accuracy from the approximation power of the global basis, which for smooth solutions decays spectrally (Section~\ref{sec:properties}).
What differs is the mechanism of solution and what it costs and buys.
Enforcing exact nodal satisfaction couples the accuracy of spectral collocation to the placement of the $P$ nodes and to the well-posedness of the square system, and for nonlinear problems inherits the difficulties of nonlinear algebraic solves. The least-squares formulation relaxes exact satisfaction, accepting a nonzero residual floor governed by the best-approximation error of the basis, in exchange for a convex inner solve that is independent of initialization, a system whose conditioning is handled by a single rank-revealing factorization, and a robustness to node placement that accommodates scattered points, irregular geometries, and coupled multi-field systems for which constructing a well-posed square collocation system could be challenging.
This trade is not necessarily universally advantageous. For instance, for smooth solutions on regular domains where a square collocation system is well-posed, exact nodal collocation can attain comparable accuracy with fewer points, since least-squares over-sampling is not without cost.
Rather, relaxing to least-squares is the choice that enables the convexity and robustness on which the present method rests, and which make it applicable to the nonlinear, heterogeneous, and high-dimensional problems considered in Section~\ref{sec:numerical_experiments}.
 
\subsection{Worked Example: Bratu Equation}
\label{sec:bratu_example}
 
Consider the Bratu boundary value problem,

\begin{equation}
\label{eq:bratu_example}
0 = \mathcal{N}(\hat{u})(\mathbf{x}_i) :=
\begin{cases}
\Delta \hat{u}(\mathbf{x}) + \lambda\, e^{\hat{u}(\mathbf{x})}, & \mathbf{x} \in \Omega, \\[4pt]
\hat{u}(\mathbf{x}) - u_b\left(\mathbf{x}\right), & \mathbf{x} \in \partial\Omega,
\end{cases}
\end{equation}
with Dirichlet boundary data $u_b$ and parameter $\lambda \neq 0$. In this case, the Fr\'{e}chet derivative is the parameterized operator,
\begin{equation}
\label{eq:bratu_expansion}
\mathcal{N}'(\hat{u}^{(k)})(v) = \lim_{\epsilon \to 0}{\frac{d}{d \epsilon} \mathcal{N}(\hat{u}^{(k)} + \epsilon v) } = 
\begin{cases}
\Delta v + \lambda e^{\hat{u}^{(k)}} v, & \mathbf{x} \in \Omega, \\[4pt]
v , & \mathbf{x} \in \partial\Omega,
\end{cases}
\end{equation}
Starting with an initial guess $\hat{u}^{(0)}$, quasilinearization generates a sequence of iterates, $\hat{u}^{(k)}$, defined as solutions to the sequence of linear subproblems,
\begin{equation}
\label{eq:bratu_linearized}
\begin{cases}
\Delta \hat{u}^{(k)} + \lambda e^{\hat{u}^{(k-1)}} \hat{u}^{(k)} = \lambda e^{\hat{u}^{(k-1)}} \bigl( \hat{u}^{(k-1)} - 1 \bigr), & \mathbf{x} \in \Omega, \\[4pt]
\hat{u}^{(k)} = u_b\left(\mathbf{x}\right), & \mathbf{x} \in \partial\Omega,
\end{cases}, \qquad k=1,2,\ldots.
\end{equation}
Typical convergence criteria for Newton-like iterations are a combination of contraction in the Newton update and sufficiently low nonlinear residual error. Our proposed method only relies on a stopping criterion based on contraction of the updates (i.e., stagnation or stall) or a maximum allowable number of iterations. As we show in the next subsection, this is because of a limit on the attainable residual error that is connected to the underlying representation of the iterate. We represent the iterates using the \textsc{LiL} expansion~\eqref{eq:lil_representation},
\begin{equation}
\label{eq:bratu_lil}
\hat{u}^{(k)}(\mathbf{x}) = \sum_{p=1}^{P}{ \beta_p^{(k)} \phi_p(\mathbf{x})},
\end{equation}
and substituting into~\eqref{eq:bratu_linearized} gives
\begin{equation}
\label{eq:bratu_substituted}
\begin{cases}
\sum_{p=1}^{P}{\beta_p^{(k)} \bigl( \Delta \phi_p + \lambda e^{\hat{u}^{(k-1)}} \phi_p \bigr) } = \lambda e^{\hat{u}^{(k-1)}} \bigl( \hat{u}^{(k-1)} - 1 \bigr), & \mathbf{x} \in \Omega, \\[4pt]
\sum_{p=1}^{P}{ \beta_p^{(k)} \phi_p(\mathbf{x})} = u_b\left(\mathbf{x}\right), & \mathbf{x} \in \partial\Omega,
\end{cases}, \qquad k=1,2,\ldots.
\end{equation}
Evaluating~\eqref{eq:bratu_substituted} at the collocation points $\mathbf{x}\in\mathbb{R}^N$ produces the rectangular linear algebraic system, 
\[ \mathbf{A}^{(k-1)} \boldsymbol{\beta}^{(k)} = \mathbf{f}^{(k-1)},\] 
where the (typically dense) coefficient matrix $\mathbf{A}^{(k-1)}\in \mathbb{R}^{N \times P}$ depends on the previous iterate with elements defined as,
\begin{equation}
\label{eq:bratu_matrix}
A_{ip}^{(k-1)} = \begin{cases}
\Delta \phi_p(\mathbf{x}_i) + \lambda e^{\hat{u}^{(k-1)}(\mathbf{x}_i)} \phi_p(\mathbf{x}_i) & \mathbf{x}_i \in \Omega \\
\phi_p(\mathbf{x}_i) & \mathbf{x}_i \in \partial \Omega
\end{cases}, \qquad p=1,\ldots,P, \; i=1,\ldots,N.
\end{equation}
The right hand side is defined as,
\begin{equation}
\label{eq:bratu_rhs}
f_i^{(k-1)} = \begin{cases}
\lambda e^{\hat{u}^{(k-1)}(\mathbf{x}_i)} \bigl( \hat{u}^{(k-1)}(\mathbf{x}_i) - 1 \bigr) & \mathbf{x}_i \in \Omega \\
u_b\left(\mathbf{x}_i\right) & \mathbf{x}_i \in \partial \Omega
\end{cases}, \qquad i=1,\ldots,N.
\end{equation}
So, starting with an initial guess $\boldsymbol{\beta}^{(0)}$, the process solves the sequence of problems,
\begin{equation*}
\boldsymbol{\beta}^{(k)} = \arg\min_{\boldsymbol{\beta} \in \mathbb{R}^P } \left( \mathbf{A}^{(k-1)} \boldsymbol{\beta} - \mathbf{f}^{(k-1)} \right)^T 
\left( \mathbf{A}^{(k-1)} \boldsymbol{\beta} - \mathbf{f}^{(k-1)} \right), \qquad k=1,2,\ldots.
\end{equation*}
The linear least squares normal equations provide a pathway for direct solution~\cite{dlls}, i.e,
\[ {\mathbf{A}^{(k-1)}}^T \mathbf{A}^{(k-1)} \boldsymbol{\beta}^k = {\mathbf{A}^{(k-1)}}^T \mathbf{f}^{(k-1)}. \]
Alternatively, one may opt for a preconditioned iterative method~\cite{itlls}.

\subsection{Properties of the LiL-Q Iteration}
\label{sec:properties}

As noted in Section~\ref{sec:linear_ls} the linearized residual $\mathbf{R}_{\mathrm{lin}}^{(k)}$ at each outer iteration is generally nonzero and its magnitude influences the convergence behavior of the overall scheme.
This subsection formalizes that relationship and quantifies the computational cost of the iteration.
The main results establish that the \textsc{LiL-Q} iteration converges to a stationary point at which the nonlinear PDE residual is controlled by the approximation capacity of the chosen basis. When the basis cannot represent the linearized right-hand side exactly ($\varepsilon_{L} > 0$), the residual is bounded away from zero by an amount proportional to~$\varepsilon_{L}$, and when the exact solution lies in the span of the basis ($\varepsilon_{L} = 0$), the method recovers it to machine precision.

\subsubsection{Setting and assumptions}
\label{sec:setting}

Let $X$ and $Y$ be Banach spaces and let $\mathcal{N} \colon D(\mathcal{N}) \subset X \to Y$ be the nonlinear differential operator from~\eqref{eq:general_pde}, with an isolated root $u^{*} \in D(\mathcal{N})$ satisfying $\mathcal{N}(u^{*}) = 0$.
The \textsc{LiL-Q} iteration generates a sequence $\{u^{(k)}\}_{k \geq 0}$ by solving the quasilinearized subproblem~\eqref{eq:quasilinear_subproblem} in the least-squares sense over the finite-dimensional subspace $V_{P} = \mathrm{span}\{\phi_{1}, \ldots, \phi_{P}\} \subset X$. To connect this iteration to the classical Newton-Kantorovich framework, we rewrite~\eqref{eq:quasilinear_subproblem} in terms of the increment $\delta u^{(k)} = u^{(k+1)} - u^{(k)}$.
Since $\mathcal{N}'(u^{(k)})$ is a linear operator, we obtain
\begin{equation}
\label{eq:projected_newton}
\mathcal{N}'(u^{(k)})(\delta u^{(k)}) + \mathcal{N}(u^{(k)}) =: r^{(k)},
\end{equation}
where $r^{(k)} \in Y$ is the residual remaining after the least-squares solve.
When evaluated at the collocation points, this quantity corresponds to the discrete linearized residual $\mathbf{R}_{\mathrm{lin}}^{(k)} = \mathbf{A}^{(k)}\boldsymbol{\beta}^{(k+1)} - \mathbf{f}^{(k)}$ introduced in Section~\ref{sec:linear_ls}.
Because the solution is constrained to the finite-dimensional subspace~$V_{P}$, the linearized equation cannot in general be satisfied exactly, and the residual $r^{(k)}$ is bounded away from zero by an amount reflecting the best-approximation error of the basis. Fix an open neighborhood $U \subset D(\mathcal{N})$ containing $u^{*}$.
The following regularity conditions are imposed on $\mathcal{N}$ and on the iteration.

\begin{enumerate}[label=\textbf{(A\arabic*)},leftmargin=2.5em]
\item \label{assump:smooth} \textit{Smoothness.}
$\mathcal{N}$ is continuously Fr\'{e}chet differentiable on $U$, i.e., $\mathcal{N} \in C^{1}(U; Y)$.

\item \label{assump:lipschitz} \textit{Lipschitz continuity of the derivative.}
There exists a constant $L > 0$ such that,
\begin{equation}
\label{eq:lipschitz}
\|\mathcal{N}'(u) - \mathcal{N}'(v)\|_{\mathcal{L}(X,Y)} \leq L\,\|u - v\|_{X}
\quad \text{for all } u,\, v \in U.
\end{equation}

\item \label{assump:invertible} \textit{Uniform invertability.}
For all $u \in U$, the linearized operator $\mathcal{N}'(u) \colon X \to Y$ is bijective with
\begin{equation}
\label{eq:invertibility}
\|\mathcal{N}'(u)^{-1}\|_{\mathcal{L}(Y,X)} \leq \Lambda
\quad \text{for some } \Lambda > 0.
\end{equation}

\item \label{assump:bounded} \textit{Uniform boundedness.}
There exists a $\Gamma > 0$ such that
\begin{equation}
\label{eq:bounded}
\|\mathcal{N}'(u)\|_{\mathcal{L}(X,Y)} \leq \Gamma
\quad \text{for all } u \in U.
\end{equation}

\item \label{assump:containment} \textit{Iterate containment.}
The iterates remain in $U$: $u^{(k)} \in U$ for all $k \geq 0$.
\end{enumerate}

Assumptions~\ref{assump:smooth} and~\ref{assump:lipschitz} are standard in Newton-type convergence theory~\cite{Deuflhard2011}. The Lipschitz condition~\ref{assump:lipschitz} enters the analysis through the error bounds of Theorem~\ref{thm:error_bounds}. Assumption~\ref{assump:invertible} ensures that the continuous linearized problems are well-posed and it is needed only for the error bounds in Theorem~\ref{thm:error_bounds}. Assumption~\ref{assump:bounded} provides a uniform bound on the forward operator and is needed for all three main results. Finally, assumption~\ref{assump:containment} can be verified a posteriori or ensured by globalization strategies. Under the \textit{smallness condition} of  Theorem~\ref{thm:nk_convergence} it holds automatically and the iterates remain in a ball about~$u^{*}$ as a consequence of local convergence rather than as a standing hypothesis.

\subsubsection{Stationarity and residual bounds}
\label{sec:stationarity_bounds}

The first result establishes that when the projection residual is bounded away from zero, the \textsc{LiL-Q} iteration cannot converge to the exact root.

\begin{theorem}
\label{thm:stationary_residual}
Suppose~\ref{assump:smooth},~\ref{assump:bounded}, and~\ref{assump:containment} hold and that $\|r^{(k)}\|_{Y} \geq \varepsilon_{L} > 0$ for all $k \geq 0$.
If the sequence $\{u^{(k)}\}$ converges to a limit $\hat{u} \in U$, then
\begin{equation}
\label{eq:stationary_result}
\|\mathcal{N}(\hat{u})\|_{Y} = \lim_{k \to \infty} \|r^{(k)}\|_{Y} \geq \varepsilon_{L} > 0.
\end{equation}
In particular, $\hat{u} \neq u^{*}$.
\end{theorem}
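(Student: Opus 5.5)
The plan is to pass to the limit in the defining identity \eqref{eq:projected_newton}, $r^{(k)} = \mathcal{N}'(u^{(k)})(\delta u^{(k)}) + \mathcal{N}(u^{(k)})$, with $\delta u^{(k)} = u^{(k+1)} - u^{(k)}$. If $u^{(k)} \to \hat{u}$ in $X$, then the increments vanish: $\|\delta u^{(k)}\|_X \le \|u^{(k+1)}-\hat u\|_X + \|u^{(k)}-\hat u\|_X \to 0$. Assumption~\ref{assump:containment} keeps every $u^{(k)}$ in $U$, so \ref{assump:bounded} gives
\[
\|\mathcal{N}'(u^{(k)})(\delta u^{(k)})\|_Y \le \Gamma\,\|\delta u^{(k)}\|_X \to 0 .
\]
This linear term therefore disappears in the limit without any control on the modulus of continuity of $\mathcal{N}'$.

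Next, since $\hat u \in U$ and $\mathcal{N}\in C^1(U;Y)$ by \ref{assump:smooth}, $\mathcal{N}$ is in particular continuous at $\hat u$. Hence $\mathcal{N}(u^{(k)}) \to \mathcal{N}(\hat u)$ in $Y$. Combining the two limits, $r^{(k)} \to \mathcal{N}(\hat u)$ in $Y$. Continuity of the norm then shows that $\lim_k \|r^{(k)}\|_Y$ exists and equals $\|\mathcal{N}(\hat u)\|_Y$, which is the equality in \eqref{eq:stationary_result}.

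The inequality follows because $\|r^{(k)}\|_Y \ge \varepsilon_L$ for every $k$, and a non-strict lower bound is preserved under limits. So $\|\mathcal{N}(\hat u)\|_Y \ge \varepsilon_L > 0$. In particular $\mathcal{N}(\hat u)\neq 0 = \mathcal{N}(u^*)$, which gives $\hat u \neq u^*$.

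There is no serious obstacle; the argument is a continuity argument. The one point needing care is the ordering of the logic. One must first show that the limit of $\|r^{(k)}\|_Y$ exists, which comes from convergence of $r^{(k)}$ itself, before transferring the uniform lower bound to it. One must also make sure the uniform bound $\Gamma$ is applied only at points of $U$, which is what \ref{assump:containment} guarantees. If $\mathcal{N}'$ were not uniformly bounded, one would instead invoke continuity of $\mathcal{N}'$ at $\hat u$ from \ref{assump:smooth} to bound $\|\mathcal{N}'(u^{(k)})\|$ eventually. With \ref{assump:bounded} this detour is unnecessary.
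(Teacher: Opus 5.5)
Your proposal is correct and follows essentially the same argument as the paper: rearrange \eqref{eq:projected_newton}, bound the linear term by $\Gamma\|\delta u^{(k)}\|_X \to 0$ using \ref{assump:bounded}, use continuity of $\mathcal{N}$ from \ref{assump:smooth} to obtain $r^{(k)} \to \mathcal{N}(\hat u)$, and pass the lower bound $\varepsilon_L$ to the limit. Your remarks on the existence of $\lim_k \|r^{(k)}\|_Y$, on applying $\Gamma$ only at points of $U$ via \ref{assump:containment}, and on deducing $\hat u \neq u^*$ from $\mathcal{N}(\hat u) \neq 0$ spell out steps the paper leaves implicit.
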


\begin{proof}
Since $u^{(k)} \to \hat{u}$, the increments satisfy $\delta u^{(k)} \to 0$.
Rearranging~\eqref{eq:projected_newton} gives
\begin{equation}
\label{eq:thm1_rearrange}
\mathcal{N}(u^{(k)}) = r^{(k)} - \mathcal{N}'(u^{(k)})(\delta u^{(k)}).
\end{equation}
Applying~\ref{assump:bounded} to the second term on the right-hand side yields
\begin{equation}
\label{eq:thm1_vanish}
\|\mathcal{N}'(u^{(k)})(\delta u^{(k)})\|_{Y} \;\leq\; \Gamma\,\|\delta u^{(k)}\|_{X} \;\to\; 0
\quad \text{as } k \to \infty.
\end{equation}
By continuity of $\mathcal{N}$~\ref{assump:smooth}, the left-hand side of~\eqref{eq:thm1_rearrange} converges to $\mathcal{N}(\hat{u})$, so
\begin{equation}
\label{eq:thm1_limit}
\mathcal{N}(\hat{u}) = \lim_{k \to \infty} r^{(k)}.
\end{equation}
The lower bound $\|r^{(k)}\|_{Y} \geq \varepsilon_{L}$ passes to the limit, giving $\|\mathcal{N}(\hat{u})\|_{Y} \geq \varepsilon_{L} > 0$.
\end{proof}

Theorem~\ref{thm:stationary_residual} demonstrates that the limiting nonlinear residual is constrained by the accuracy of the linearized subproblems.
Because $r^{(k)}$ is the linearized residual minimized at each iteration, the approximation error of the basis, manifested through~$\varepsilon_{L}$, propagates directly to the nonlinear PDE residual at convergence.
When $\varepsilon_{L} > 0$, this residual floor represents an accuracy limit that cannot be overcome by further iterations. Reducing it requires enriching the subspace~$V_{P}$, and hence, a different approximator. In the limiting case $\varepsilon_{L} = 0$, the hypothesis of Theorem~\ref{thm:stationary_residual} is not satisfied, the floor vanishes, and the iteration can converge to the exact root~$u^{*}$. The linear elasticity experiment in Section~\ref{sec:elasticity} illustrates this scenario.

The second result provides two-sided bounds on the nonlinear residual at any iteration, and not just at the limit.

\begin{theorem}
\label{thm:residual_bounds}
Suppose~\ref{assump:smooth},~\ref{assump:bounded}, and~\ref{assump:containment} hold and that the projection residual satisfies $0 < \varepsilon_{L} \leq \|r^{(k)}\|_{Y} \leq \varepsilon_{U}$ for all $k \geq 0$.
Then
\begin{equation}
\label{eq:residual_two_sided}
\varepsilon_{L} - \Gamma\,\|\delta u^{(k)}\|_{X}
\;\leq\; \|\mathcal{N}(u^{(k)})\|_{Y}
\;\leq\; \varepsilon_{U} + \Gamma\,\|\delta u^{(k)}\|_{X}.
\end{equation}
If the sequence converges so that $\delta u^{(k)} \to 0$, the limiting nonlinear residual satisfies $\varepsilon_{L} \leq \|\mathcal{N}(\hat{u})\|_{Y} \leq \varepsilon_{U}$.
\end{theorem}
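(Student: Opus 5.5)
The plan is to reuse the rearrangement from the proof of Theorem~\ref{thm:stationary_residual}, now at each fixed iteration $k$ rather than in the limit, and then apply the triangle and reverse triangle inequalities. Starting from the defining identity~\eqref{eq:projected_newton}, I will write, exactly as in~\eqref{eq:thm1_rearrange},
\begin{equation*}
\mathcal{N}(u^{(k)}) = r^{(k)} - \mathcal{N}'(u^{(k)})(\delta u^{(k)}).
\end{equation*}
This is meaningful because~\ref{assump:containment} places $u^{(k)}$ in $U$, and~\ref{assump:smooth} guarantees that $\mathcal{N}'(u^{(k)})$ exists there as a bounded linear operator.

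The key intermediate bound comes from~\ref{assump:bounded}: $\|\mathcal{N}'(u^{(k)})(\delta u^{(k)})\|_{Y} \leq \Gamma\,\|\delta u^{(k)}\|_{X}$. For the upper estimate, I apply the triangle inequality to the identity above, which gives $\|\mathcal{N}(u^{(k)})\|_{Y} \leq \|r^{(k)}\|_{Y} + \Gamma\|\delta u^{(k)}\|_{X} \leq \varepsilon_{U} + \Gamma\|\delta u^{(k)}\|_{X}$. For the lower estimate, I use the reverse triangle inequality $\|a - b\| \geq \|a\| - \|b\|$ with $a = r^{(k)}$ and $b = \mathcal{N}'(u^{(k)})(\delta u^{(k)})$. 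This yields $\|\mathcal{N}(u^{(k)})\|_{Y} \geq \|r^{(k)}\|_{Y} - \Gamma\|\delta u^{(k)}\|_{X} \geq \varepsilon_{L} - \Gamma\|\delta u^{(k)}\|_{X}$. Together these establish~\eqref{eq:residual_two_sided}. The lower bound may be negative, in which case it holds trivially.

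For the limiting statement, I take the hypothesis to mean that $u^{(k)} \to \hat{u} \in U$, so that $\delta u^{(k)} \to 0$. Continuity of $\mathcal{N}$ from~\ref{assump:smooth} then gives $\|\mathcal{N}(u^{(k)})\|_{Y} \to \|\mathcal{N}(\hat{u})\|_{Y}$. Passing to the limit in both sides of~\eqref{eq:residual_two_sided}, the $\Gamma\|\delta u^{(k)}\|_{X}$ terms vanish, which yields $\varepsilon_{L} \leq \|\mathcal{N}(\hat{u})\|_{Y} \leq \varepsilon_{U}$.

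None of these steps is difficult. The only point needing care is the interpretation of ``converges'': $\delta u^{(k)} \to 0$ alone does not ensure that the iterates have a limit. I will therefore make explicit that a limit $\hat{u} \in U$ is assumed, as in Theorem~\ref{thm:stationary_residual}, so that continuity can be invoked. An alternative that avoids identifying $\hat{u}$ is to state the conclusion as bounds on $\liminf$ and $\limsup$ of $\|\mathcal{N}(u^{(k)})\|_{Y}$, but I do not need that here.
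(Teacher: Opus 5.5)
Your proposal is correct and matches the paper's proof: rearrange \eqref{eq:projected_newton}, bound $\mathcal{N}'(u^{(k)})(\delta u^{(k)})$ by $\Gamma\|\delta u^{(k)}\|_{X}$ via~\ref{assump:bounded}, apply the triangle and reverse triangle inequalities, and let $k \to \infty$. You also require that the iterates converge to some $\hat{u} \in U$, not merely that $\delta u^{(k)} \to 0$, so that continuity from~\ref{assump:smooth} applies; this states explicitly an assumption that the paper's one-line limit step leaves implicit.
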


\begin{proof}
From~\eqref{eq:projected_newton},
\begin{equation}
\label{eq:thm2_identity}
\mathcal{N}(u^{(k)}) = r^{(k)} - \mathcal{N}'(u^{(k)})(\delta u^{(k)}).
\end{equation}

\noindent\textit{Upper bound.}
The triangle inequality and~\ref{assump:bounded} give
\begin{align}
\|\mathcal{N}(u^{(k)})\|_{Y}
&\leq \|r^{(k)}\|_{Y} + \|\mathcal{N}'(u^{(k)})(\delta u^{(k)})\|_{Y} \notag \\
&\leq \|r^{(k)}\|_{Y} + \Gamma\,\|\delta u^{(k)}\|_{X} \notag \\
&\leq \varepsilon_{U} + \Gamma\,\|\delta u^{(k)}\|_{X}.
\label{eq:thm2_upper}
\end{align}

\noindent\textit{Lower bound.}
The reverse triangle inequality gives
\begin{align}
\|\mathcal{N}(u^{(k)})\|_{Y}
&\geq \|r^{(k)}\|_{Y} - \|\mathcal{N}'(u^{(k)})(\delta u^{(k)})\|_{Y} \notag \\
&\geq \|r^{(k)}\|_{Y} - \Gamma\,\|\delta u^{(k)}\|_{X} \notag \\
&\geq \varepsilon_{L} - \Gamma\,\|\delta u^{(k)}\|_{X}.
\label{eq:thm2_lower}
\end{align}
The asymptotic bounds follow by taking $k \to \infty$.
\end{proof}

During early iterations when the increments are large, the bounds~\eqref{eq:residual_two_sided} may be loose. As the iteration approaches stationarity and the increments contract, the bounds tighten around (sandwich) the true nonlinear residual. A practical consequence is that the linearized residual $\mathbf{R}_{\mathrm{lin}}^{(k)}$ serves as a computationally inexpensive proxy for the true PDE residual. When this quantity stagnates, the nonlinear residual has also stagnated within the band $[\varepsilon_{L},\, \varepsilon_{U}]$, and further iterations cannot improve the solution. Basis enrichment is required for a more accurate result.

\subsubsection{Error bounds}
\label{sec:error_bounds}

The residual bounds of Theorem~\ref{thm:residual_bounds} constrain the nonlinear residual in the codomain~$Y$.
The following result bounds the iteration error $e^{(k)} = u^{(k)} - u^{*}$ in the domain~$X$, establishing that the iterates are confined to an annular region around~$u^{*}$.
We first record an identity relating the error at iteration $k+1$ to the projection residual and the nonlinear remainder.

\begin{lemma}
\label{lem:error_identity}
Let $e^{(k)} = u^{(k)} - u^{*}$ and write
\begin{equation}
\label{eq:taylor_expansion}
\mathcal{N}(u^{(k)}) = \mathcal{N}'(u^{(k)})(e^{(k)}) + w^{(k)}.
\end{equation}
Under~\ref{assump:smooth} and~\ref{assump:lipschitz}, the remainder obeys
\begin{equation}
\label{eq:remainder_bound}
\|w^{(k)}\|_{Y} \leq \frac{L}{2}\,\|e^{(k)}\|_{X}^{2},
\end{equation}
and the iteration error satisfies
\begin{equation}
\label{eq:error_identity}
\mathcal{N}'(u^{(k)})(e^{(k+1)}) = r^{(k)} - w^{(k)}.
\end{equation}
\end{lemma}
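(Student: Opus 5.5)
The lemma has two parts. The identity \eqref{eq:error_identity} is pure algebra from the definition of $r^{(k)}$. The remainder bound \eqref{eq:remainder_bound} is a first-order Taylor estimate with integral remainder, driven by the Lipschitz condition~\ref{assump:lipschitz}. I would prove the bound first, since it is the only analytic step.

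\emph{Remainder bound.} Because $\mathcal{N}(u^{*})=0$, the definition \eqref{eq:taylor_expansion} gives
\[
w^{(k)} = \mathcal{N}(u^{(k)}) - \mathcal{N}(u^{*}) - \mathcal{N}'(u^{(k)})(u^{(k)}-u^{*}).
\]
Assumption~\ref{assump:smooth} makes $t\mapsto \mathcal{N}'(u^{*}+t e^{(k)})$ continuous. Assuming the segment $\{u^{*}+t e^{(k)}: t\in[0,1]\}$ lies in $U$, the fundamental theorem of calculus for $Y$-valued (Riemann) integrals gives
\[
\mathcal{N}(u^{(k)})-\mathcal{N}(u^{*})=\int_0^1 \mathcal{N}'(u^{*}+t e^{(k)})(e^{(k)})\,dt .
\]
Subtracting $\mathcal{N}'(u^{(k)})(e^{(k)}) = \int_0^1 \mathcal{N}'(u^{(k)})(e^{(k)})\,dt$, we obtain
\[
w^{(k)} = \int_0^1 \bigl[\mathcal{N}'(u^{*}+t e^{(k)}) - \mathcal{N}'(u^{(k)})\bigr](e^{(k)})\,dt .
\]
The two arguments differ by $u^{(k)} - (u^{*}+te^{(k)}) = (1-t)e^{(k)}$. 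Applying~\ref{assump:lipschitz} inside the integral then yields
\[
\|w^{(k)}\|_Y \le \int_0^1 L(1-t)\|e^{(k)}\|_X^2\,dt = \tfrac{L}{2}\|e^{(k)}\|_X^2 .
\]
If one prefers to avoid vector-valued integration, the same bound follows by applying a norming functional $y^{*}\in Y^{*}$ with $\|y^{*}\|=1$ and $y^{*}(w^{(k)})=\|w^{(k)}\|_Y$ (Hahn–Banach). One then uses the scalar fundamental theorem of calculus on $t\mapsto y^{*}(\mathcal{N}(u^{*}+te^{(k)}))$.

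\emph{Error identity.} Write $e^{(k+1)} = e^{(k)} + \delta u^{(k)}$ and use linearity of $\mathcal{N}'(u^{(k)})$:
\[
\mathcal{N}'(u^{(k)})(e^{(k+1)}) = \mathcal{N}'(u^{(k)})(e^{(k)}) + \mathcal{N}'(u^{(k)})(\delta u^{(k)}).
\]
By \eqref{eq:taylor_expansion} the first term equals $\mathcal{N}(u^{(k)}) - w^{(k)}$. By \eqref{eq:projected_newton} the second term equals $r^{(k)} - \mathcal{N}(u^{(k)})$. Summing, the $\mathcal{N}(u^{(k)})$ terms cancel and leave $r^{(k)} - w^{(k)}$, which is \eqref{eq:error_identity}.

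\emph{Main obstacle.} The only delicate point is geometric. The integral representation requires the whole segment from $u^{*}$ to $u^{(k)}$ to lie in $U$, where~\ref{assump:lipschitz} holds, whereas~\ref{assump:containment} only guarantees $u^{(k)}\in U$. I would handle this by taking $U$ convex, for instance an open ball about $u^{*}$, which is the natural choice given the local setting. Alternatively, one restricts attention to iterates in such a ball; this is exactly the situation arranged by the smallness condition of Theorem~\ref{thm:nk_convergence}. With that convention stated explicitly, both parts of the lemma follow as above.
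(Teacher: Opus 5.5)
Your proposal is correct and follows the paper's proof essentially line for line: the same integral representation $w^{(k)}=\int_0^1[\mathcal{N}'(u^{*}+te^{(k)})-\mathcal{N}'(u^{(k)})](e^{(k)})\,dt$ bounded with the Lipschitz factor $L(1-t)\|e^{(k)}\|_X$, and the same combination of \eqref{eq:taylor_expansion} with \eqref{eq:projected_newton} via $e^{(k+1)}=e^{(k)}+\delta u^{(k)}$. Your remark that the segment from $u^{*}$ to $u^{(k)}$ must lie in $U$ (for instance by taking $U$ to be a ball about $u^{*}$) is a hypothesis the paper's proof uses without stating it, so making it explicit tightens the argument rather than changing it.
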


\begin{proof}
Equation~\eqref{eq:taylor_expansion} defines $w^{(k)}$.
By the fundamental theorem of calculus and $\mathcal{N}(u^{*}) = 0$,
\begin{equation}
\label{eq:ftc}
\mathcal{N}(u^{(k)}) = \mathcal{N}(u^{(k)}) - \mathcal{N}(u^{*}) = \int_{0}^{1} \mathcal{N}'\bigl(u^{*} + t\,e^{(k)}\bigr)(e^{(k)})\,dt,
\end{equation}
so that
\begin{equation}
\label{eq:remainder_integral}
w^{(k)} = \int_{0}^{1} \bigl[\mathcal{N}'\bigl(u^{*} + t\,e^{(k)}\bigr) - \mathcal{N}'(u^{(k)})\bigr](e^{(k)})\,dt.
\end{equation}
Since $u^{*} + t\,e^{(k)} - u^{(k)} = (t-1)\,e^{(k)}$, assumption~\ref{assump:lipschitz} gives $\|\mathcal{N}'(u^{*} + t\,e^{(k)}) - \mathcal{N}'(u^{(k)})\|_{\mathcal{L}(X,Y)} \leq L\,(1-t)\,\|e^{(k)}\|_{X}$, and therefore
\begin{equation}
\label{eq:remainder_estimate}
\|w^{(k)}\|_{Y} \leq L\,\|e^{(k)}\|_{X}^{2}\int_{0}^{1}(1-t)\,dt = \frac{L}{2}\,\|e^{(k)}\|_{X}^{2},
\end{equation}
which is~\eqref{eq:remainder_bound}.
Substituting~\eqref{eq:taylor_expansion} into the projection equation~\eqref{eq:projected_newton} and using $\delta u^{(k)} + e^{(k)} = e^{(k+1)}$ together with the linearity of $\mathcal{N}'(u^{(k)})$ yields~\eqref{eq:error_identity}.
\end{proof}

Before bounding the iteration error, we make precise the quantity~$\varepsilon_{L}$ that has appeared informally as a lower bound on the projection residual.
Because the right-hand side of the quasilinearized subproblem~\eqref{eq:quasilinear_subproblem} depends on the current iterate through $\mathcal{N}'(u^{(k)})$ and $\mathcal{N}(u^{(k)})$, the quantity that the least-squares solve over $V_{P}$ can attain varies from one outer iteration to the next.
To obtain bounds that hold uniformly along the trajectory, we track the best-approximation residual achieved at each iterate and take its smallest and largest values over the sequence, denoted $\varepsilon_{L}$ and $\varepsilon_{U}$ respectively.

\begin{definition}
\label{def:eps_L}
For a fixed basis spanning $V_{P}$ and a trajectory $\{u^{(k)}\}$ remaining in $U$, define the smallest residual~\eqref{eq:projected_newton} attainable at iteration~$k$ by any element of~$V_{P}$, written through the increment $w - u^{(k)}$ as,
\begin{equation}
\eta^{(k)} \;\coloneqq\; \inf_{w \in V_{P}}\; \bigl\| \mathcal{N}'(u^{(k)})(w - u^{(k)}) + \mathcal{N}(u^{(k)}) \bigr\|_{Y},
\end{equation}
the lowest attained residual at iteration k,
\begin{equation}
\label{eq:eps_L_definition}
\varepsilon_{L}(P) \;\coloneqq\; \inf_{k \geq 0} \eta^{(k)},
\end{equation}
and the largest attained residual at iteration k,
\begin{equation}
\label{eq:eps_U_definition}
\varepsilon_{U}(P) \;\coloneqq\; \sup_{k \geq 0} \eta^{(k)}.
\end{equation}
Since the least-squares solve (discretely) realizes $\|r^{(k)}\|_{Y} = \eta^{(k)}$ at each step, we have $\varepsilon_{L}(P) \leq \|r^{(k)}\|_{Y} \leq \varepsilon_{U}(P)$ for all~$k$.
\end{definition}

Note that the supremum $\varepsilon_{U}$ is finite whenever the trajectory remains in a bounded region of~$U$, a property guaranteed by the convergence result established next. Next we develop bounds on the iterate error itself.

\begin{theorem}
\label{thm:error_bounds}
Suppose~\ref{assump:smooth}--\ref{assump:containment} hold and that $0 < \varepsilon_{L} \leq \|r^{(k)}\|_{Y} \leq \varepsilon_{U}$ for all $k \geq 0$.
Then
\begin{equation}
\label{eq:error_two_sided}
\frac{1}{\Gamma}\Bigl(\varepsilon_{L} - \frac{L}{2}\,\|e^{(k)}\|_{X}^{2}\Bigr)
\;\leq\; \|e^{(k+1)}\|_{X}
\;\leq\; \Lambda\Bigl(\varepsilon_{U} + \frac{L}{2}\,\|e^{(k)}\|_{X}^{2}\Bigr).
\end{equation}
The lower bound is informative once the iterate is close enough that $\tfrac{L}{2}\,\|e^{(k)}\|_{X}^{2} < \varepsilon_{L}$.
When the quadratic remainder terms are negligible, the error is asymptotically confined to the annular region
\begin{equation}
\label{eq:error_annulus}
\frac{\varepsilon_{L}}{\Gamma}
\;\leq\; \|e^{(k+1)}\|_{X}
\;\leq\; \Lambda\,\varepsilon_{U}.
\end{equation}
\end{theorem}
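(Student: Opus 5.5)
The plan is to start from the error identity of Lemma~\ref{lem:error_identity},
\[
\mathcal{N}'(u^{(k)})(e^{(k+1)}) = r^{(k)} - w^{(k)},
\]
and extract both bounds from it. For the upper bound I would use the inverse; for the lower bound, the forward operator. Throughout, the remainder is controlled by \eqref{eq:remainder_bound}, namely $\|w^{(k)}\|_Y \le \tfrac{L}{2}\|e^{(k)}\|_X^2$, which requires \ref{assump:smooth}--\ref{assump:lipschitz}. Assumption~\ref{assump:containment} ensures $u^{(k)}\in U$, so \ref{assump:invertible} and \ref{assump:bounded} apply at every iterate. I would also check that the segment $u^*+te^{(k)}$ used inside Lemma~\ref{lem:error_identity} lies in $U$, by taking $U$ convex (for instance a ball about $u^*$).

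\textbf{Upper bound.} Since $\mathcal{N}'(u^{(k)})$ is bijective by \ref{assump:invertible}, we can write
\[
e^{(k+1)} = \mathcal{N}'(u^{(k)})^{-1}\bigl(r^{(k)} - w^{(k)}\bigr).
\]
Then
\[
\|e^{(k+1)}\|_X \le \Lambda\bigl(\|r^{(k)}\|_Y + \|w^{(k)}\|_Y\bigr) \le \Lambda\Bigl(\varepsilon_U + \tfrac{L}{2}\|e^{(k)}\|_X^2\Bigr),
\]
using the triangle inequality, the hypothesis $\|r^{(k)}\|_Y\le\varepsilon_U$, and \eqref{eq:remainder_bound}.

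\textbf{Lower bound.} From \ref{assump:bounded},
\[
\|r^{(k)} - w^{(k)}\|_Y = \|\mathcal{N}'(u^{(k)})(e^{(k+1)})\|_Y \le \Gamma\|e^{(k+1)}\|_X.
\]
The reverse triangle inequality then gives
\[
\|r^{(k)}-w^{(k)}\|_Y \ge \|r^{(k)}\|_Y - \|w^{(k)}\|_Y \ge \varepsilon_L - \tfrac{L}{2}\|e^{(k)}\|_X^2.
\]
Dividing by $\Gamma$ yields the left inequality of \eqref{eq:error_two_sided}. This bound is trivially true, though uninformative, when the right-hand side is negative. It carries content exactly when $\tfrac{L}{2}\|e^{(k)}\|_X^2<\varepsilon_L$, which is the remark in the statement.

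\textbf{Annulus.} Dropping the quadratic terms, treated as negligible as the statement permits, gives \eqref{eq:error_annulus} directly. For a more precise version I would note that the upper bound makes $\|e^{(k)}\|_X$ eventually $O(\Lambda\varepsilon_U)$ once the iteration contracts. The remainder is then $O(L\Lambda^2\varepsilon_U^2)$, which is second order in the residual level, so the annulus holds up to higher-order corrections.

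The only delicate point is this interpretive step: the statement asserts confinement only under the negligibility proviso, so no rigorous limit argument is required. The two main inequalities are routine once Lemma~\ref{lem:error_identity} is in hand.
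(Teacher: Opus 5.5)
Your proof is correct and follows the paper's argument essentially step for step. You get the upper bound by applying $\mathcal{N}'(u^{(k)})^{-1}$ to the error identity of Lemma~\ref{lem:error_identity} with \ref{assump:invertible}, the lower bound from the reverse triangle inequality and the forward bound \ref{assump:bounded}, and the annulus by dropping the quadratic remainder. Your additional check that the segment $u^{*}+t\,e^{(k)}$ lies in $U$ (e.g., by taking $U$ convex) addresses a point that the paper's Lemma uses implicitly without stating it.
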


\begin{proof}
Both bounds follow from the error identity~\eqref{eq:error_identity}.

\noindent\textit{Upper bound.}
Applying $\mathcal{N}'(u^{(k)})^{-1}$ to both sides of~\eqref{eq:error_identity} and taking norms gives
\begin{align}
\|e^{(k+1)}\|_{X}
&= \|\mathcal{N}'(u^{(k)})^{-1}\bigl(r^{(k)} - w^{(k)}\bigr)\|_{X} \notag \\
&\leq \|\mathcal{N}'(u^{(k)})^{-1}\|_{\mathcal{L}(Y,X)}\bigl(\|r^{(k)}\|_{Y} + \|w^{(k)}\|_{Y}\bigr) \notag \\
&\leq \Lambda\Bigl(\varepsilon_{U} + \frac{L}{2}\,\|e^{(k)}\|_{X}^{2}\Bigr),
\label{eq:thm3_upper}
\end{align}
where the last inequality uses~\ref{assump:invertible} and Lemma~\ref{lem:error_identity}.

\noindent\textit{Lower bound.}
Taking norms on~\eqref{eq:error_identity} and applying the reverse triangle inequality gives
\begin{align}
\|\mathcal{N}'(u^{(k)})(e^{(k+1)})\|_{Y}
&\geq \|r^{(k)}\|_{Y} - \|w^{(k)}\|_{Y} \notag \\
&\geq \varepsilon_{L} - \frac{L}{2}\,\|e^{(k)}\|_{X}^{2}.
\label{eq:thm3_lower_step1}
\end{align}
Since $\|\mathcal{N}'(u^{(k)})(e^{(k+1)})\|_{Y} \leq \Gamma\,\|e^{(k+1)}\|_{X}$ by~\ref{assump:bounded}, dividing by $\Gamma$ yields
\begin{equation}
\label{eq:thm3_lower}
\|e^{(k+1)}\|_{X}
\;\geq\; \frac{1}{\Gamma}\Bigl(\varepsilon_{L} - \frac{L}{2}\,\|e^{(k)}\|_{X}^{2}\Bigr).
\end{equation}
The asymptotic annulus~\eqref{eq:error_annulus} is obtained by evaluating the bounds when $\frac{L}{2}\|e^{(k)}\|_{X}^{2}$ is negligible relative to $\varepsilon_{L}$ and~$\varepsilon_{U}$.
\end{proof}

The upper and lower bounds in Theorem~\ref{thm:error_bounds} rely on different operator constants. While the upper bound uses the invertibility constant~$\Lambda$ from~\ref{assump:invertible}, the lower bound uses the forward operator bound~$\Gamma$ from~\ref{assump:bounded}. In general $\Lambda \neq 1/\Gamma$, so the annulus has nontrivial width. In the discrete formulation (collocation system), these constants are related to singular value bounds on the collocation matrix. That is, $\Gamma$ corresponds to $\sigma_{\max}(\mathbf{A}^{(k)})$ and $\Lambda$ corresponds to $1/\sigma_{\min}(\mathbf{A}^{(k)})$, and the width of the annulus is governed by the condition number $\kappa(\mathbf{A}^{(k)})$.

\subsubsection{Local convergence of the iteration}
\label{sec:local_convergence}

Theorems~\ref{thm:stationary_residual} and~\ref{thm:residual_bounds} characterize the behavior of the iteration \emph{under the hypothesis} that the sequence $\{u^{(k)}\}$ converges, and Assumption~\ref{assump:containment} likewise presumes that the iterates remain in~$U$.
We now remove these presumptions so that under the smoothness and regularity conditions~\ref{assump:smooth}-\ref{assump:bounded} together with a \textit{smallness condition} relating the operator constants to the subspace accuracy, the iteration converges locally and the iterates remain in~$U$ automatically.
The argument is an inexact Newton-Kantorovich analysis~\cite{Deuflhard2011,Kantorovich1982,DemboEisenstatSteihaug1982} in which the inexactness is precisely the projection residual~$r^{(k)}$ left by the finite-dimensional least-squares solve.

\begin{theorem}
\label{thm:nk_convergence}
Suppose~\ref{assump:smooth}--\ref{assump:bounded} hold on a ball $B(u^{*}, \rho) \subset U$, that the projection residual satisfies $\|r^{(k)}\|_{Y} \leq \varepsilon_{U}$ for every iterate in this ball, and that the smallness condition,
\begin{equation}
\label{eq:nk_smallness}
2\Lambda^{2} L\, \varepsilon_{U} \;\leq\; 1
\end{equation}
holds. Let $0 < t_{-} \leq t_{+}$ denote the two roots of the scalar quadratic $\tfrac{\Lambda L}{2} t^{2} - t + \Lambda\varepsilon_{U}$. If the initial error satisfies $\|e^{(0)}\|_{X} < t_{+}$ and $t_{+} \leq \rho$, then every iterate remains in $B(u^{*}, \rho)$, and the error decreases monotonically,
\begin{equation}
\label{eq:nk_monotone}
\|e^{(k+1)}\|_{X} \leq \|e^{(k)}\|_{X}
\end{equation}
 to the stable fixed point,
\begin{equation}
\label{eq:nk_limit}
\lim_{k \to \infty} \|e^{(k)}\|_{X} \;\leq\; t_{-} \;\leq\; 2\Lambda\varepsilon_{U}.
\end{equation}
 
\end{theorem}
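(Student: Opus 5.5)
The argument rests on the one-step error recursion already contained in the upper half of Theorem~\ref{thm:error_bounds}, combined with an induction that keeps the iterates inside the ball. Define the scalar map
\[
g(t) \coloneqq \Lambda\Bigl(\varepsilon_{U} + \tfrac{L}{2}\,t^{2}\Bigr), \qquad t \geq 0 .
\]
Whenever $u^{(k)} \in B(u^{*},\rho)$, Lemma~\ref{lem:error_identity} applies. The upper-bound computation in the proof of Theorem~\ref{thm:error_bounds} then gives $\|e^{(k+1)}\|_{X} \leq g(\|e^{(k)}\|_{X})$. That computation only needs \ref{assump:smooth}--\ref{assump:bounded} and membership of $u^{(k)}$ in the ball, not \ref{assump:containment}. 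Note that the Lipschitz estimate in Lemma~\ref{lem:error_identity} is applied along the segment from $u^{*}$ to $u^{(k)}$. This segment lies in the ball by convexity, so the lemma is legitimately available.

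\textbf{Step 1: analyze the quadratic.} The fixed points of $g$ are exactly the roots of $\tfrac{\Lambda L}{2}t^{2} - t + \Lambda\varepsilon_{U}$. Its discriminant is $1 - 2\Lambda^{2}L\varepsilon_{U}$, which is nonnegative by the smallness condition~\eqref{eq:nk_smallness}. So the roots $t_{\pm} = \bigl(1 \pm \sqrt{1-2\Lambda^{2}L\varepsilon_{U}}\bigr)/(\Lambda L)$ are real and positive. Rationalizing gives
\[
t_{-} = \frac{2\Lambda\varepsilon_{U}}{1+\sqrt{1-2\Lambda^{2}L\varepsilon_{U}}} \leq 2\Lambda\varepsilon_{U},
\]
which is the last inequality in~\eqref{eq:nk_limit}. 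The quadratic is negative strictly between the roots, so $g(t) < t$ on $(t_{-},t_{+})$, and $g(t) \geq t$ on $[0,t_{-}]$. Also, $g$ is increasing on $[0,\infty)$.

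\textbf{Step 2: induction.} The claim is that $\|e^{(k)}\|_{X} < t_{+} \leq \rho$ for all $k$.
\begin{itemize}
\item If $\|e^{(k)}\|_{X} \in (t_{-},t_{+})$, then $\|e^{(k+1)}\|_{X} \leq g(\|e^{(k)}\|_{X}) < \|e^{(k)}\|_{X} < t_{+}$.
\item If $\|e^{(k)}\|_{X} \leq t_{-}$, then monotonicity of $g$ gives $\|e^{(k+1)}\|_{X} \leq g(t_{-}) = t_{-}$.
\end{itemize}
Either way $u^{(k+1)} \in B(u^{*},\rho)$, which closes the induction and yields \ref{assump:containment} automatically. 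The same case split gives the monotone decrease~\eqref{eq:nk_monotone} while the error is above $t_{-}$, and confinement to $[0,t_{-}]$ once it enters. Strictly, inside $[0,t_{-}]$ the bound only gives $\|e^{(k+1)}\|_{X} \leq t_{-}$, not monotone decrease. I would therefore state~\eqref{eq:nk_monotone} in the form "the error decreases monotonically until it enters $[0,t_{-}]$, where it remains." This is the main point to phrase carefully.

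\textbf{Step 3: the limit.} Let $s_{0} = \|e^{(0)}\|_{X}$ and consider the majorant sequence $s_{k+1} = g(s_{k})$. Since $g$ is increasing, induction gives $\|e^{(k)}\|_{X} \leq s_{k}$. If $s_{0} > t_{-}$, the sequence $s_{k}$ is decreasing and bounded below by $t_{-}$. Its limit is a fixed point of the continuous map $g$ lying in $[t_{-},s_{0}] \subset [t_{-},t_{+})$, so it equals $t_{-}$. If $s_{0} \leq t_{-}$, then $s_{k} \leq t_{-}$ for all $k$. In both cases $\limsup_{k}\|e^{(k)}\|_{X} \leq t_{-}$, which is~\eqref{eq:nk_limit}; $t_{-}$ is the attracting fixed point of $g$, since $g'(t_{-}) = \Lambda L t_{-} < 1$.

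\textbf{Main obstacles.} One is the degenerate equality case $2\Lambda^{2}L\varepsilon_{U} = 1$, where $t_{-} = t_{+}$ and the hypothesis $s_{0} < t_{+}$ forces $s_{0} < t_{-}$, so the second case of Step~3 applies. The other is that the theorem says "limit" where only a $\limsup$ is guaranteed. I would interpret it as a $\limsup$, since actual convergence of $\|e^{(k)}\|_{X}$ is not needed for the bound.
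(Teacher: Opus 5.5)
Your proposal is correct and follows essentially the paper's route: the upper bound of Theorem~\ref{thm:error_bounds} gives the quadratic majorant $t \mapsto \tfrac{\Lambda L}{2}t^{2} + \Lambda\varepsilon_{U}$, the smallness condition makes its fixed points $t_{\pm}$ real, an induction keeps the iterates in $B(u^{*},\rho)$, and rationalizing $t_{-}$ gives $t_{-} \leq 2\Lambda\varepsilon_{U}$. Your caveats are also sound and sharpen the paper's argument, which claims the majorant decreases to $t_{-}$ from any start below $t_{+}$ even though it in fact increases to $t_{-}$ from below; so monotone decrease holds only while the error exceeds $t_{-}$, the limit should be read as a $\limsup$, and in the equality case $t_{-}=t_{+}$ you should use strict monotonicity of $g$ to keep $\|e^{(k+1)}\|_{X} < t_{+}$.
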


\begin{proof}
Applying $\mathcal{N}'(u^{(k)})^{-1}$ to the error identity~\eqref{eq:error_identity}, taking the $X$-norm, and using~\ref{assump:invertible},~\ref{assump:lipschitz}, Lemma~\ref{lem:error_identity}, and $\|r^{(k)}\|_{Y} \leq \varepsilon_{U}$ gives, exactly as in the upper bound~\eqref{eq:thm3_upper} of Theorem~\ref{thm:error_bounds},
\begin{equation}
\label{eq:nk_recurrence}
\|e^{(k+1)}\|_{X}
\;\leq\; \Lambda\Bigl(\|r^{(k)}\|_{Y} + \tfrac{L}{2}\|e^{(k)}\|_{X}^{2}\Bigr)
\;\leq\; a\,\|e^{(k)}\|_{X}^{2} + b,
\end{equation}
where,
\[ a \coloneqq \tfrac{\Lambda L}{2}, \]
and,
\[ b \coloneqq \Lambda\varepsilon_{U}.\]
This is a quadratic majorant of the standard Newton-Kantorovich form $t_{k+1} \leq a t_{k}^{2} + b$. The smallness condition~\eqref{eq:nk_smallness} implies exactly that $4ab \leq 1$, under which the majorant map $t \mapsto a t^{2} + b$ has the two fixed points $t_{\pm}$ and, by the classical scalar majorant argument~\cite{Kantorovich1982,Deuflhard2011,DemboEisenstatSteihaug1982}, the sequence majorizing $\|e^{(k)}\|_{X}$ decreases monotonically to $t_{-}$ from any starting value below $t_{+}$.
Consequently $\|e^{(k)}\|_{X} < t_{+} \leq \rho$ for all $k$, so the iterates remain in $B(u^{*},\rho) \subset U$. The limit bound $t_{-} \leq 2\Lambda\varepsilon_{U}$ follows from $t_{-} = (1 - \sqrt{1 - 4ab})/(2a)$ and $1 - \sqrt{1-s} \leq s$ for $s \in [0,1]$.
\end{proof}

Theorem~\ref{thm:nk_convergence} characterizes the convergence behavior that Theorems~\ref{thm:stationary_residual} and~\ref{thm:residual_bounds} had assumed. The iteration contracts quadratically until the error reaches the plateau~$t_{-} \leq 2\Lambda\varepsilon_{U}$, which coincides with the outer radius~$\Lambda\varepsilon_{U}$ of the error annulus~\eqref{eq:error_annulus} established above. The limiting error is therefore squeezed between the basis-imposed floor~$\varepsilon_{L}/\Gamma$ and this plateau, consistent across all three results.
The smallness condition~\eqref{eq:nk_smallness} couples the operator constants $\Lambda$ and $L$ with the inner-solve accuracy~$\varepsilon_{U}$. Subsequently, a sufficiently rich subspace, for which $\varepsilon_{U}$ is small, guarantees local convergence, whereas an impoverished subspace with large~$\varepsilon_{U}$ may violate~\eqref{eq:nk_smallness} and forfeit the guarantee. This dependence explains why the choice of representation, examined empirically in Section~\ref{sec:numerical_experiments}, governs not only the attainable accuracy but the reliability of convergence itself. The single-digit outer-iteration counts observed throughout the experiments are the practical signature of the quadratic phase predicted here.

\begin{remark}
Collocation matrices built from global spectral bases are typically ill-conditioned, and $\kappa(\mathbf{A}^{(k)})$ grows with the number of parameters~$P$.
This growth bears on the round-off contribution to the accuracy rather than on the approximation-theoretic floor. A backward-stable least-squares solve yields an error controlled by $\varepsilon_{L}$ together with a round-off perturbation of order $\kappa(\mathbf{A}^{(k)})\,\epsilon_{\mathrm{mach}}$, where $\epsilon_{\mathrm{mach}}$ is the unit round-off. The attainable accuracy is therefore set by whichever of these two terms is larger. When the collocation matrix is full column rank (the generic case), the least-squares minimizer is unique, and if the exact solution lies in the span of the basis ($\varepsilon_{L}=0$) while $\kappa(\mathbf{A}^{(k)})\,\epsilon_{\mathrm{mach}}$ is small, machine-precision recovery follows. When $\varepsilon_{L}>0$ dominates the round-off term, accuracy is governed by the basis rather than by conditioning. In the extreme case that the matrix becomes numerically rank-deficient, the column-pivoted QR factorization remains well-defined, and its rank-revealing pivoting returns the minimum-norm solution and confines the influence of the trailing singular values that fall below the rank tolerance. The empirical condition numbers, ranks, and their dependence on~$P$ across all benchmarks are examined in Section~\ref{sec:conditioning_results}.
\end{remark}

The three theorems above collectively establish that enriching the basis~$V_{P}$ is the mechanism for improving accuracy. A richer (more representative) basis reduces~$\varepsilon_{L}$, which lowers both the residual floor (Theorem~\ref{thm:residual_bounds}) and the inner radius of the error annulus (Theorem~\ref{thm:error_bounds}), without inflating the number of outer iterations required to reach the floor. Additional outer iterations beyond stationarity cannot improve the solution. These predictions are validated empirically in the residual band plots of Section~\ref{sec:numerical_experiments}.

\subsubsection{Accuracy and the choice of basis}
\label{sec:basis_accuracy}

The preceding results show that the attainable accuracy is governed by~$\varepsilon_{L}(P)$, the best-approximation residual of the subspace.
We now establish how~$\varepsilon_{L}(P)$ behaves as the subspace is enriched, and identify the property of the representation that controls the rate of improvement.
This is the point at which the \textsc{LiL} framework, which admits any representation linear in its trainable parameters, exposes a design choice that is invisible to the convergence analysis but decisive for performance.

\begin{proposition}
\label{prop:monotone}
Let $\{\phi_{p}\}_{p \geq 1}$ generate a nested family of subspaces $V_{1} \subset V_{2} \subset \cdots \subset V_{P} \subset V_{P+1} \subset \cdots$.
Then $\varepsilon_{L}(P+1) \leq \varepsilon_{L}(P)$ for every~$P$.
\end{proposition}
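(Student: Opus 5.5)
The core of the argument is that an infimum over a larger set can only be smaller. Fix an iterate $u$ (any point of $U$) and consider the per-iterate best-approximation residual
\[
\eta_{P}(u) \coloneqq \inf_{w \in V_{P}} \bigl\| \mathcal{N}'(u)(w - u) + \mathcal{N}(u) \bigr\|_{Y}.
\]
Because $V_{P} \subset V_{P+1}$, every candidate $w$ admissible for $\eta_{P}(u)$ is also admissible for $\eta_{P+1}(u)$. The objective functional depends only on $w$ and $u$, not on which subspace contains $w$. Hence $\eta_{P+1}(u) \leq \eta_{P}(u)$ pointwise in $u$. I would state and verify this first, as a trivial set-inclusion lemma.

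The main obstacle is passing from this pointwise statement to $\varepsilon_{L}(P) = \inf_{k} \eta^{(k)}$ in Definition~\ref{def:eps_L}. The trouble is that the trajectory $\{u^{(k)}\}$ itself depends on $P$, so $\varepsilon_{L}(P+1)$ is an infimum along a different sequence of iterates than $\varepsilon_{L}(P)$. Literally, Definition~\ref{def:eps_L} fixes the basis and the trajectory together, so some interpretation is needed. I would handle this by making the dependence explicit. I would interpret $\varepsilon_{L}$ as a functional of the trajectory, $\varepsilon_{L}(P;\{u^{(k)}\}) = \inf_{k} \eta_{P}(u^{(k)})$. The pointwise inequality then immediately gives
\[
\varepsilon_{L}(P+1;\{u^{(k)}\}) \leq \varepsilon_{L}(P;\{u^{(k)}\})
\]
for any common trajectory in $U$. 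That is the comparison the proposition asserts: enrichment cannot raise the attainable floor at any state the iteration visits.

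If a trajectory-independent statement is preferred, I would instead define the uniform quantity $\inf_{u \in \mathcal{K}} \eta_{P}(u)$, or the corresponding supremum, over a fixed set $\mathcal{K} \subset U$, such as the ball $B(u^{*},\rho)$ of Theorem~\ref{thm:nk_convergence} in which all iterates lie. The same inclusion argument then gives monotonicity in $P$, since taking an infimum or supremum over $u \in \mathcal{K}$ preserves the pointwise inequality. I would note in a sentence that this monotonicity transfers to the bound $\varepsilon_{U}$ as well. This matters because it is $\varepsilon_{U}$ that enters the smallness condition~\eqref{eq:nk_smallness} and the plateau $2\Lambda\varepsilon_{U}$.

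Finally, I would remark that nestedness is exactly what is used. For random-feature bases regenerated at each $P$, one does not have $V_{P} \subset V_{P+1}$, and the inclusion step fails. This is the contrast with ELM drawn in Section~\ref{sec:lil_representations}.
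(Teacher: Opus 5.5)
Your proposal is correct and follows the paper's argument. The pointwise inequality $\eta_{P+1}(u)\le\eta_{P}(u)$ follows from the inclusion $V_P\subset V_{P+1}$, and it survives taking $\inf_k$. You also note that the trajectory $\{u^{(k)}\}$ itself depends on $P$, so the comparison holds only along a common trajectory, or uniformly over a fixed set such as $B(u^{*},\rho)$. The paper's proof leaves this implicit when it says ``for each fixed iterate $u^{(k)}$'', and your version is the right way to make the statement precise.
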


\begin{proof}
For each fixed iterate~$u^{(k)}$, the best-approximation residual $\eta^{(k)}$ in~\eqref{eq:eps_L_definition} is an infimum over a larger set when $V_{P}$ is replaced by $V_{P+1} \supseteq V_{P}$, so it cannot increase. Since $\varepsilon_{L}(P) = \inf_{k} \eta^{(k)}$ is monotone in the pointwise values $\eta^{(k)}$, the inequality is preserved.
\end{proof}

Proposition~\ref{prop:monotone} requires only nestedness which is a property shared by orthogonal polynomial families (Chebyshev, Legendre), trigonometric (Fourier) systems, and hierarchical spline bases, for example. It is independent of orthogonality.
It guarantees that enriching such a basis never degrades the attainable accuracy, and through Theorems~\ref{thm:residual_bounds} and~\ref{thm:error_bounds} that the residual floor~$\varepsilon_{L}$ and the inner radius~$\varepsilon_{L}/\Gamma$ of the error annulus are non-increasing in~$P$. This monotonicity is not automatic for arbitrary \textsc{LiL} representations. For instance, a family of randomly sampled features need not be nested as the feature count grows, and the corresponding best-approximation error need not decrease monotonically. This is a point we revisit in the basis-sensitivity study of Section~\ref{sec:burgers}. The \emph{rate} at which $\varepsilon_{L}(P)$ decreases depends on the smoothness of the linearized solution relative to the approximation power of the basis, and here the choice of an orthogonal or trigonometric basis is decisive.

\begin{remark}
When the basis is an orthogonal polynomial or trigonometric (spectral) family and the linearized solution remains smooth along the iteration, classical spectral approximation theory~\cite{CanutoHussainiQuarteroniZang2006,Trefethen2000} gives best-approximation errors that decay rapidly with $P$, i.e.,
\begin{equation}
\label{eq:spectral_rate}
\varepsilon_{L}(P) \;\lesssim\; P^{-s} 
\end{equation}
for solutions in $H^{s}$, and,
\begin{equation}
\label{eq:spectral_rate2}
\varepsilon_{L}(P) \;\lesssim\; \rho^{-P}\ (\rho > 1) 
\end{equation}
for analytic solutions. Subsequently, the residual floor and the inner radius of the error annulus inherit the same algebraic or, in the analytic case, geometric decay.
This rate is contingent on smoothness. When the linearized solution develops steep internal layers such as for advection-dominated problems, the decay degrades, although the monotonicity guaranteed by Proposition~\ref{prop:monotone} still holds.
Randomly sampled features possess neither this spectral rate nor the monotonicity of Proposition~\ref{prop:monotone}.
\end{remark}

\subsubsection{Computational complexity}
\label{sec:flop_analysis}

At each outer iteration~$k$, the \textsc{LiL-Q} method assembles the collocation matrix $\mathbf{A}^{(k)} \in \mathbb{R}^{N \times P}$ and right-hand side $\mathbf{f}^{(k)} \in \mathbb{R}^{N}$, then solves the resulting linear least-squares problem.
Assembling one column of $\mathbf{A}^{(k)}$ requires evaluating the linearized operator on a single basis function at all $N$ collocation points, costing $\mathcal{O}(N)$ operations per column and $\mathcal{O}(NP)$ for the full matrix. If a Householder QR factorization is used to solve the least squares problem, $2NP^{2} - \tfrac{2}{3}P^{3}$ floating-point operations are required. Since typically $N \gg P$ in the overdetermined setting, the leading term is $2NP^{2}$. The cost over $K$ outer iterations is therefore
\begin{equation}
\label{eq:flop_lilq}
\mathcal{F}_{\textsc{LiL-Q}} = \mathcal{O}(K N P^{2}).
\end{equation}
Comparing~\eqref{eq:flop_lilq} with the \textsc{NiL-N} cost from~\eqref{eq:flop_niln} gives the ratio
\begin{equation}
\label{eq:flop_ratio}
\frac{\mathcal{F}_{\textsc{LiL-Q}}}{\mathcal{F}_{\textsc{NiL-N}}} \;\approx\; \mathcal{O}\!\left(\frac{K_{\textsc{LiL-Q}}\, P}{K_{\textsc{NiL-N}}}\right)
\end{equation}
for \textbf{equal parameter and collocation counts}.
Consequently, \textsc{LiL-Q} is cheaper whenever $K_{\textsc{LiL-Q}}\, P < K_{\textsc{NiL-N}}$. The additional factor of~$P$ per iteration reflects the cost of the direct QR solve relative to a single gradient step. In our computational examples, it is offset by the reduction from thousands of gradient steps to single-digit (occasionally low-double-digit) outer iterations.

\section{Numerical Experiments}
\label{sec:numerical_experiments}

We present a sequence of numerical experiments using seven benchmark problems spanning a range of complexity from illustrative model problems to highly heterogeneous Darcy flow and coupled nonlinear systems. We include alongside \textbf{\textsc{LiL-Q}} and the standard PINN (\textbf{\textsc{NiL-N}}) two intermediate formulations that each address only one of the two sources of nonlinearity, enabling their individual contributions to be distinguished. Our primary comparison axis is with other PINN formulations, which constitute the natural baseline for a PINN training methodology. A finite-volume reference solution is included for the Darcy benchmark, where a direct comparison against a classical solver is most informative, and the structural relationship between \textsc{LiL-Q} and classical mesh-based methods is examined separately in the Discussion. Broader head-to-head comparisons against classical solvers across the full benchmark suite are deferred to future work.
 
\begin{enumerate}
    \item \textbf{\textsc{NiL-Q}}: The governing PDE is quasilinearized as in \textsc{LiL-Q}, but the solution at each iteration is represented by a \textsc{NiL} network. Subsequently, the collocation residual equations are architecturally nonlinear and the loss is minimized via gradient-based optimization. This formulation removes the physical nonlinearity within each iteration, but, since the network outputs are nonlinear in $\boldsymbol{\theta}$, the stationarity condition remains a nonlinear system requiring iterative optimization.
    
    \item \textbf{\textsc{LiL-N}}: The solution is represented by a \textsc{LiL} representation, but the original nonlinear operator $\mathcal{N}$ is retained without quasilinearization. The residual $\mathbf{R}(\boldsymbol{\beta}) = \mathcal{N}(\hat{u}(\cdot;\,\boldsymbol{\beta}))$ evaluated at collocation points defines a nonlinear loss despite the linear architectural parameterization, because $\mathcal{N}$ is nonlinear.
\end{enumerate}

\subsection{Experimental setup}
\label{ssec:experiment_setup}
Unless otherwise stated, \textsc{NiL} implementations use fully connected feedforward networks with two hidden layers and hyperbolic tangent activation functions, while \textsc{LiL} implementations use tensor-product expansions of univariate basis functions whose specific configuration varies by problem and is detailed in each experiment. The \textsc{NiL} and \textsc{LiL} representations are constructed with approximately equal parameter counts for each experiment (fixing the parameter $P$ that appears in the FLOP accounting of Section~\ref{sec:flop_analysis}). The three nonlinear formulations (\textsc{NiL-N}, \textsc{NiL-Q}, \textsc{LiL-N}) are optimized using L-BFGS with strong Wolfe line search conditions. \textsc{LiL-Q} subproblems are solved via column-pivoted QR with orthogonal factorization. For \textsc{NiL-Q}, the network parameters from outer iteration $k$ serve as the initial guess for iteration $k{+}1$. All methods are initialized to the zero function. Collocation points are distributed with an approximate $10{:}1$ ratio of samples to parameters, allocated roughly $85\%$ to the interior and $15\%$ to auxiliary conditions.
 
In practice, different weight factors are assigned to the PDE, boundary, and initial-condition residuals to balance their contributions. We define the weighted mean-squared error
\begin{equation}
\label{eq:mse_metric}
\operatorname{MSE}
  = \lambda_{\mathrm{pde}}\,\frac{1}{N_{\mathrm{pde}}} \sum_{i \in \mathcal{I}_{\mathrm{pde}}} R_i^2
  \;+\; \lambda_{\mathrm{bc}}\,\frac{1}{N_{\mathrm{bc}}} \sum_{i \in \mathcal{I}_{\mathrm{bc}}} R_i^2
  \;+\; \lambda_{\mathrm{ic}}\,\frac{1}{N_{\mathrm{ic}}} \sum_{i \in \mathcal{I}_{\mathrm{ic}}} R_i^2,
\end{equation}
where $\mathcal{I}_{\mathrm{pde}}$, $\mathcal{I}_{\mathrm{bc}}$, and $\mathcal{I}_{\mathrm{ic}}$ denote the index sets of collocation points for each residual type, $N_{\mathrm{pde}}$, $N_{\mathrm{bc}}$, and $N_{\mathrm{ic}}$ are their respective cardinalities, and $R_i = \mathcal{N}(\hat{u})(\mathbf{x}_i)$ is the pointwise residual from~\eqref{eq:loss_function}.
Unless otherwise stated, the weights are $\lambda_{\mathrm{pde}} = 1$, $\lambda_{\mathrm{bc}} = 10$, $\lambda_{\mathrm{ic}} = 10$.
The target MSE for each experiment comparing the four PINN training formulations is set to the value achieved by \textsc{LiL-Q}, enabling matched-accuracy comparisons. Training is terminated when $\operatorname{MSE}$ falls below the target tolerance, when the coefficient update norm $\|\boldsymbol{\beta}^{(k+1)} - \boldsymbol{\beta}^{(k)}\|_2$ falls below a specified tolerance (for quasilinear methods), or when the iteration budget is exhausted. 

We record the following performance metrics: the final MSE attained, the number of iterations required to reach the target tolerance, and the wall-clock runtime. Additionally, we collect LiL coefficient matrix condition numbers and numerical rank.
All experiments were implemented in Python using PyTorch for the gradient-based methods, executed on an NVIDIA RTX~5080 GPU with double-precision arithmetic.
For the \textsc{LiL-Q} solves, we used an Intel Core Ultra~9~275HX CPU ($8$ performance and $16$ efficiency cores, $24$ threads) with $32$~GB system RAM, using double-precision (\texttt{float64}) arithmetic. Each quasilinear step calls the \texttt{scipy.linalg.lstsq} routine with the \texttt{gelsy} driver and multithreaded OpenBLAS.
Because the gradient-based methods and \textsc{LiL-Q} run on different hardware, we treat hardware-agnostic metrics (iteration counts, parameter counts, and the floating-point operation analysis of Section~\ref{sec:flop_analysis}) as the primary basis for comparing computational cost, and report wall-clock runtimes as corroborating, indicative figures. We note that this division is conservative for \textsc{LiL-Q} as its direct solves execute on CPU while the gradient-based baselines use GPU acceleration. It is expected that a GPU-accelerated implementation could widen the reported runtime gaps.

\subsection{2D Bratu}
\label{sec:bratu}
 
The Bratu equation, introduced as the worked example in Section~\ref{sec:bratu_example}, is a nonlinear elliptic PDE that serves as a standard steady model problem. It admits two parameter-dependent solution branches that coalesce for the critical coefficient value $\lambda_c \approx 6.808$, and admits no solution for $\lambda >\lambda_c$~\cite{Boyd2001}. The problem is posed on the unit square $\Omega = (0,1)^2$ with homogeneous Dirichlet boundary conditions and we fix $\lambda = 6.2$.

The \textsc{LiL} representations are \emph{linear Fourier networks} comprised of a single-hidden-layer with frozen frequencies and an output-layer with trained weights. Concretely, for a selected $P$, let $p_d^2 = P$ and define the univariate features,
\[
  \{\phi_k(\xi)\}_{k=1}^{p_d}
  = \{\cos(n\pi\xi)\}_{n=0}^{n_c-1}
    \;\cup\;
    \{\sin(n\pi\xi)\}_{n=1}^{n_s},
\]
where,
\[
  n_c = \Big\lfloor\frac{p_d-1}{2}\Big\rfloor + 1,
\]
and $n_s = p_d - n_c$. The network output for a given $\left(x,y\right) \in \left[0,1\right]^2$ is the tensor product,
\begin{equation}\label{eq:bratu-fourier-network}
  \hat u(x,y;\boldsymbol\beta)
  = \sum_{k=1}^{p_d}\sum_{\ell=1}^{p_d}
    \beta_{k\ell}\,\phi_k(x)\,\phi_\ell(y),
\end{equation}
with $\boldsymbol\beta\in\mathbb{R}^{P}$ and $P=p_d^2$. We conduct simulations using $p_d\in\{5,10,15\}$, and hence $P\in\{25,100,225\}$.

\begin{figure}
    \centering
    \includegraphics[width=\linewidth]{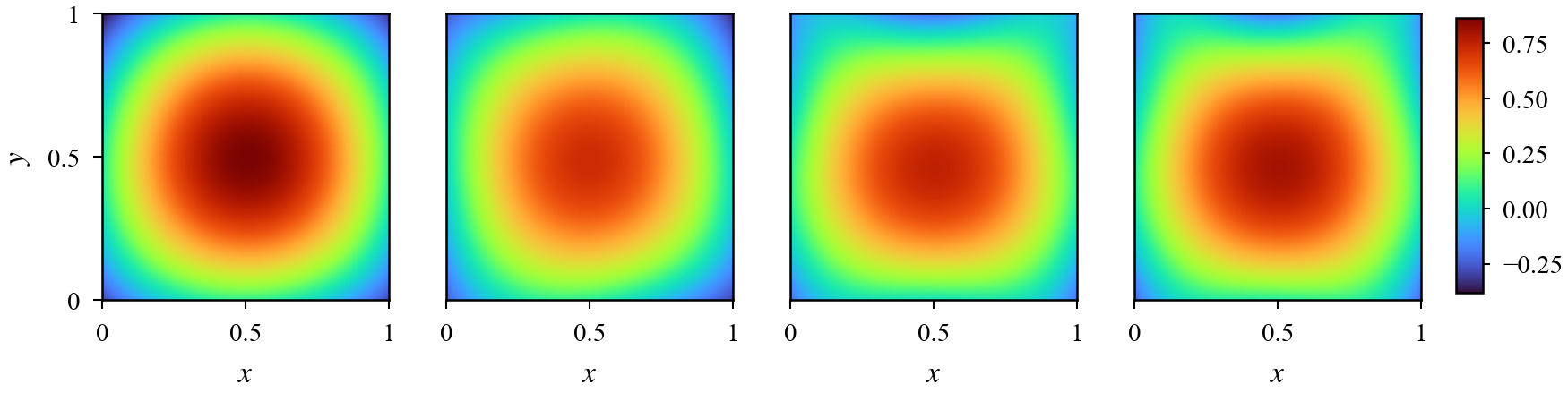} \\
    \includegraphics[width=\linewidth]{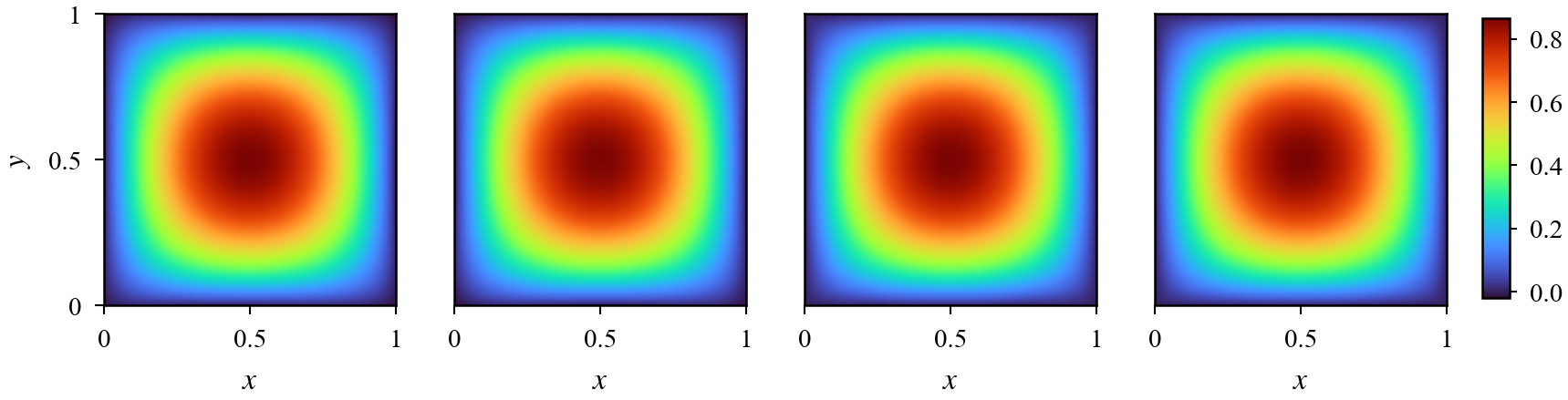}
    \caption{Bratu equation: simulated solution fields using $P = 25$ (top row) and $P = 225$ (bottom row) for \textsc{NiL-N}, \textsc{NiL-Q}, \textsc{LiL-N} and \textsc{LiL-Q} from left to right.}
    \label{fig:bratu_solutions}
\end{figure}

\begin{table}
\centering
\caption{Bratu equation: iteration counts and runtimes for various methods and number of network parameters $P$. An asterisk (*) indicates no convergence after the maximum allowable number of iterations.}
\label{tab:bratu_results}
\small
\begin{tabular}{@{}ccrrrrrrrr@{}}
\toprule
& & \multicolumn{4}{c}{Iterations} & \multicolumn{4}{c}{Runtime (s)} \\
\cmidrule(lr){3-6} \cmidrule(lr){7-10}
$P$ & $\operatorname{MSE}_{\mathrm{tol}}$ & \textsc{NiL-N} & \textsc{NiL-Q} & \textsc{LiL-N} & \textsc{LiL-Q} & \textsc{NiL-N} & \textsc{NiL-Q} & \textsc{LiL-N} & \textsc{LiL-Q} \\
\midrule
25  & $2.5 \times 10^{-1}$   & 76    & 309     & 33    & 2 & 1.11 & 6.49  & 0.91  & 0.0004 \\
100 & $1 \times 10^{-4}$   & 10000*  & 10000*  & 1597   & 3 & 220.9   & 237.7   & 24.63  & 0.032 \\
225 & $2.5 \times 10^{-7}$ & 10000*  & 10000*  & 7748  & 3 & 279.6   & 273.4   & 163.7   & 0.14 \\
\bottomrule
\end{tabular}
\end{table}

\begin{figure}
\centering
\includegraphics[width=\linewidth]{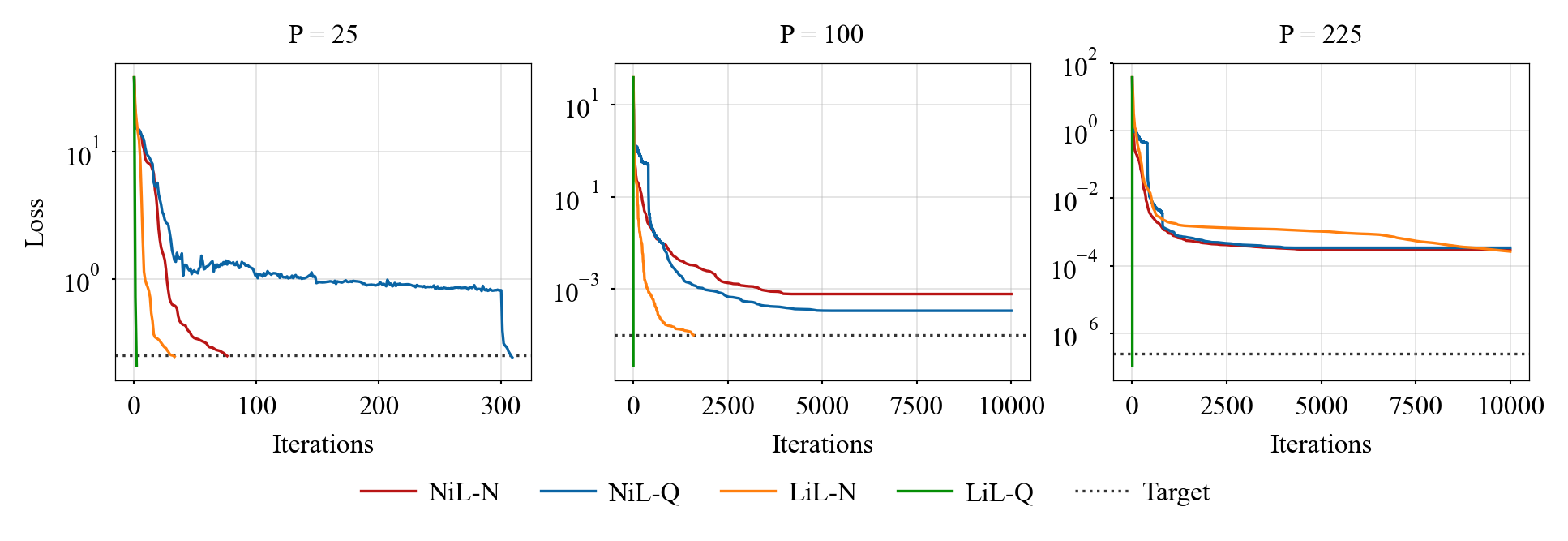}
\caption{Bratu equation: convergence histories for various methods using networks with $P \in \{25, 100, 225\}$ parameters.}
\label{fig:bratu_convergence_by_size}
\end{figure}

Fig.~\ref{fig:bratu_solutions} presents solution fields obtained using $P = 25$ and $225$ and Table~\ref{tab:bratu_results} summarizes the performance results. The convergence histories are presented in Fig.~\ref{fig:bratu_convergence_by_size}. With $P = 25$ trainable parameters, all four methods reach the target tolerance, yet the iteration counts across the various methods span two orders of magnitude: \textsc{LiL-Q} converges in 2 iterations, \textsc{LiL-N} in 33, \textsc{NiL-N} in 76, and \textsc{NiL-Q} in 309. As the number of parameters is increased to $P = 100$ and $P = 225$, both \textsc{NiL} formulations exhaust the 10{,}000-iteration budget without satisfying the convergence criterion due to stagnation which in practice, is typically addressed by modifying the learning rate schedule (not done here, to maintain a fair comparison across methods without heuristic interventions). \textsc{LiL-Q} converges in 3 iterations at both enriched basis sizes, while \textsc{LiL-N} converges but with a rapidly growing iteration count (1{,}597 and 7{,}748 respectively). This suggests that the physical nonlinearity preserves optimization difficulty, even when the architectural nonlinearity has been removed. The wall-clock runtimes corroborate this iteration-count disparity with \textsc{LiL-Q} converging in subseconds across all three configurations which is two to three orders of magnitude faster than the \textsc{NiL} formulations. As noted above, this is quantitatively highly implementation and platform dependent, but this margin is likely to be conservative given that \textsc{LiL-Q} runs on CPU while the \textsc{NiL} baselines run on GPU.

\begin{figure}
\centering
\includegraphics[width=\linewidth]{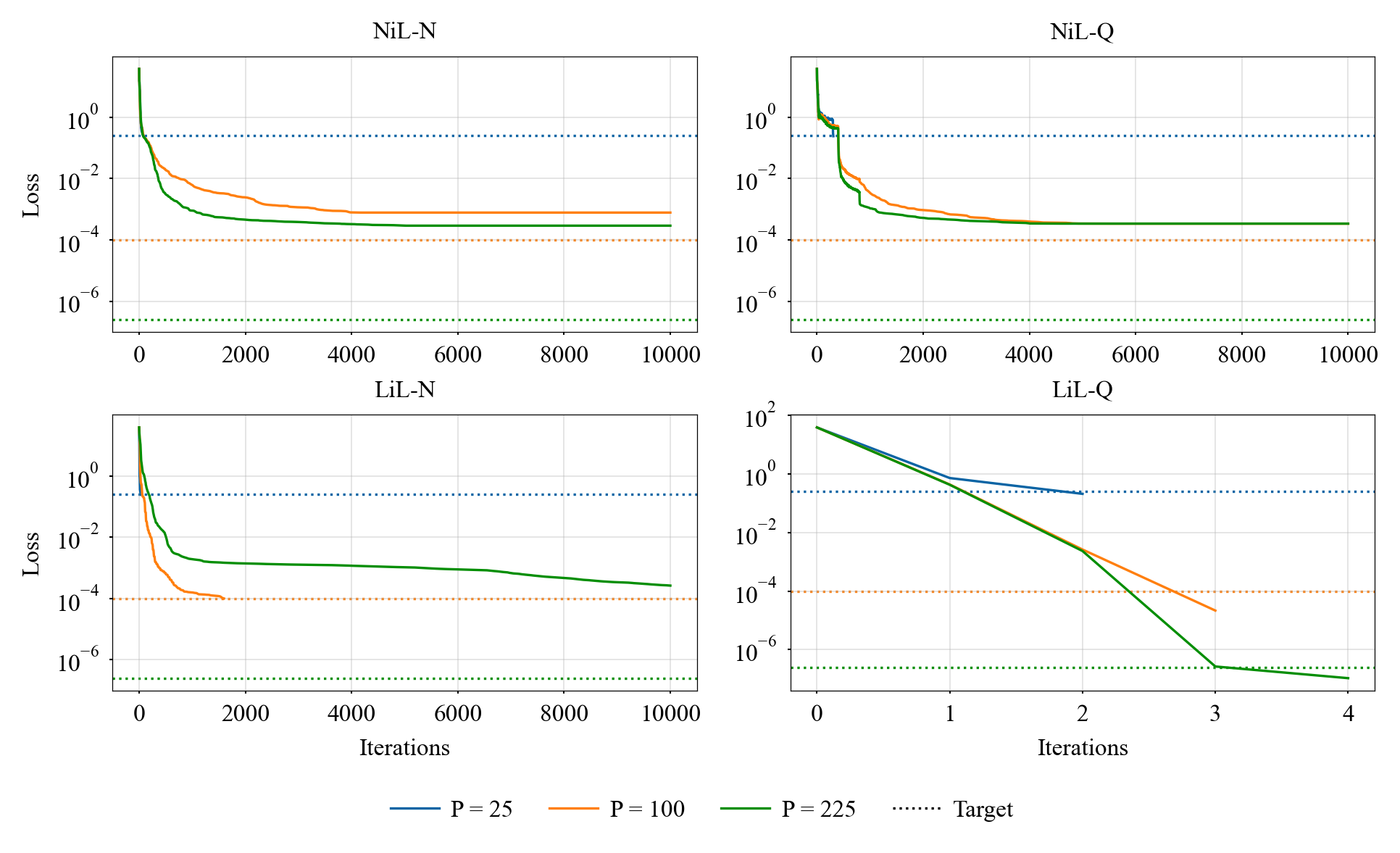}
\caption{Bratu equation: convergence histories organized by method.}
\label{fig:bratu_convergence_by_method}
\end{figure}

Fig.~\ref{fig:bratu_convergence_by_method} presents the convergence histories organized by method. For \textsc{NiL-N} and \textsc{NiL-Q}, increasing the basis size from $P = 25$ to $P = 225$ progressively slows convergence where the loss curves flatten and develop extended plateaus characteristic of increasingly ill-conditioned nonconvex landscapes. The \textsc{LiL-Q} panel presents a qualitatively different picture where the convergence trajectories for all three basis sizes are nearly superimposed, each outer iteration reducing the loss by several orders of magnitude, with the iteration count remaining at 2 to 3 regardless of $P$ while the loss floor decreases from $3 \times 10^{-2}$ at $P = 25$ to $1.5 \times 10^{-6}$ at $P = 225$. This confirms that basis enrichment lowers the achievable accuracy floor without necessarily inflating the number of iterations required to reach it.

\begin{figure}
    \centering
    \includegraphics[width=\linewidth]{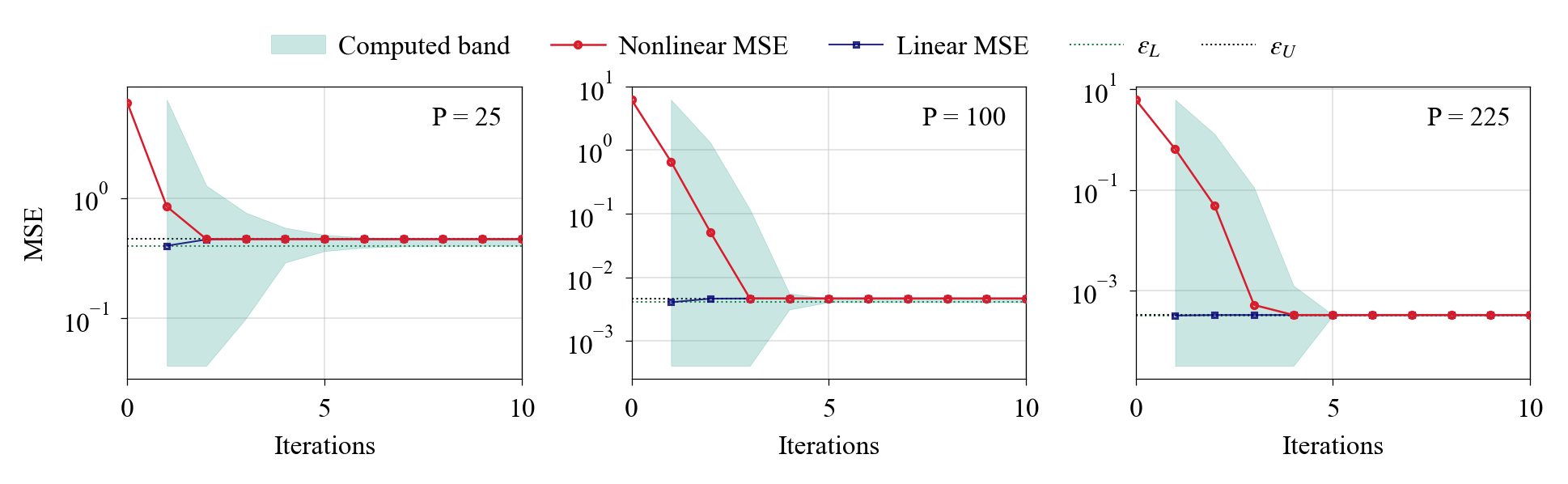}
    \caption{Bratu equation: comparison of linearized and nonlinear residual norms for \textsc{LiL-Q} at $P \in \{25, 100, 225\}$. The asymptotic agreement between the two quantities corroborates the convergence analysis of Section~\ref{sec:properties}.}
    \label{fig:bratu_residual_bounds}
\end{figure}

Fig.~\ref{fig:bratu_residual_bounds} presents the evolution of the linearized collocation residual norm $\|\mathbf{R}_{\mathrm{lin}}^{(k)}\|$ and the nonlinear collocation residual norm $\|\mathbf{R}^{(k)}\|$ across quasilinear iterations for each basis size (LiL-Q).
To qualitatively illustrate the convergence theory of Section~\ref{sec:properties}, we also compute estimates of the bounds in~\eqref{eq:residual_two_sided} at each iteration as well as the minimum and maximum linear residual square error. The nonlinear residual consistently falls within the estimated band and as the updates contract, the nonlinear residual tracks the linearized residual and both quantities settle within the predicted bounds. At stagnation, further iterations cannot improve the solution, and basis enrichment is required.

\subsection{Viscous Burgers}
\label{sec:burgers}
 
Next, we consider the time-dependent advection-diffusion problem with Burgers flux function,
\begin{equation}
\label{eq:burgers}
\begin{cases}
    \displaystyle\frac{\partial u}{\partial t} + u\frac{\partial u}{\partial x} = \nu \frac{\partial^2 u}{\partial x^2}, & (x,t) \in (-1,1) \times (0,1], \\
    u(x,t) = -\sin(\pi x), \; & x\in (-1,1), t=0, \\
    u(x,t) = 0 & x = \pm 1, \; t = 0
\end{cases},
\end{equation}
and $\nu = 0.1$. The Bellman-Kalaba quasilinearization yields the sequence of linear subproblems defined by,

\begin{equation}
\label{eq:quasilin_burgers}
\begin{cases}
    \frac{\partial u^{(k+1)}}{\partial t} + u^{(k)}\frac{\partial u^{(k+1)}}{\partial x} + u^{(k+1)}\frac{\partial u^{(k)}}{\partial x} - \nu \frac{\partial^2 u^{(k+1)}}{\partial x^2} = u^{(k)}\frac{\partial u^{(k)}}{\partial x}, & (x,t) \in (-1,1) \times (0,1], \\
    u^{(k+1)}(x,t) = -\sin(\pi x), \; & x\in (-1,1), t=0, \\
    u^{(k+1)}(x,t) = 0 & x = \pm 1, \; t = 0
\end{cases},
\end{equation}
 
The primary \textsc{LiL} representation for the four-method comparison is a linear Fourier network constructed as a tensor product of $\sin(n\pi\xi_x)$ activation for $n = 1, \ldots, p_x$ in the spatial direction and the mixed Fourier set $\{\cos(m\pi\xi_t)\, \cup\, \sin(m\pi\xi_t)\}$ in the temporal direction, where $\xi_x$ and $\xi_t$ map the physical coordinates to the unit interval.
The sine spatial activations vanish at $x = \pm 1$, enforcing the homogeneous Dirichlet boundary conditions by construction.
All four methods were tested at $p_x \cdot p_t = P \in \{25, 100, 225, 400, 625\}$ trainable parameters.
 
\begin{table}
\centering
\caption{Burgers equation: iteration counts and runtimes. An asterisk (*) indicates no convergence.}
\label{tab:burgers_iterations}
\small
\begin{tabular}{@{}ccrrrrrrrr@{}}
\toprule
& & \multicolumn{4}{c}{Iterations} & \multicolumn{4}{c}{Runtime (s)} \\
\cmidrule(lr){3-6} \cmidrule(lr){7-10}
$P$ & Target $\operatorname{MSE}$ & \textsc{NiL-N} & \textsc{NiL-Q} & \textsc{LiL-N} & \textsc{LiL-Q} & \textsc{NiL-N} & \textsc{NiL-Q} & \textsc{LiL-N} & \textsc{LiL-Q} \\
\midrule
25  & $6 \times 10^{-2}$ & 214    & 99    & 487   & 2 & 4.32  & 11.8  & 1.37  & 0.0005 \\
100 & $1 \times 10^{-3}$ & 1047   & 1796   & 93   & 3 & 26.5  & 45.6  & 2.19  & 0.036 \\
225 & $2 \times 10^{-5}$ & 4946   & 7500*   & 1863*  & 3 & 164.1  & 231.8  & 31.9  & 0.12 \\
400 & $1 \times 10^{-7}$ & 5902*  & 10000*  & 1322*  & 4 & 216.4  & 338.5   & 22.5  & 0.51 \\
625 & $5 \times 10^{-9}$   & 5578*  & 10000*  & 3373*  & 4 & 142.1  & 232.2   & 159.5  & 1.26 \\
\bottomrule
\end{tabular}
\end{table}
 
Table~\ref{tab:burgers_iterations} reports iteration counts and runtimes.
Similar to the observations for the Bratu problem, both \textsc{NiL} formulations exhaust the iteration budget with increasing network size, and \textsc{LiL-N} also fails to converge beyond $P = 100$, whereas \textsc{LiL-Q} converges in 2 to 4 iterations at every basis size.
Fig.~\ref{fig:burgers_convergence_by_method} reinforces this. The \textsc{NiL-N} and \textsc{NiL-Q} curves spread with increasing $P$, developing extended plateaus at larger basis sizes, while the \textsc{LiL-Q} curves remain nearly superimposed across all five basis dimensions.
Fig.~\ref{fig:burgers_residual_bounds} confirms that the linearized and nonlinear residuals track each other within the predicted band at each $P$, consistent with observed behavior in Bratu. The solution fields for $P = 625$ are shown in Fig.~\ref{fig:burgers_comparison}.
 
\begin{figure}
    \centering
    \includegraphics[width=0.9\linewidth]{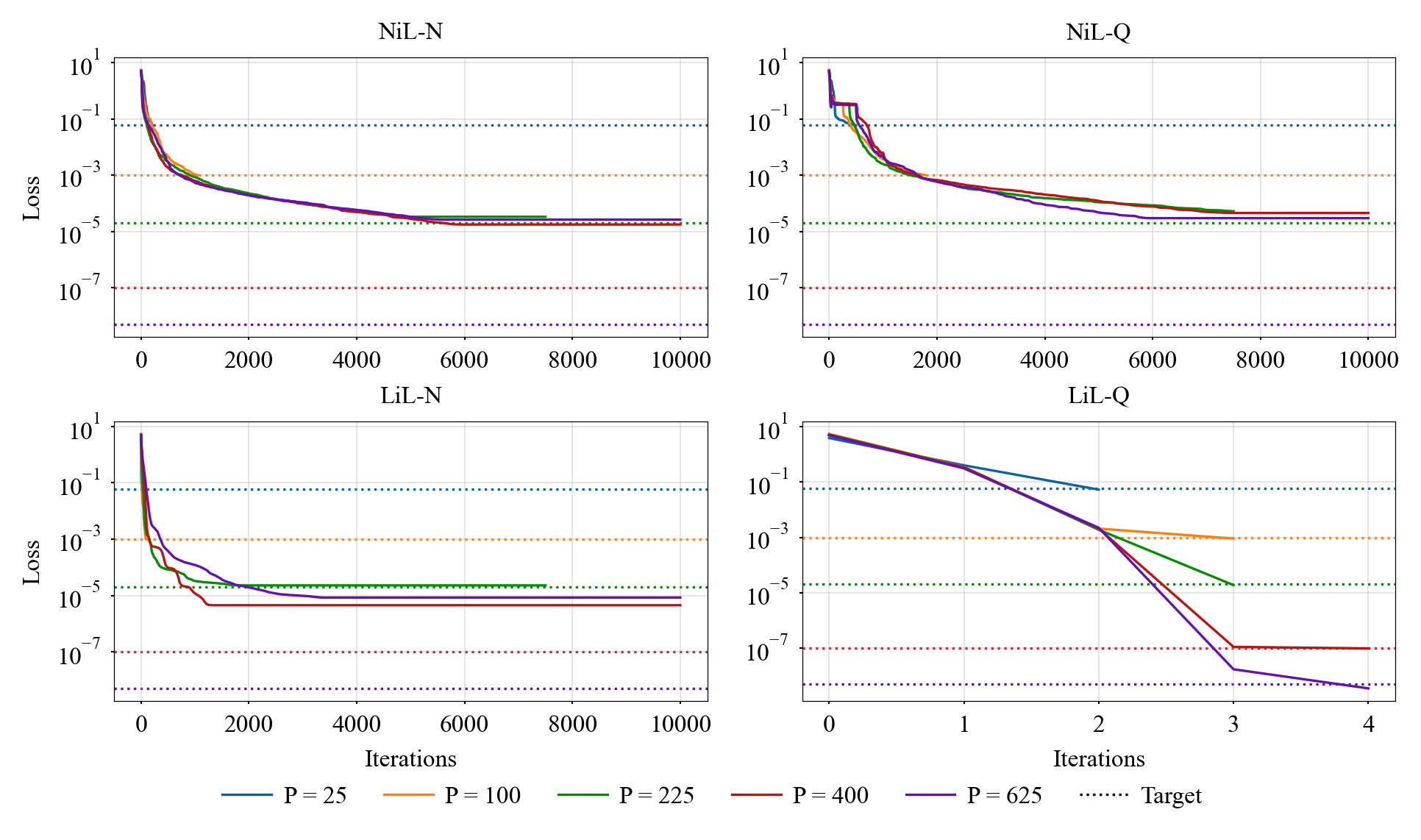}
    \caption{Burgers equation: convergence histories organized by method.
    Each panel shows all five basis sizes for a single formulation.}
    \label{fig:burgers_convergence_by_method}
\end{figure}

\begin{figure}
    \centering
    \includegraphics[width=0.9\linewidth]{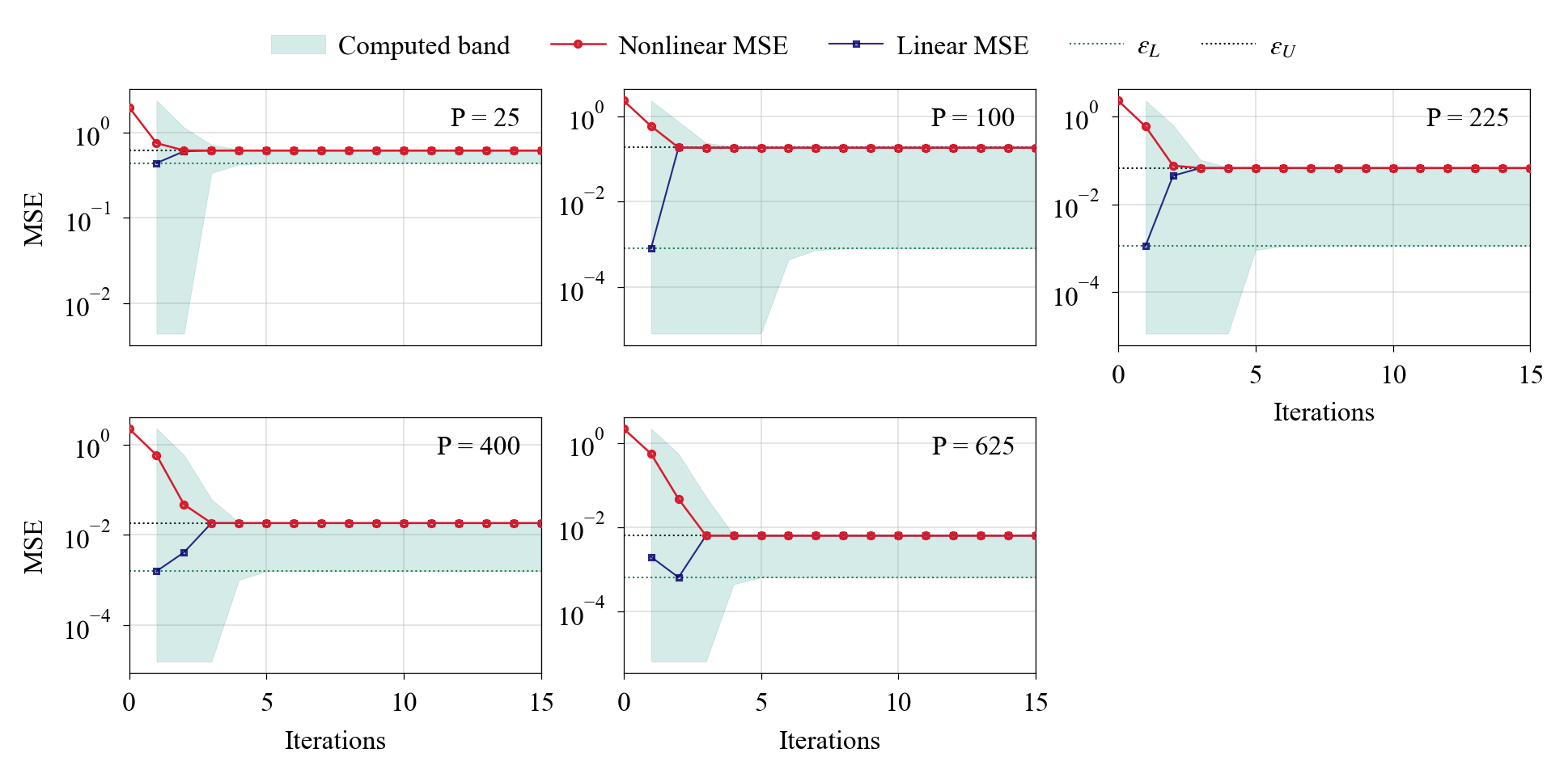}
    \caption{Burgers equation: comparison of linearized and nonlinear residual norms for \textsc{LiL-Q} at $P \in \{25, 100, 225, 400, 625\}$.}
    \label{fig:burgers_residual_bounds}
\end{figure}
 
\begin{figure}
    \centering
    \setlength{\tabcolsep}{2pt}
    \begin{tabular}{@{}cccc@{}}
        \includegraphics[width=0.25\linewidth]{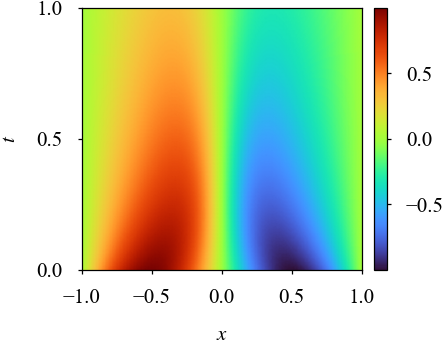} &
        \includegraphics[width=0.25\linewidth]{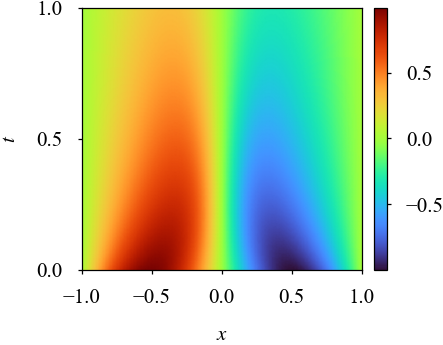} &
        \includegraphics[width=0.25\linewidth]{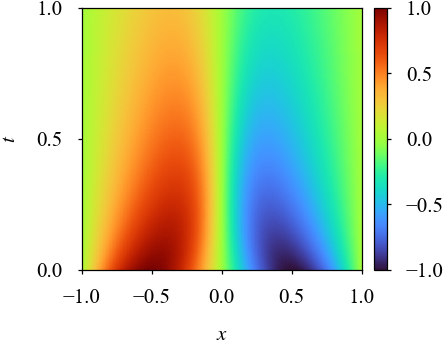} &
        \includegraphics[width=0.25\linewidth]{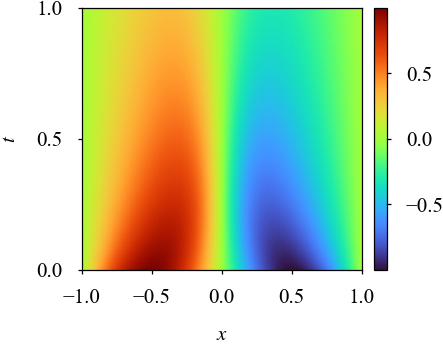} \\[-0.1cm]
        \small (a) \textsc{NiL-N} & \small (b) \textsc{NiL-Q} & \small (c) \textsc{LiL-N} & \small (d) \textsc{LiL-Q}
    \end{tabular}
    \caption{Burgers equation: predicted solution fields at $P = 625$.}
    \label{fig:burgers_comparison}
\end{figure}
 
The four-method comparison demonstrates that \textsc{LiL-Q} converges reliably at every basis size. The theory of Section~\ref{sec:properties} predicts that the nonlinear residual floor is set by the best-approximation linear solution residual $\varepsilon_L$ of the chosen basis. That in turn, depends on how well the basis functions span the solution manifold. To test this, we compare eight \textsc{LiL} configurations on the viscous Burgers equation, all with fixed $P = 625$, and spanning linear Fourier networks with different activation selections, Chebyshev polynomial expansions, and an extreme learning machine (ELM) with random $\tanh$ activations.
Each configuration was run for five outer iterations which was sufficient for all to reach stagnation. Table~\ref{tab:basis_configs} reports the final $\operatorname{MSE}$ values in ascending order and Fig.~\ref{fig:basis_convergence} shows the convergence histories.

The final $\operatorname{MSE}$ values span over eight orders of magnitude, ranging from $1.7 \times 10^{-8}$ (Sin$\times$Cheb) to $5.0$ (Sin$\times$Sin).
The worst-performing configuration (Sin$\times$Sin) is due to a lack of ability to represent the initial condition. That is because $\sin(m\pi t)$ vanishes at $t = 0$ for all $m$, making the initial condition $u(x,0) = -\sin(\pi x)$ unrepresentable. Subsequently, the smallness condition of Theorem~\ref{thm:nk_convergence} is likely violated. The two best-performing configurations comprise Sine activations in $x$ that vanish at $x = \pm 1$, enforcing the homogeneous Dirichlet conditions by construction, while Chebyshev polynomials in $t$ retain the flexibility to represent the non-zero initial condition.
Boundary condition alignment alone does not suffice, however. The Cheb$\times$Cheb approximator can represent arbitrary boundary values through superposition, yet it stagnates three orders of magnitude above Sin$\times$Cheb because its column space does not efficiently cover the solution manifold globally.
Fourier$\times$Cheb and Fourier$\times$Fourier achieve nearly identical losses ($1.3 \times 10^{-5}$), indicating that for this problem the spatial activations drive the performance hierarchy while the temporal choice is secondary.
These results support the theoretical prediction that within the \textsc{LiL-Q} framework, the basis governs the achievable accuracy. Incorporating problem knowledge (e.g., boundary behavior or expected smoothness) into the basis selection influences $\varepsilon_L$ for a given parameter count.
 
\begin{table}
\centering
\caption{\textsc{LiL} configurations for the viscous Burgers equation ($P = 625$, $\nu = 0.1$), ordered by final $\operatorname{MSE}$.
Each row specifies the spatial and temporal activations whose tensor product forms the two-dimensional network.}
\label{tab:basis_configs}
\small
\begin{tabular}{@{}lllc@{}}
\toprule
Configuration & Spatial ($x$) & Temporal ($t$) & Final $\operatorname{MSE}$ \\
\midrule
Sin $\times$ Cheb
    & $\sin(n\pi x)$, $n{=}1{:}25$
    & $T_j(t)$, $j{=}0{:}24$
    & $1.7 \times 10^{-8}$ \\[4pt]
AugSin $\times$ Cheb
    & $\{1,\, x,\, \sin(n\pi x)_{n=1}^{23}\}$
    & $T_j(t)$, $j{=}0{:}24$
    & $4.3 \times 10^{-8}$ \\[4pt]
Fourier $\times$ Cheb
    & $\{\cos,\sin\}(n\pi x)$, $n{=}0{:}12$
    & $T_j(t)$, $j{=}0{:}24$
    & $1.3 \times 10^{-5}$ \\[4pt]
Fourier $\times$ Fourier
    & $\{\cos,\sin\}(n\pi x)$, $n{=}0{:}12$
    & $\{\cos,\sin\}(m\pi t)$, $m{=}0{:}12$
    & $1.3 \times 10^{-5}$ \\[4pt]
Cheb $\times$ Cheb
    & $T_i(x)$, $i{=}0{:}24$
    & $T_j(t)$, $j{=}0{:}24$
    & $7.8 \times 10^{-5}$ \\[4pt]
ELM ($\tanh$)
    & \multicolumn{2}{l}{625 random $\tanh$ neurons}
    & $5.0 \times 10^{-2}$ \\[4pt]
Cos $\times$ Cheb
    & $\cos(n\pi x)$, $n{=}0{:}24$
    & $T_j(t)$, $j{=}0{:}24$
    & $1.7 \times 10^{-1}$ \\[4pt]
Sin $\times$ Sin
    & $\sin(n\pi x)$, $n{=}1{:}25$
    & $\sin(m\pi t)$, $m{=}1{:}25$
    & $5.0$ \\
\bottomrule
\end{tabular}
\end{table}
 
\begin{figure}
\centering
\includegraphics[width=0.7\linewidth]{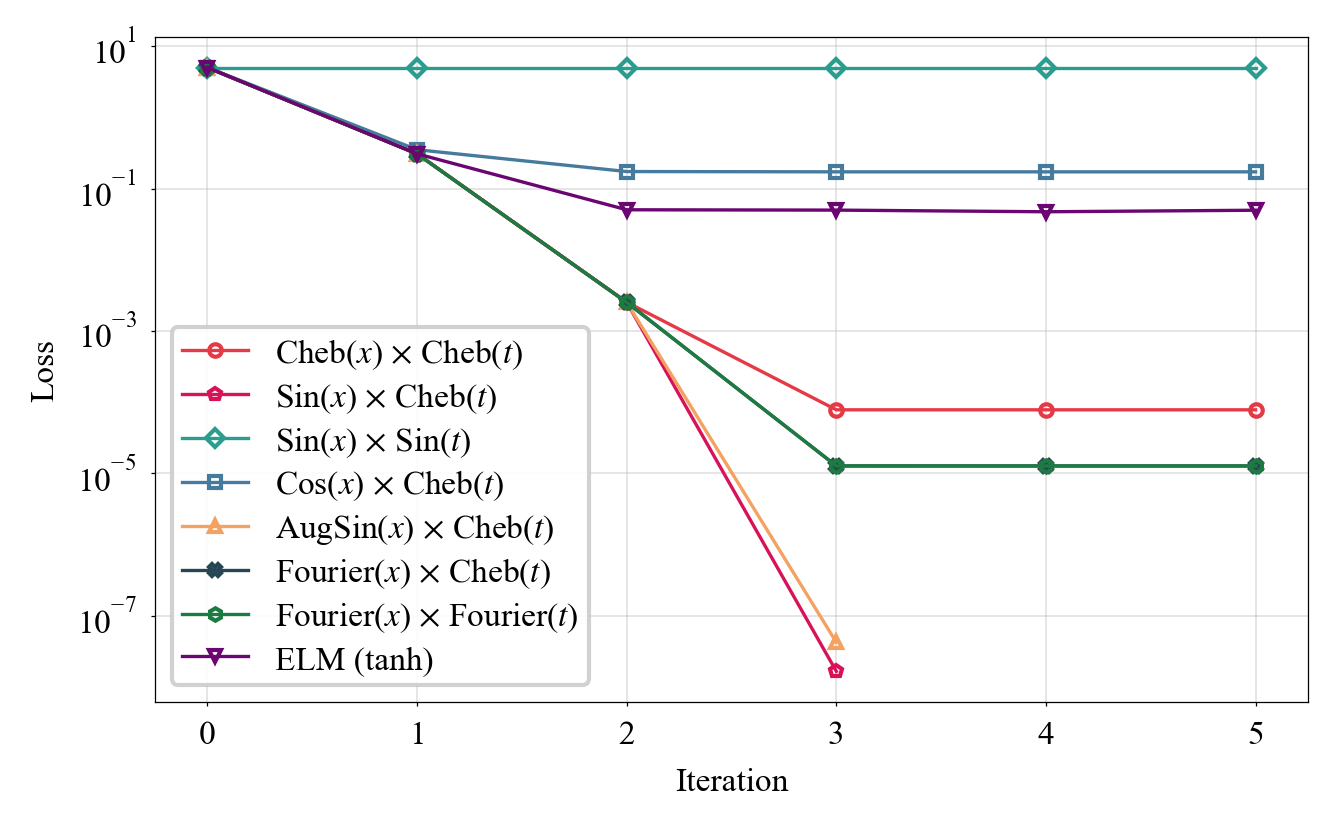}
\caption{Convergence histories for eight \textsc{LiL} configurations on the viscous Burgers equation ($P = 625$).}
\label{fig:basis_convergence}
\end{figure}

\subsection{Buckley-Leverett}
\label{sec:buckley_leverett}
 
The Buckley-Leverett equation models two-phase immiscible displacement in porous media~\cite{Buckley1942}. The \textit{fractional flow} flux function introduces stiff nonlinearity characterized by a mobility ratio dependent inflection point and sonic points (maxima or minima) with non-zero gravity number~\cite{nlnbl}. 
Following Fraces and Tchelepi~\cite{FracesTchelepi2021}, we regularize the equation with an artificial diffusion term:
\begin{equation}
\label{eq:bl_governing}
\frac{\partial S}{\partial t} + \frac{\partial f(S)}{\partial x} - \varepsilon\frac{\partial^2 S}{\partial x^2} = 0, \qquad (x,t) \in (0,1) \times (0,T],
\end{equation}
with $S(0,t) = 1$ and $S(1,t) = 0$.
The fractional flow function incorporates viscous and gravitational effects through
\begin{equation}
\label{eq:bl_flux}
f(S) = \frac{S^2\bigl(1 - N_g(1-S)^2\bigr)}{S^2 + M(1-S)^2},
\end{equation}
where $M$ is the viscosity ratio and $N_g$ is the gravity number~\cite{LiTchelepi2015}.
We consider two parameter sets that produce solutions of increasing complexity: a viscous flow case ($N_g = 0$, $M = 0.5$, $\varepsilon = 0.1$, $T = 0.4$, $S(x,0) = e^{-10x}$) and a gravity-influenced case ($N_g = -5$, $M = 1.0$, $\varepsilon = 0.1$, $T = 0.175$, $S(x,0) = 1 - (1 + e^{-100(x-0.4)})^{-1}$).
Quasilinearization of~\eqref{eq:bl_governing} yields, for both cases,
\begin{equation}
\label{eq:bl_linearized}
\frac{\partial S^{(k+1)}}{\partial t} + \frac{\partial}{\partial x}\!\left(f'(S^{(k)})\, S^{(k+1)}\right) - \varepsilon \frac{\partial^2 S^{(k+1)}}{\partial x^2} = \frac{\partial}{\partial x}\!\left[f'(S^{(k)})\, S^{(k)} - f(S^{(k)})\right].
\end{equation}
 
This experiment tests whether the convergence behavior observed on Bratu and Burgers extends to the qualitatively more severe rational nonlinearity of the Buckley-Leverett flux, and examines the accuracy limitations imposed by steep saturation fronts.
The \textsc{LiL} representation for both cases is a linear Fourier network with full Fourier activations ($\{\cos, \sin\}$) in both spatial and temporal directions.
All four formulations were tested at $P \in \{64, 256, 576, 1024\}$ trainable parameters, corresponding to 8, 16, 24, and 32 hidden nodes per coordinate direction.
 
\begin{table}
\centering
\caption{Buckley-Leverett (viscous flow, $N_g = 0$): iteration counts and runtimes.
An asterisk (*) indicates the iteration budget was exhausted without convergence.}
\label{tab:bl_case1_iterations}
\small
\begin{tabular}{@{}ccrrrrrrrr@{}}
\toprule
& & \multicolumn{4}{c}{Iterations} & \multicolumn{4}{c}{Runtime (s)} \\
\cmidrule(lr){3-6} \cmidrule(lr){7-10}
$P$ & Target $\operatorname{MSE}$ & \textsc{NiL-N} & \textsc{NiL-Q} & \textsc{LiL-N} & \textsc{LiL-Q} & \textsc{NiL-N} & \textsc{NiL-Q} & \textsc{LiL-N} & \textsc{LiL-Q} \\
\midrule
64    & $8 \times 10^{-2}$ & 179 & 342 & 259 & 11    & 5.11 & 9.32  & 5.41  & 0.052 \\
256   & $1.5 \times 10^{-2}$ & 926    & 972    & 1107 & 4   & 32.49   & 34.38   & 19.1  & 0.26 \\
576   & $7.5 \times 10^{-4}$   & 1785  & 3463  & 15000*    & 4      & 86.95   & 151.1   & 264   & 1.33 \\
1024  & $7.5 \times 10^{-5}$   & 20000*  & 20000*  & 20000*  & 4 & 999.6   & 798.3   & 375.8   & 4.16 \\
\bottomrule
\end{tabular}
\end{table}
 
Table~\ref{tab:bl_case1_iterations} reports the iteration counts and runtimes for the viscous displacement case. Convergence histories are shown in Fig.~\ref{fig:bl_convergence_by_method}.
\textsc{NiL-Q} requires more iterations than \textsc{NiL-N} at every basis size, consistent with the observation from Burgers that linearizing the PDE alone does not simplify the remaining nonconvex training problem and may compound the cost by resetting optimizer state at each outer step.
\textsc{LiL-N} converges at $P = 64$ and $P = 256$ but exhausts the budget at $P \geq 576$, confirming that physical nonlinearity introduces increasingly severe optimization difficulty as the basis grows.
Only \textsc{LiL-Q} converges at every size, requiring 4 to 11 outer iterations.
Fig.~\ref{fig:bl_residual_bounds} confirms that the linearized and nonlinear residuals track each other within the predicted band at each $P$, with the stagnation floor decreasing from approximately $3 \times 10^{-1}$ at $P = 64$ to $10^{-3}$ at $P = 1{,}024$.
 
\begin{figure}
    \centering
    \includegraphics[width=0.9\linewidth]{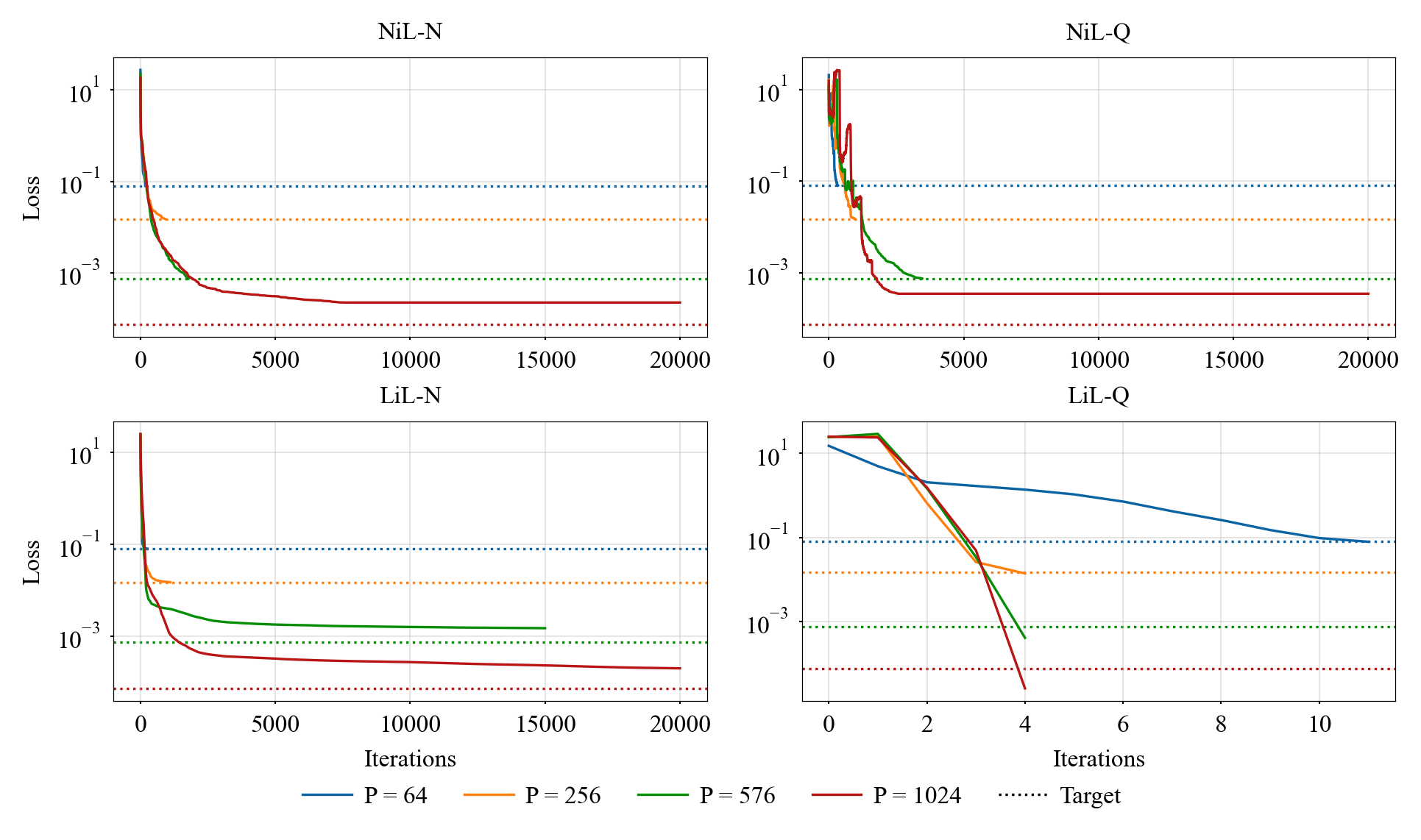}
    \caption{Buckley-Leverett (viscous flow, $N_g = 0$): convergence histories organized by method.
    Each panel shows all four basis sizes for a single formulation.}
    \label{fig:bl_convergence_by_method}
\end{figure}

\begin{figure}
    \centering
    \includegraphics[width=0.95\linewidth]{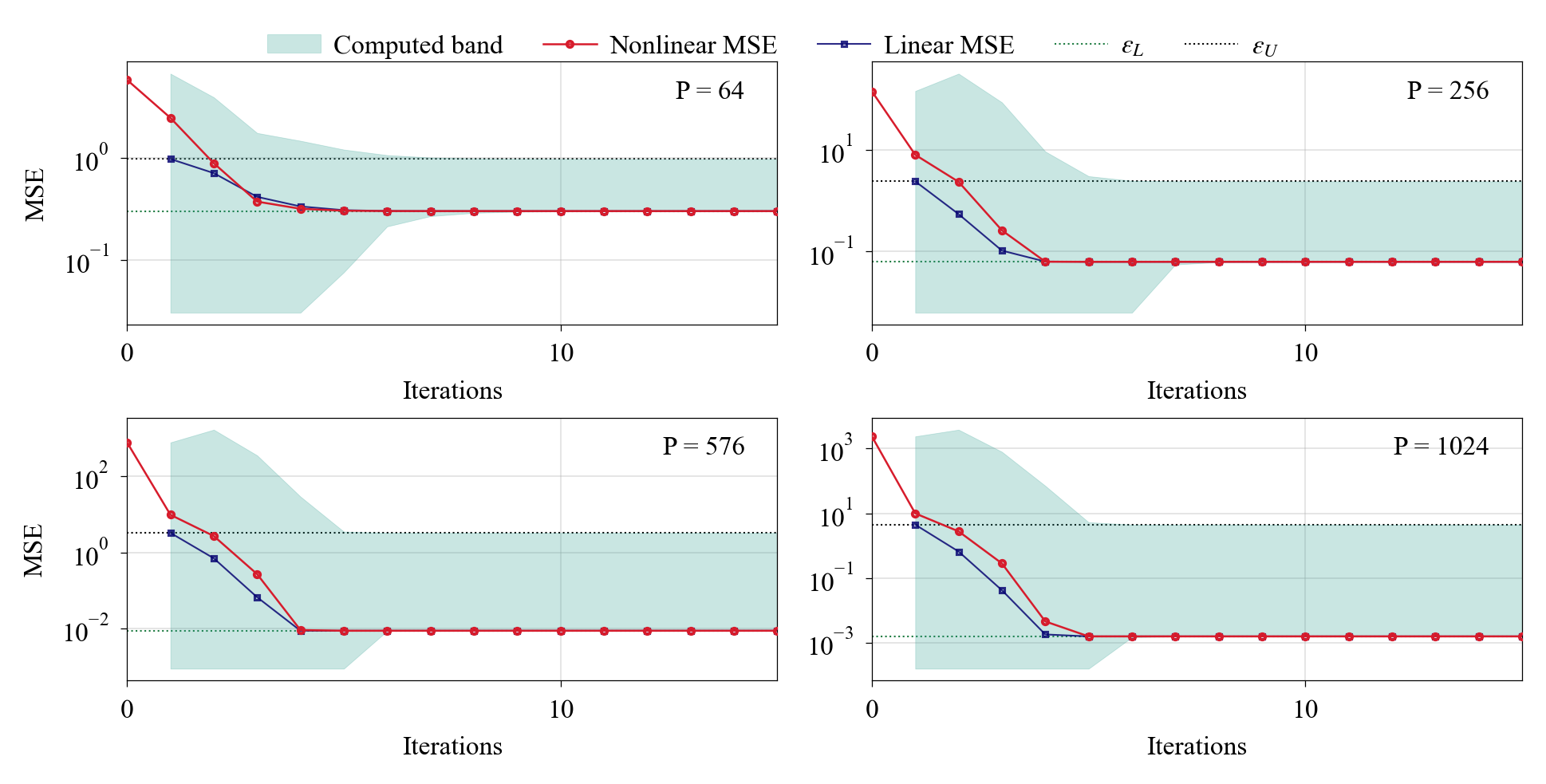}
    \caption{Buckley-Leverett (viscous flow, $N_g = 0$): comparison of linearized and nonlinear residual norms for \textsc{LiL-Q} at $P \in \{64, 256, 576, 1{,}024\}$.}
    \label{fig:bl_residual_bounds}
\end{figure}
 
\begin{table}
\centering
\caption{Buckley-Leverett (gravity-influenced flow, $N_g = -5$): iteration counts and runtimes. An asterisk (*) indicates no convergence.}
\label{tab:bl_case2_iterations}
\small
\begin{tabular}{@{}ccrrrrrrrr@{}}
\toprule
& & \multicolumn{4}{c}{Iterations} & \multicolumn{4}{c}{Runtime (s)} \\
\cmidrule(lr){3-6} \cmidrule(lr){7-10}
$P$ & Target $\operatorname{MSE}$ & \textsc{NiL-N} & \textsc{NiL-Q} & \textsc{LiL-N} & \textsc{LiL-Q} & \textsc{NiL-N} & \textsc{NiL-Q} & \textsc{LiL-N} & \textsc{LiL-Q} \\
\midrule
64    & $2.5 \times 10^{-1}$ & 229    & 213    & 95    & 9  & 7.7  & 5.75  & 2.72  & 0.05 \\
256   & $1.5 \times 10^{-1}$ & 680   & 1804   & 8086   & 7 & 28.08  & 63.71  & 198.3  & 0.49 \\
576   & $7.5 \times 10^{-2}$ & 1002   & 1692   & 15000*  & 10  & 58.94  & 83.9  & 437.9  & 3.16 \\
1024  & $3.5 \times 10^{-2}$   & 11384  & 7178   & 20000*  & 8  & 867   & 457.7   & 568.5   & 9.18 \\
\bottomrule
\end{tabular}
\end{table}
 
The gravity-dominated case introduces counter-current flow where shocks can travel in opposite directions. This introduces substantially more complex solution structure, and it is reflected in the more permissive $\operatorname{MSE}$ targets in Table~\ref{tab:bl_case2_iterations} (e.g., $3.5 \times 10^{-2}$ at $P = 1{,}024$ compared to $7.5 \times 10^{-5}$ in the viscous case).
Once again, neither partial elimination strategy provides reliable gains. \textsc{NiL-Q} yields inconsistent results, marginally faster than \textsc{NiL-N} at $P = 64$ but $2.7\times$ slower at $P = 256$. \textsc{LiL-N} helps at $P = 64$ but inverts sharply, exhausting the budget at $P \geq 576$ as the more complex flux couples more coefficients through the nonlinearity.
Only \textsc{LiL-Q} converges consistently, requiring 7 to 10 outer iterations across all four basis sizes.
The convergence histories in Fig.~\ref{fig:bl_gravity_convergence_by_method} reveal that the \textsc{NiL-Q} loss curves exhibit transient upward spikes at the start of each quasilinear iteration: updating the linearization point changes the subproblem, and the optimizer state accumulated for the previous subproblem is initially counterproductive on the new one.
This effect is negligible on the milder Bratu and Burgers nonlinearities but becomes pronounced on the strongly nonlinear BL flux, contributing to \textsc{NiL-Q} requiring more iterations than \textsc{NiL-N} at most basis sizes.
\textsc{LiL-Q} is immune to this effect because no inner optimization state is carried across outer iterations.
Fig.~\ref{fig:bl_gravity_residual_bounds} confirms that the convergence theory continues to hold under the non-monotonic flux. The nonlinear residual tracks the linearized residual and settles within the predicted band at each $P$, though the stagnation floors are higher than in the viscous case, reflecting the greater difficulty of approximation with smooth basis functions. As will be detailed in Section~\ref{sec:conditioning_results}, at the largest parameter counts the collocation matrix becomes numerically rank-deficient ($\kappa\sim10^{16}$, rank deficit of order $10^{2}$), reflecting the difficulty of resolving the sharp saturation front with a smooth global basis. The column-pivoted QR solve handles this regime stably in this case, however.

The solution fields in Fig.~\ref{fig:bl_comparison} compare the \textsc{NiL-N} and \textsc{LiL-Q} predictions at $P = 1{,}024$ for both the viscous and gravity-influenced cases. Both methods capture the qualitative structure of the counter-current saturation waves despite the substantial differences in iteration count and runtime.
 
\begin{figure}
    \centering
    \includegraphics[width=0.9\linewidth]{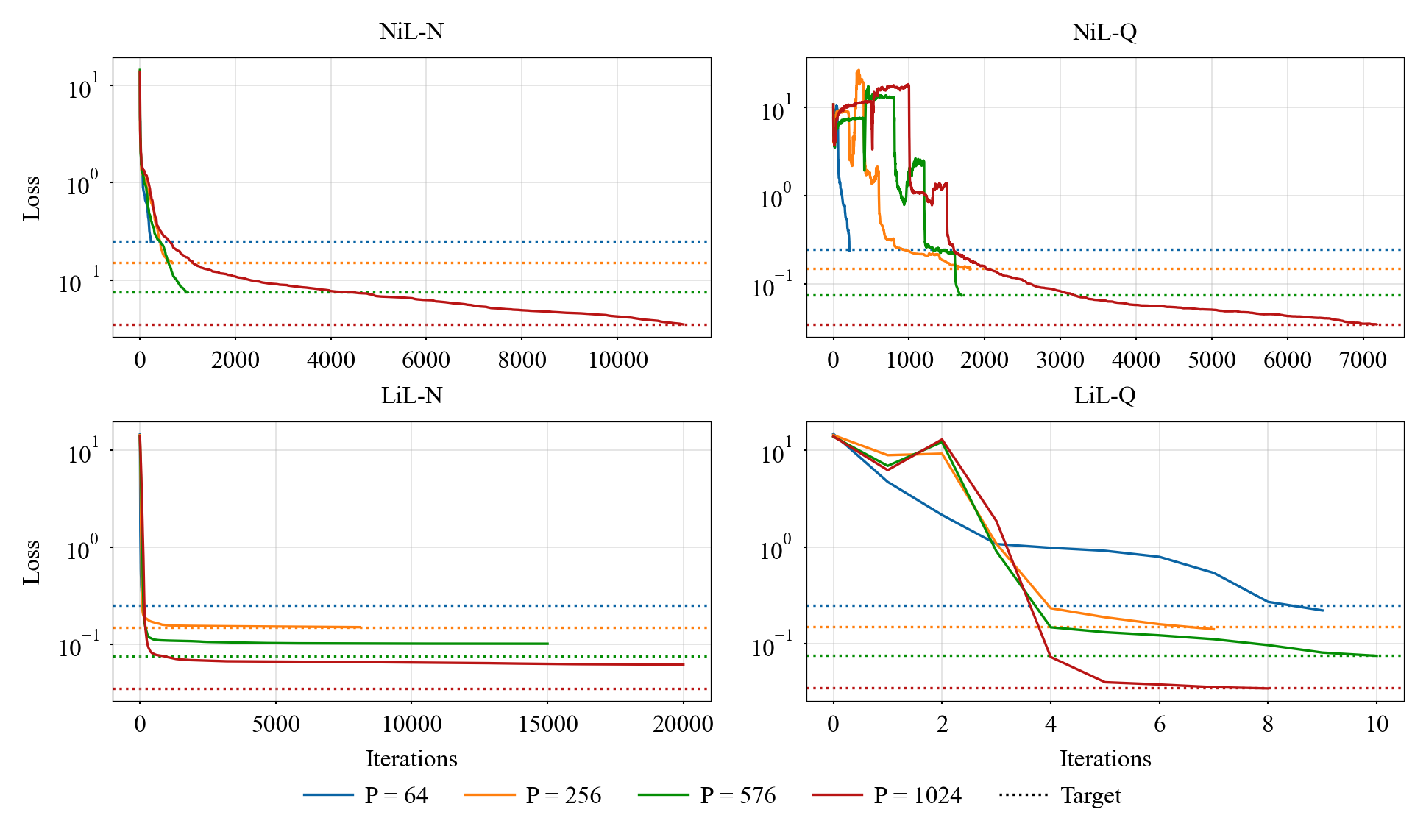}
    \caption{Buckley-Leverett (gravity-influenced flow, $N_g = -5$): convergence histories organized by method.
    The \textsc{NiL-Q} panel exhibits order-of-magnitude loss spikes not observed in any other experiment.}
    \label{fig:bl_gravity_convergence_by_method}
\end{figure}

\begin{figure}
    \centering
    \includegraphics[width=0.95\linewidth]{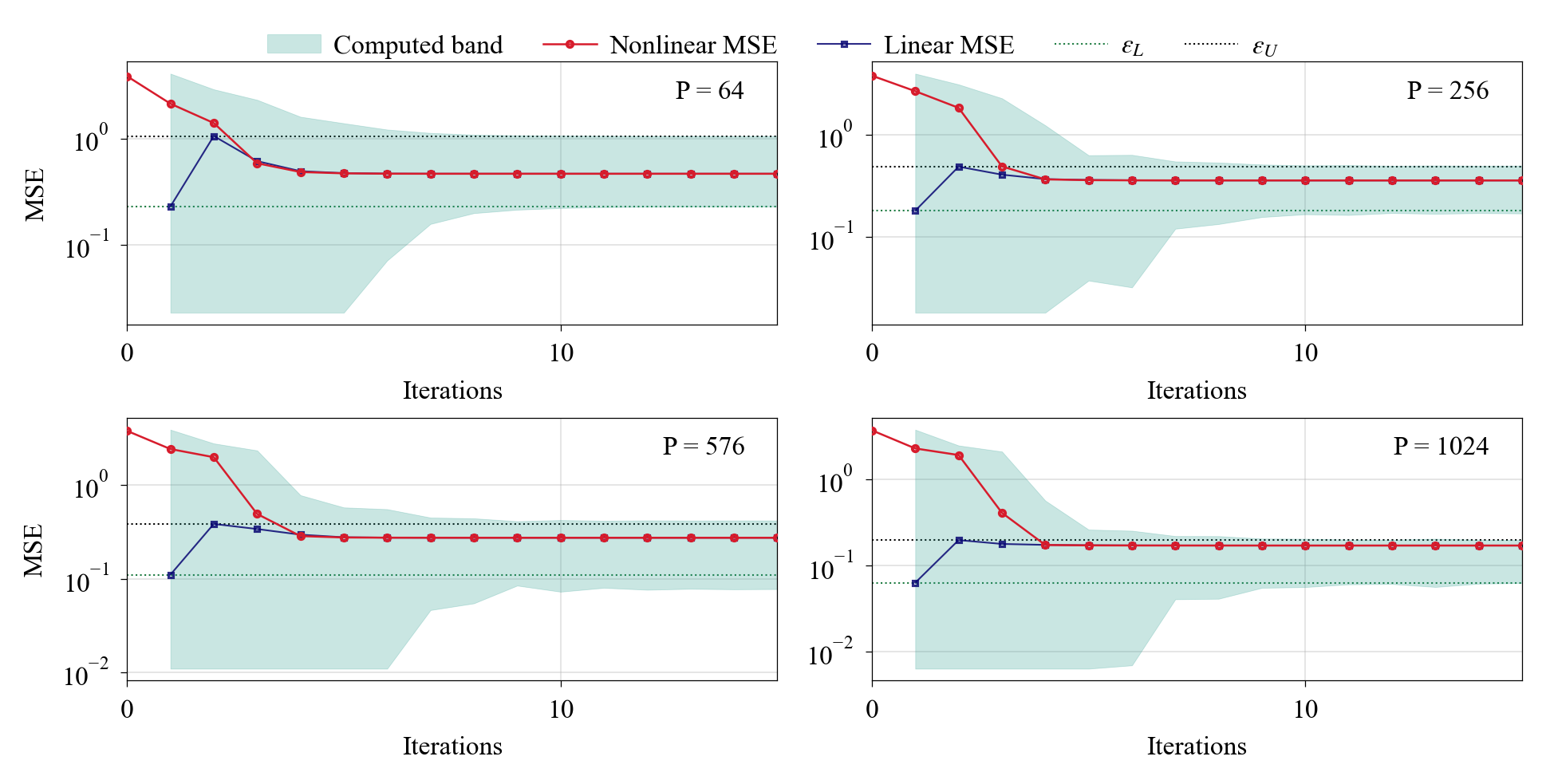}
    \caption{Buckley-Leverett (gravity-influenced flow, $N_g = -5$): comparison of linearized and nonlinear residual norms for \textsc{LiL-Q} at $P \in \{64, 256, 576, 1{,}024\}$.}
    \label{fig:bl_gravity_residual_bounds}
\end{figure}
 
\begin{figure}
    \centering
    \setlength{\tabcolsep}{2pt}
    \begin{tabular}{@{}cc@{}}
        \includegraphics[width=0.35\linewidth]{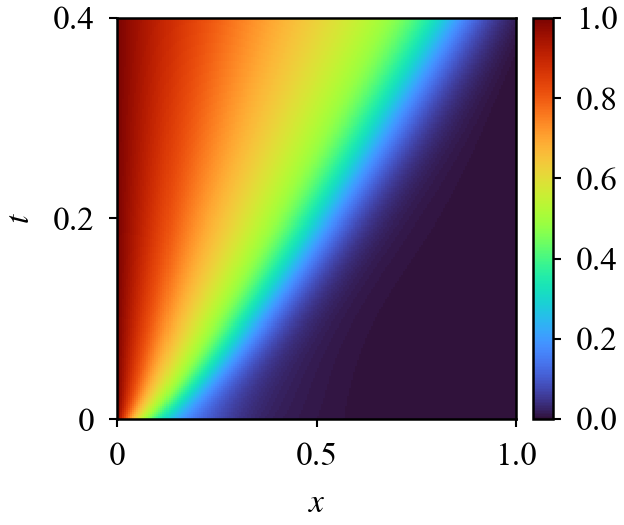} &
        \includegraphics[width=0.35\linewidth]{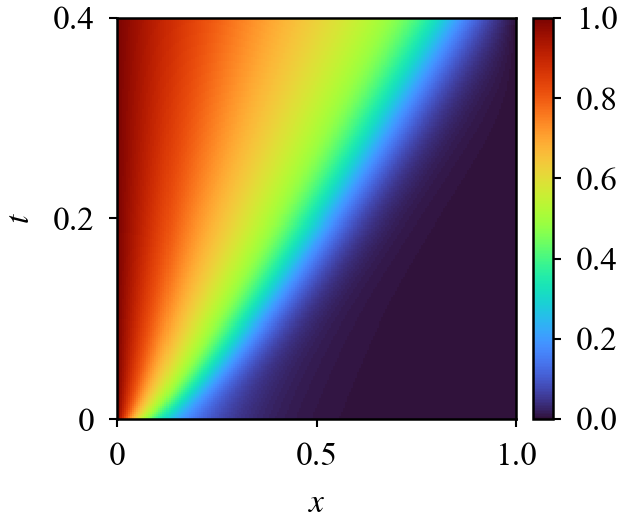} \\[-0.1cm]
        {\small (a) \textsc{NiL-N}, $N_g = 0$} &
        {\small (b) \textsc{LiL-Q}, $N_g = 0$} \\[0.2cm]
        \includegraphics[width=0.35\linewidth]{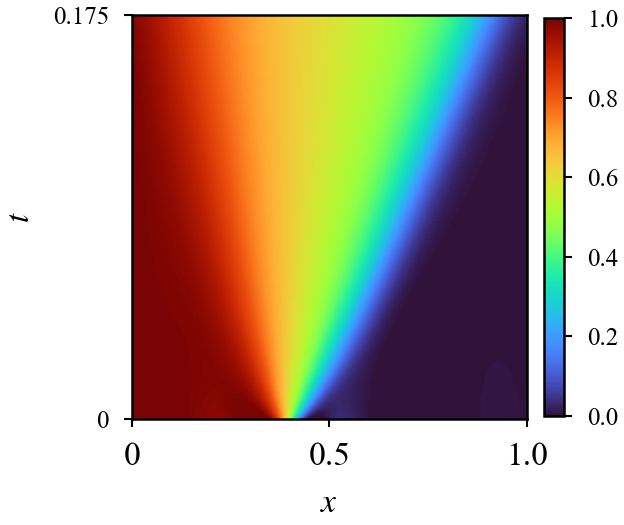} &
        \includegraphics[width=0.35\linewidth]{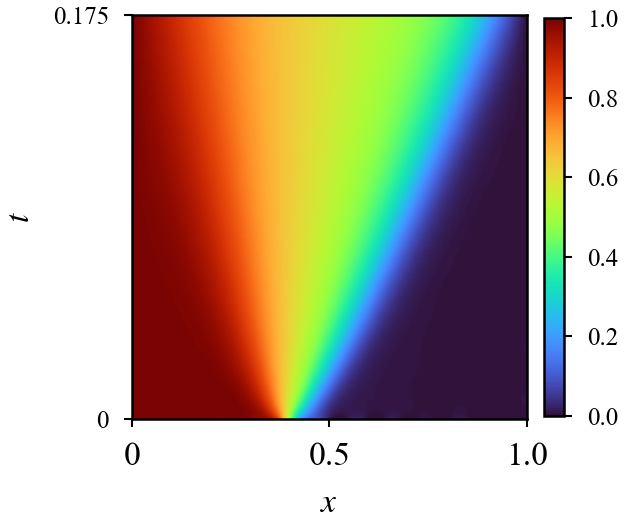} \\[-0.1cm]
        {\small (c) \textsc{NiL-N}, $N_g = -5$} &
        {\small (d) \textsc{LiL-Q}, $N_g = -5$}
    \end{tabular}
    \caption{Buckley-Leverett solutions at $P = 1{,}024$: \textsc{NiL-N} (left) and \textsc{LiL-Q} (right) for viscous displacement (top) and gravity-influenced displacement (bottom).}
    \label{fig:bl_comparison}
\end{figure}

\subsection{Coupled Linear Elasticity}
\label{sec:elasticity}

The remaining experiments extend the scope from scalar nonlinear PDEs to coupled systems and rough heterogeneity.
We consider the plane-strain Navier equations of linear elasticity on $\Omega = (0,1)^2$, seeking the displacement fields $u_x(x,y)$ and $u_y(x,y)$ satisfying,
\begin{equation}
\label{eq:elasticity_system}
(\lambda + 2\mu)\,\frac{\partial^2 u_x}{\partial x^2} + \mu\,\frac{\partial^2 u_x}{\partial y^2} + (\lambda + \mu)\,\frac{\partial^2 u_y}{\partial x\,\partial y} = -f_x,
\end{equation}
and
\begin{equation}
\label{eq:elasticity_system_2}
\mu\,\frac{\partial^2 u_y}{\partial x^2} + (\lambda + 2\mu)\,\frac{\partial^2 u_y}{\partial y^2} + (\lambda + \mu)\,\frac{\partial^2 u_x}{\partial x\,\partial y} = -f_y,
\end{equation}
where $\lambda$ and $\mu$ are the Lam\'{e} parameters and $f_x$, $f_y$ are prescribed body forces.
The coupling between the two displacement fields enters through the mixed derivative $(\lambda + \mu)\,\partial^2/\partial x\,\partial y$, which prevents the system from being solved as two independent scalar problems and gives rise to the block-structured collocation system described below.
To verify the solver against a known reference, we follow the method of manufactured solutions used in~\cite{Haghighat2021}, prescribing the displacement fields,
\begin{equation}
\label{eq:elasticity_exact}
u_x^*(x,y) = \cos(2\pi x)\,\sin(\pi y), \qquad
u_y^*(x,y) = \sin(\pi x)\,\frac{Q\,y^4}{4},
\end{equation}
with $\lambda = 1$, $\mu = 0.5$, and $Q = 4$, and computing the body forces by evaluating the differential operators on the left-hand side of~\eqref{eq:elasticity_system} at these displacements.
Following~\cite{Haghighat2021}, we apply mixed boundary conditions. The bottom face is clamped ($u_x = u_y = 0$), the top face imposes $u_x = 0$ together with a prescribed normal traction $\sigma_{yy} = (\lambda + 2\mu)\,Q\sin(\pi x)$, and on the lateral faces we impose zero normal traction ($\sigma_{xx} = 0$) together with $u_y = 0$.

Since~\eqref{eq:elasticity_system} is linear in the displacements, no quasilinearization is needed in LiL methods and the system is solved directly in a single solve.
Each displacement field is approximated by an independent expansion with its own basis, i.e.,
\begin{equation}
\label{eq:elasticity_lil}
u_x(\mathbf{x}) \approx \sum_{p=1}^{P_u} \beta_p^{(u_x)}\,\phi_p^{(u_x)}(\mathbf{x})
\end{equation}
and,
\begin{equation}
\label{eq:elasticity_lil_2}
u_y(\mathbf{x}) \approx \sum_{q=1}^{P_v} \beta_q^{(u_y)}\,\phi_q^{(u_y)}(\mathbf{x}),
\end{equation}
and substituting these into~\eqref{eq:elasticity_system} at the collocation points produces a linear system with a block-column structure:
\begin{equation}
\label{eq:elasticity_block}
\begin{bmatrix}
\mathbf{A}_{x\text{-mom}}^{(u_x)} & \mathbf{A}_{x\text{-mom}}^{(u_y)} \\[4pt]
\mathbf{A}_{y\text{-mom}}^{(u_x)} & \mathbf{A}_{y\text{-mom}}^{(u_y)} \\[4pt]
\mathbf{A}_{\mathrm{BC}}^{(u_x)} & \mathbf{A}_{\mathrm{BC}}^{(u_y)}
\end{bmatrix}
\begin{bmatrix}
\boldsymbol{\beta}^{(u_x)} \\[4pt] \boldsymbol{\beta}^{(u_y)}
\end{bmatrix}
=
\begin{bmatrix}
\mathbf{f}_{x\text{-mom}} \\[4pt] \mathbf{f}_{y\text{-mom}} \\[4pt] \mathbf{f}_{\mathrm{BC}}
\end{bmatrix}.
\end{equation}
The off-diagonal blocks $\mathbf{A}_{x\text{-mom}}^{(u_y)}$ and $\mathbf{A}_{y\text{-mom}}^{(u_x)}$ arise from the coupling term and are what distinguishes this from two independent scalar solves.
The bases are chosen to align with the structure of the manufactured displacement fields. For $u_x^*$, we use tensor products of cosine functions in $x$ and sine functions in $y$, and for $u_y^*$, we use sine functions in $x$ and Chebyshev polynomials in $y$ (the latter representing the quartic $y^4$ exactly at degree $\geq 4$).
With these choices, the exact solution is contained in the span of the basis at every resolution tested, placing the experiment in the $\varepsilon_L = 0$ regime of Theorem~\ref{thm:stationary_residual}.
Experiments were conducted at five basis sizes with $p_d \in \{5, 10, 15, 20, 25\}$ modes per dimension per field, yielding total parameter counts $P \in \{50, 200, 450, 800, 1{,}250\}$ (since $P_u = P_v = p_d^2$ and $P = P_u + P_v$); the collocation grid uses a $10{:}1$ ratio of samples to parameters, with $85\%$ allocated to interior points and $15\%$ to boundary conditions.

Table~\ref{tab:elasticity_results} reports the relative $L_2$ errors and solve times.
At every basis size, the displacement errors are at $\mathcal{O}(10^{-16})$ and the stress errors (computed from the displacement derivatives) are at $\mathcal{O}(10^{-15})$ to $\mathcal{O}(10^{-16})$, confirming recovery to machine precision.
The PDE residual MSE ranges from $6.6 \times 10^{-29}$ at $P = 50$ to $6.1 \times 10^{-27}$ at $P = 1{,}250$. The slight increase with $P$ reflects accumulated round-off in the larger matrix operations rather than any degradation in solution quality.
This behavior is consistent with the conditioning analysis of Section~\ref{sec:properties} where we show that the condition number grows only modestly, from $\kappa\approx1.1\times10^{2}$ at $P=50$ to $\kappa\approx4.4\times10^{4}$ at $P=1{,}250$, and the collocation matrix remains full column rank at every $P$ (Section~\ref{sec:conditioning_results}).
The minimizer is therefore unique, and because the exact solution lies in the span of the basis ($\varepsilon_{L}=0$) while $\kappa\,\epsilon_{\mathrm{mach}}\approx10^{-11}$ stays far below any practical floor, the error remains at machine precision throughout.
Two distinct facts underlie this result and are worth separating.
First, the matrix is full rank and all $P$ basis functions contribute linearly independent collocation responses, so no direction is discarded.
Second, the \emph{solution} is sparse in this basis such that across all tested sizes, the number of coefficients exceeding $10^{-6}$ in magnitude remains constant at five modes per dimension per field, with every additional coefficient driven to zero by the least-squares fit.
Together these confirm the $\varepsilon_{L}=0$ regime of the theory. The basis spans the solution exactly with a handful of modes, and enriching it further adds well-conditioned but inactive degrees of freedom, leaving the accuracy unchanged.
Fig.~\ref{fig:elasticity_fields} displays the exact and predicted displacement fields alongside the pointwise absolute errors at $P = 50$. Even with only 25 basis functions per field and a QR solve completing in under 2~milliseconds, the predicted fields are visually indistinguishable from the analytical solution, with discrepancies uniformly at the level of floating-point arithmetic throughout the domain.

\begin{table}
\centering
\caption{Linear elasticity: relative $L_2$ errors and QR solve times across five basis sizes.
All errors are at machine precision, confirming the $\varepsilon_L = 0$ regime.}
\label{tab:elasticity_results}
\small
\begin{tabular}{@{} r r r c c c c c @{}}
\toprule
$p_d$ & $P$ & Time (s) & $\epsilon_{u_x}$ & $\epsilon_{u_y}$ & $\epsilon_{\sigma_{xx}}$ & $\epsilon_{\sigma_{yy}}$ & $\epsilon_{\sigma_{xy}}$ \\
\midrule
 5 &   50 & 0.007 & $6.5 \times 10^{-16}$ & $1.0 \times 10^{-15}$ & $2.4 \times 10^{-16}$ & $3.7 \times 10^{-16}$ & $8.6 \times 10^{-16}$ \\
10 &  200 & 0.029 & $7.1 \times 10^{-16}$ & $9.3 \times 10^{-16}$ & $4.3 \times 10^{-16}$ & $6.7 \times 10^{-16}$ & $9.0 \times 10^{-16}$ \\
15 &  450 & 0.106 & $1.2 \times 10^{-15}$ & $9.1 \times 10^{-16}$ & $1.1 \times 10^{-15}$ & $1.1 \times 10^{-15}$ & $1.4 \times 10^{-15}$ \\
20 &  800 & 0.283 & $1.6 \times 10^{-15}$ & $1.6 \times 10^{-15}$ & $1.4 \times 10^{-15}$ & $1.0 \times 10^{-15}$ & $1.2 \times 10^{-15}$ \\
25 & 1{,}250 & 1.03  & $3.0 \times 10^{-15}$ & $2.2 \times 10^{-15}$ & $2.6 \times 10^{-15}$ & $2.2 \times 10^{-15}$ & $2.3 \times 10^{-15}$ \\
\bottomrule
\end{tabular}
\end{table}

\begin{figure}
\centering
 \includegraphics[width=0.95\linewidth]{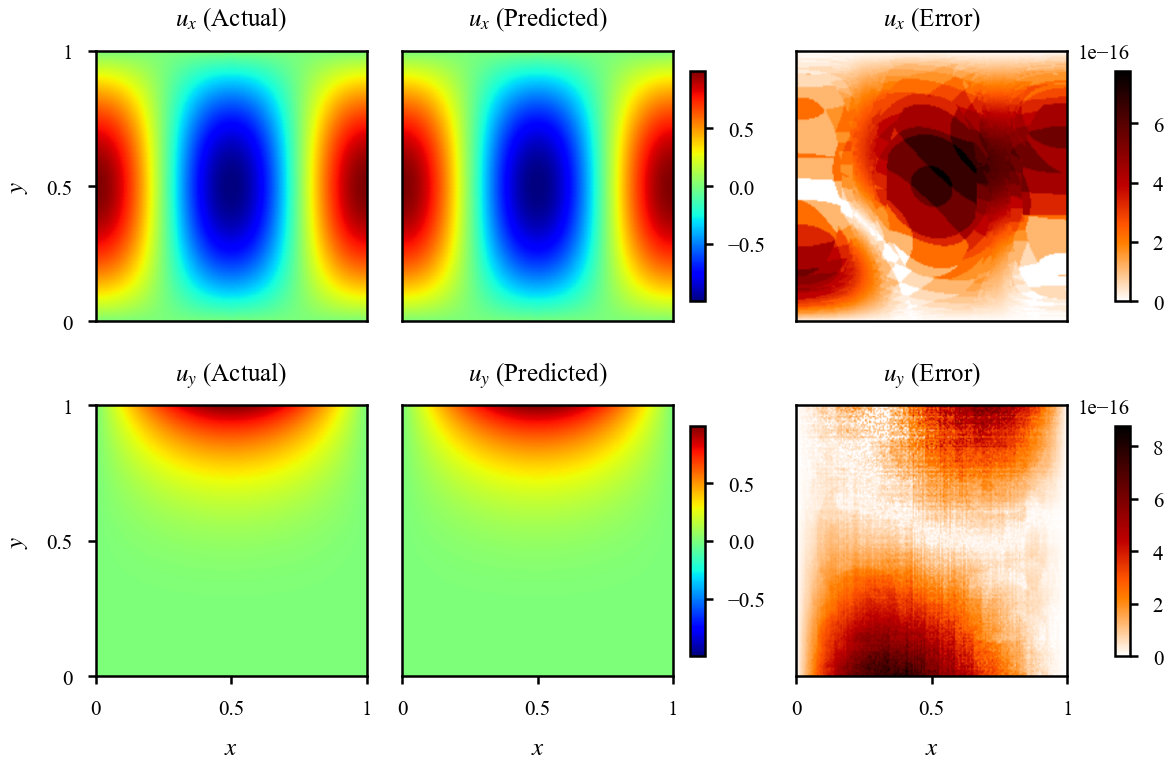}
\caption{Linear elasticity at $P = 50$: exact displacement fields (left), \textsc{LiL} predictions (center), and pointwise absolute errors (right).
Errors are at $\mathcal{O}(10^{-15})$ throughout the domain, consistent with machine-precision recovery.}
\label{fig:elasticity_fields}
\end{figure}

Table~\ref{tab:elasticity_comparison} places these results alongside the SciANN boundary-collocation PINN of~\cite{Haghighat2021}, which uses the same manufactured displacements~\eqref{eq:elasticity_exact}, material parameters, and mixed boundary conditions.
We emphasize that Table~\ref{tab:elasticity_comparison} is intended to provide only a qualitative comparative view.
The SciANN configuration trains five $4\!\times\!40$ \texttt{tanh} networks for the two displacements and three stress components, together with two learnable Lam\'{e} parameters, for a total of $25{,}407$ trainable weights; training runs for $2{,}872$ Adam epochs and $361$~s of wall-clock time.
This is a mixed forward/inverse formulation (stresses and material parameters are learned jointly with displacements), so the training objectives are not directly comparable to the displacement-only least-squares solve in \textsc{LiL}.
Post-training evaluation on the same $200\times200$ grid yields relative $L_2$ displacement errors $\epsilon_{u_x} \approx 2.8\times 10^{-2}$ and $\epsilon_{u_y} \approx 4.5\times 10^{-2}$.
Even at $P = 50$, \textsc{LiL} recovers both displacement fields to $\mathcal{O}(10^{-15})$ in a single QR solve, with a total wall time (including collocation assembly) of $0.07$~s.
At $P = 1{,}250$, which still uses roughly 20 times fewer degrees of freedom than SciANN, the total wall time is $4.11$~s, roughly $90$ times shorter, while the accuracy advantage exceeds 13 orders of magnitude.
 
\begin{table}
\centering
\caption{Linear elasticity: comparison with the published SciANN PINN solver~\cite{Haghighat2021}.
Same manufactured solution, material parameters, and boundary conditions.
\textsc{LiL} runtimes include collocation assembly and QR solve.
A dash indicates unreported data.}
\label{tab:elasticity_comparison}
\small
\begin{tabular}{@{} l r l r c c @{}}
\toprule
Method & $P$ & Training & Time (s) & $\epsilon_{u_x}$ & $\epsilon_{u_y}$ \\
\midrule
SciANN (PINN)~\cite{Haghighat2021}
    & 25{,}407 & 2{,}872 Adam epochs & 361
    & $2.8 \times 10^{-2}$ & $4.5 \times 10^{-2}$ \\
\midrule
    & 50    & 1 QR solve & 0.07
    & $6.5 \times 10^{-16}$ & $1.0 \times 10^{-15}$ \\[3pt]
\textsc{LiL}
    & 800   & 1 QR solve & 1.82
    & $1.6 \times 10^{-15}$ & $1.6 \times 10^{-15}$ \\[3pt]
    & 1{,}250 & 1 QR solve & 4.11
    & $3.0 \times 10^{-15}$ & $2.2 \times 10^{-15}$ \\
\bottomrule
\end{tabular}
\end{table}

\subsection{Kovasznay Flow}
\label{sec:kovasznay}

Next, we consider the Kovasznay flow 2D steady solution to the incompressible Navier-Stokes equations governed by
\begin{equation}
\label{eq:kovasznay_ns}
\begin{aligned}
u\,\frac{\partial u}{\partial x} + v\,\frac{\partial u}{\partial y} &= -\frac{\partial p}{\partial x} + \frac{1}{Re}\left(\frac{\partial^2 u}{\partial x^2} + \frac{\partial^2 u}{\partial y^2}\right), \\[4pt]
u\,\frac{\partial v}{\partial x} + v\,\frac{\partial v}{\partial y} &= -\frac{\partial p}{\partial y} + \frac{1}{Re}\left(\frac{\partial^2 v}{\partial x^2} + \frac{\partial^2 v}{\partial y^2}\right), \\[4pt]
\frac{\partial u}{\partial x} + \frac{\partial v}{\partial y} &= 0,
\end{aligned}
\end{equation}
where $u(x,y)$ and $v(x,y)$ are the velocity components, and $p(x,y)$ is the pressure.
Its analytical solution is
\begin{equation}
\label{eq:kovasznay_exact}
\begin{aligned}
u &= 1 - e^{\zeta x}\cos(2\pi y), \\[4pt]
v &= \frac{\zeta}{2\pi}\,e^{\zeta x}\sin(2\pi y), \\[4pt]
p &= \tfrac{1}{2}(1 - e^{2\zeta x}),
\end{aligned}
\end{equation}
where $\zeta = \mathrm{Re}/2 - \sqrt{\mathrm{Re}^2/4 + 4\pi^2}$.
We set $\mathrm{Re} = 40$ on the domain $\Omega = [-0.5, 1] \times [-0.5, 1.5]$ and impose Dirichlet boundary conditions from the analytical solution on all four faces.
This problem has been widely used to benchmark PINN-based Navier-Stokes solvers~\cite{Jin2021, Lee2024, Gu2024}.

Unlike the linear elasticity system of Section~\ref{sec:elasticity}, the quadratic convective terms in~\eqref{eq:kovasznay_ns} introduce a nonlinearity that requires the Bellman-Kalaba quasilinearization.
At each outer iteration $k$, the products of unknowns are replaced by first-order Taylor expansions about the current iterate, yielding a system that is linear in $(u^{(k+1)}, v^{(k+1)}, p^{(k+1)})$ with coefficients that depend on $(u^{(k)}, v^{(k)})$.
Each field is expanded in a tensor-product Chebyshev basis with $p_d$ modes per spatial dimension, giving $P = 3\,p_d^2$ total trainable parameters, and the resulting collocation system has the same block-column structure as~\eqref{eq:elasticity_block} but with three column blocks (for $u$, $v$, $p$) and row blocks from the $x$-momentum, $y$-momentum, continuity, and boundary equations.
Each linearized subproblem is solved via QR factorization, and the process is repeated until the coefficient update norm falls below a prescribed tolerance.
Experiments were conducted at $p_d \in \{5, 10, 15, 20, 25\}$, corresponding to $P \in \{75, 300, 675, 1{,}200, 1{,}875\}$, with collocation points sampled at $80\%$ interior, $10\%$ boundary, and $10\%$ continuity at a $2{:}1$ sample-to-parameter ratio per field. The pressure is pinned at one corner to remove the gauge ambiguity.
 
\begin{table}
\centering
\caption{Kovasznay flow ($\mathrm{Re} = 40$): outer iterations, wall-clock time, and relative $L_2$ errors across five basis sizes.}
\label{tab:kovasznay_results}
\small
\begin{tabular}{@{} r r r r c c c @{}}
\toprule
$p_d$ & $P$ & Iters & Time (s) & $\epsilon_{u}$ & $\epsilon_{v}$ & $\epsilon_{p}$ \\
\midrule
5  & 75     & 16 & 0.06  & $3.8 \times 10^{-1}$  & $1.2 \times 10^{0\phantom{-}}$   & $8.8 \times 10^{-1}$ \\
10 & 300    & 10 & 0.22  & $2.9 \times 10^{-2}$  & $1.3 \times 10^{-1}$  & $4.8 \times 10^{-1}$ \\
15 & 675    &  6 & 0.67  & $2.0 \times 10^{-5}$  & $1.3 \times 10^{-4}$  & $8.1 \times 10^{-4}$ \\
20 & 1{,}200 &  6 & 1.87  & $7.2 \times 10^{-9}$  & $2.6 \times 10^{-8}$  & $1.4 \times 10^{-7}$ \\
25 & 1{,}875 &  6 & 5.58  & $7.2 \times 10^{-13}$ & $5.3 \times 10^{-12}$ & $1.3 \times 10^{-11}$ \\
\bottomrule
\end{tabular}
\end{table}

Table~\ref{tab:kovasznay_results} summarizes the results.
At $P = 75$, the Chebyshev basis lacks the resolution to represent the exponential boundary layer in the Kovasznay solution, and the errors remain $\mathcal{O}(1)$ despite convergence of the quasilinearization.
As $P$ increases, the solution errors decrease rapidly, reaching $\mathcal{O}(10^{-13})$ at $P = 1{,}875$ in 5.6~seconds.
The outer iteration count stabilizes at 6 for $P \geq 675$, consistent with the convergence rate behavior observed in the scalar experiments.
The exact and predicted fields at $P = 1{,}875$, displayed in Fig.~\ref{fig:kovasznay_fields}, are visually indistinguishable, with pointwise errors at $\mathcal{O}(10^{-12})$ for the velocity components and $\mathcal{O}(10^{-11})$ for the pressure.

\begin{figure}
\centering
\includegraphics[width=0.95\linewidth]{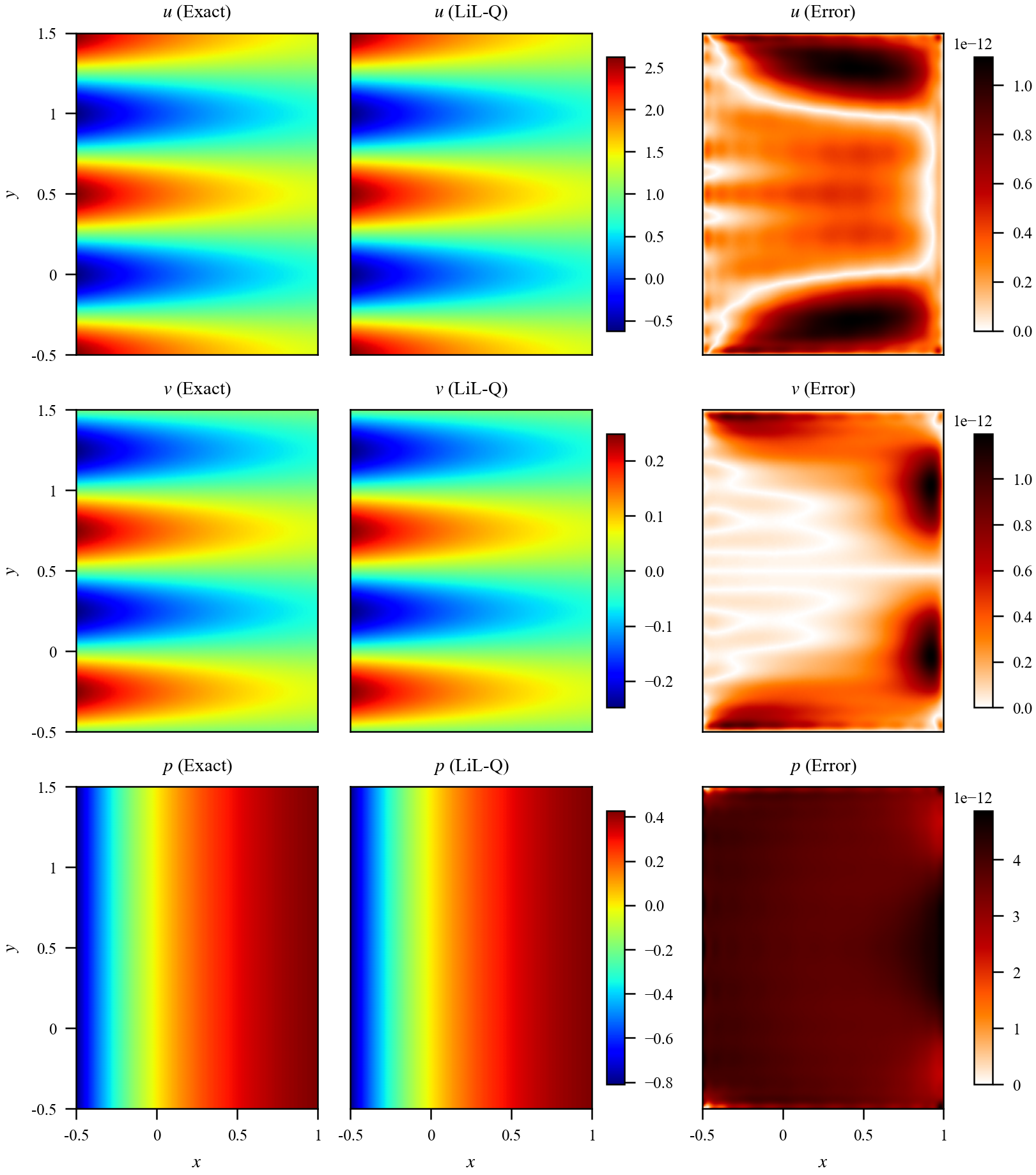}
\caption{Kovasznay flow at $P = 1{,}875$: exact fields (left), \textsc{LiL-Q} predictions (center), and pointwise absolute errors (right) for $u$, $v$, and $p$.}
\label{fig:kovasznay_fields}
\end{figure}

Table~\ref{tab:kovasznay_comparison} places these results alongside published PINN solutions for the same problem at $\mathrm{Re} = 40$. We emphasize that Table~\ref{tab:kovasznay_comparison} is intended to only provide a qualitative comparative view.
The VV-NSFnet of Jin et al.~\cite{Jin2021} achieves the closest velocity accuracy among the baselines ($\epsilon_u = 2.4 \times 10^{-5}$), but requires ${\sim}90{,}000$ trainable parameters and over $3 \times 10^4$ gradient iterations. \textsc{LiL-Q} at $P = 675$ matches this accuracy ($\epsilon_u = 2.0 \times 10^{-5}$) with 133 times fewer parameters, 6 outer iterations, and 0.67~seconds of wall-clock time.
Among Lee's~\cite{Lee2024} configurations, the ADA-F ($4\!\times\!60$) yields $\epsilon_u = 7.2 \times 10^{-5}$ using ${\sim}11{,}000$ parameters. \textsc{LiL-Q} at $P = 675$ produces a $3.6\times$ smaller velocity error with roughly $16\times$ fewer parameters and in 6 outer iterations. The reported wall-clock times differ by a factor of order $10^{3}$ (4{,}089~s versus 0.67~s), though this figure compares results obtained on different hardware and we report it only as an indication of the magnitude of the difference.
At $P = 1{,}875$, the errors drop to $\mathcal{O}(10^{-12})$ across all field variables, exceeding every reported baseline by three to four orders of magnitude in 5.58~seconds.
 
\begin{table}
\centering
\caption{Kovasznay flow ($\mathrm{Re} = 40$): Summary of published PINN solver results.
Each row presents a reported configuration from the corresponding reference.
A dash indicates unreported data.}
\label{tab:kovasznay_comparison}
\small
\begin{tabular}{@{} l r l r c c c @{}}
\toprule
Method & $P$ & Training & Time (s) & $\epsilon_{u}$ & $\epsilon_{v}$ & $\epsilon_{p}$ \\
\midrule
VV-NSFnet~\cite{Jin2021}
    & ${\sim}$90{,}000  & 30K Adam + L-BFGS  & --
    & $2.4 \times 10^{-5}$ &  $1.4 \times 10^{-4}$ & -- \\[3pt]
DD-PINN~\cite{Gu2024}
    & ${\sim}$32{,}000  & 30K Adam + L-BFGS      & --
    & $8.2 \times 10^{-5}$ & $7.1 \times 10^{-4}$ & $3.0 \times 10^{-4}$ \\[3pt]
PINN~\cite{Lee2024}
    & ${\sim}$11{,}300  & Adam + L-BFGS      & $1174.8$
    & $9.7 \times 10^{-5}$ & $1.1 \times 10^{-3}$ & $2.8 \times 10^{-4}$ \\[3pt]
ADA-F ($2 \times 20$)~\cite{Lee2024}
    & $547$  & Adam + L-BFGS      & $251.7$
    & $1.4 \times 10^{-4}$ & $9.7 \times 10^{-4}$ & $2.9 \times 10^{-4}$ \\[3pt]
ADA-F ($4 \times 60$)~\cite{Lee2024}
    & ${\sim}$11{,}000  & Adam + L-BFGS      & $4089.4$
    & $7.2 \times 10^{-5}$ & $5.4 \times 10^{-4}$ & $1.4 \times 10^{-4}$ \\
\midrule
    & 675               & 6 QR solves        & 0.67
    & $2.0 \times 10^{-5}$ & $1.3 \times 10^{-4}$ & $8.1 \times 10^{-4}$ \\[3pt]
\textsc{LiL-Q}
    & 1{,}200           & 6 QR solves        & 1.87
    & $7.2 \times 10^{-9}$ & $2.6 \times 10^{-8}$ & $1.4 \times 10^{-7}$ \\[3pt]
    & 1{,}875           & 6 QR solves        & 5.58
    & $7.2 \times 10^{-13}$ & $5.3 \times 10^{-12}$ & $1.3 \times 10^{-11}$ \\
\bottomrule
\end{tabular}
\end{table}

\subsection{3D Beltrami Flow}
\label{sec:beltrami}
 
We consider the unsteady three-dimensional Beltrami flow with an exact solution to the incompressible Navier-Stokes equations on $\Omega = [-1,1]^3$ over $t \in [0,1]$~\cite{Jin2021, Gu2024}.
The velocity fields are
\begin{equation}
\label{eq:beltrami_vel}
\begin{aligned}
u &= -a\bigl[\exp(ax)\sin(ay+dz) + \exp(az)\cos(ax+dy)\bigr]\exp(-d^2 t), \\
v &= -a\bigl[\exp(ay)\sin(az+dx) + \exp(ax)\cos(ay+dz)\bigr]\exp(-d^2 t), \\
w &= -a\bigl[\exp(az)\sin(ax+dy) + \exp(ay)\cos(az+dx)\bigr]\exp(-d^2 t),
\end{aligned}
\end{equation}
with pressure
\begin{equation}
\label{eq:beltrami_p}
\begin{split}
p = -\tfrac{1}{2}a^2\bigl[&\exp(2ax)+\exp(2ay)+\exp(2az) \\
  &+ 2\sin(ax+dy)\cos(az+dx)\exp\!\bigl(a(y+z)\bigr) \\
  &+ 2\sin(ay+dz)\cos(ax+dy)\exp\!\bigl(a(z+x)\bigr) \\
  &+ 2\sin(az+dx)\cos(ay+dz)\exp\!\bigl(a(x+y)\bigr)\bigr]\exp(-2d^2 t),
\end{split}
\end{equation}
where $a = d = 1$.
Dirichlet conditions from the analytical solution are imposed on all six faces and at $t = 0$.
This problem is substantially more demanding than the Kovasznay flow as it involves four spatiotemporal dimensions, four coupled field variables, and a pressure field whose exponential terms produce a dynamic range considerably wider than that of the velocity fields.
 
The \textsc{LiL-Q} formulation follows the same quasilinearization approach as in Section~\ref{sec:kovasznay}, with the block-column structure now comprising four column blocks (for $u$, $v$, $w$, $p$) and row blocks from three momentum equations, the continuity equation, boundary conditions on six faces, and the initial condition.
Each field is expanded in a 4D tensor-product Chebyshev basis. To accommodate the steeper pressure gradients, the pressure basis is enriched relative to the velocity bases.
We assign $N_{\mathrm{vel}} = 6$ modes per dimension for each velocity component ($P_{\mathrm{vel}} = 6^4 = 1{,}296$ per field) and $N_p = 8$ for pressure ($P_p = 8^4 = 4{,}096$), giving $P = 3 \times 1{,}296 + 4{,}096 = 7{,}984$ total parameters.
The collocation system comprises 4{,}096 interior points (each contributing four equation rows), 1{,}296 boundary points on the six faces (three velocity conditions each), 512 initial-condition points (three fields), and one pressure-pin row, yielding 21{,}809 rows for 7{,}984 unknowns.

The quasilinearization converges in 4 outer iterations with a total wall-clock time of approximately 5~minutes on CPU, dominated by four QR factorizations of the $21{,}809 \times 7{,}984$ system.
 
\begin{table}
\centering
\caption{Beltrami flow: relative $L_2$ errors (\%) at five time snapshots and the entire domain.
\textsc{LiL-Q} with $P = 7{,}984$ parameters, converged in 4 outer iterations (wall time $\approx 5$~min on CPU).}
\label{tab:beltrami_results}
\small
\begin{tabular}{@{} l c c c c c c @{}}
\toprule
 & $t = 0.00$ & $t = 0.25$ & $t = 0.50$ & $t = 0.75$ & $t = 1.00$ & Entire domain \\
\midrule
$\epsilon_u$(\%) & 0.014 & 0.041 & 0.037 & 0.038 & 0.034 & 0.0345 \\
$\epsilon_v$(\%) & 0.014 & 0.041 & 0.037 & 0.038 & 0.034 & 0.0345 \\
$\epsilon_w$(\%) & 0.014 & 0.041 & 0.037 & 0.038 & 0.034 & 0.0345 \\
$\epsilon_p$(\%) & 0.146 & 0.282 & 0.315 & 0.451 & 0.752 & 0.2610 \\
\bottomrule
\end{tabular}
\end{table}
 
Table~\ref{tab:beltrami_results} reports the relative $L_2$ errors at five time snapshots.
The velocity errors range from 0.014\% to 0.041\% and are identical across the three components to four significant figures. This is a consequence of the cyclic symmetry of~\eqref{eq:beltrami_vel} combined with the identical basis and collocation grids used for all three velocity fields.
The pressure errors are higher (0.15 to 0.75\%), consistent with the wider dynamic range of~\eqref{eq:beltrami_p} and the fact that pressure is not directly constrained by boundary or initial data but inferred through the incompressibility constraint.
Fig.~\ref{fig:beltrami_3d} shows the velocity and pressure fields at $t = 1.0$ on the $z = 0$ slice, confirming visual agreement between the exact and predicted solutions.
 
\begin{figure}
\centering
\includegraphics[width=0.85\linewidth]{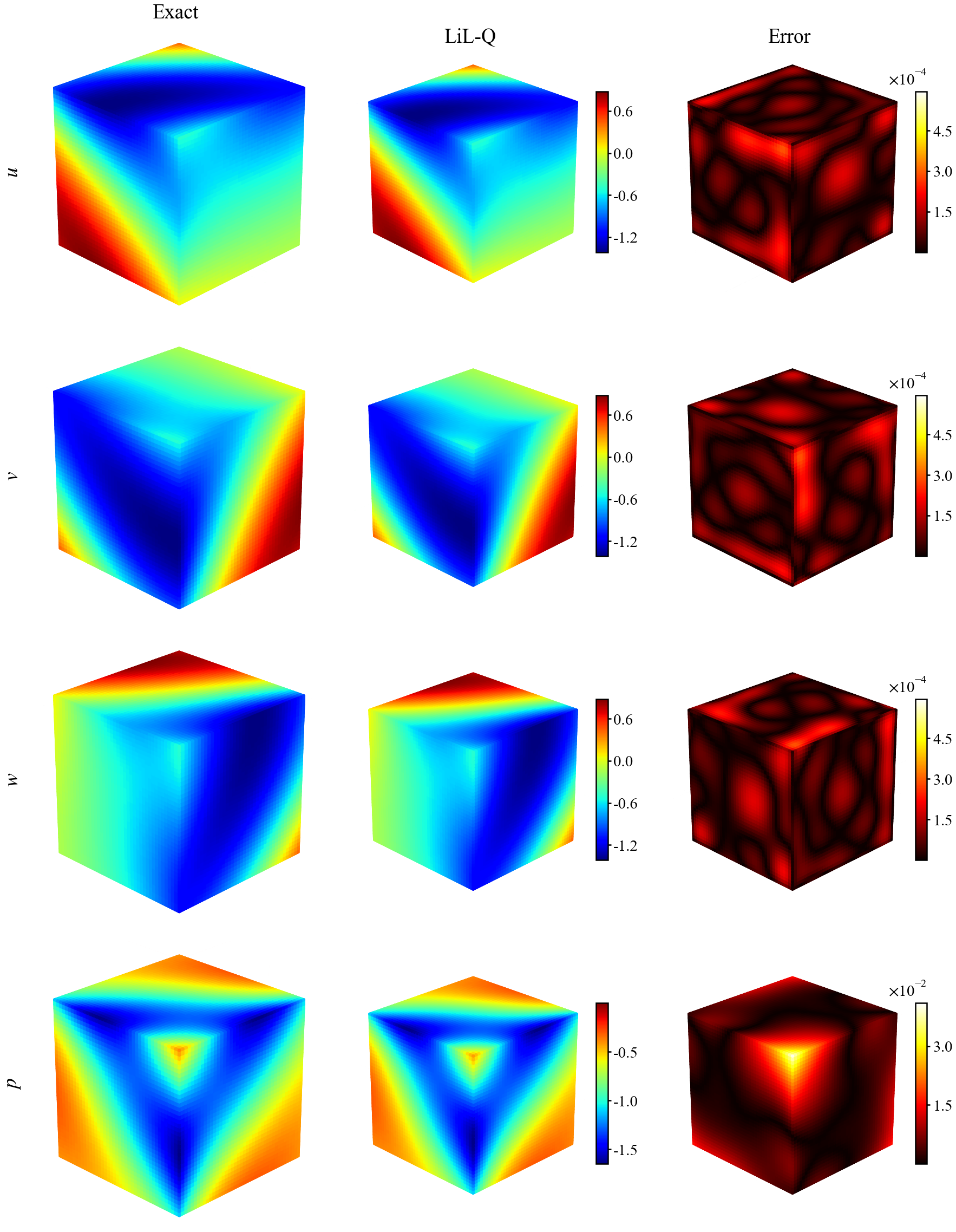}
\caption{Beltrami flow at $t = 1$: exact fields (left), \textsc{LiL-Q} predictions (middle), and pointwise absolute errors (right) for $u$, $v$, $w$, and $p$.}
\label{fig:beltrami_3d}
\end{figure}

Table~\ref{tab:beltrami_comparison} presents these results alongside published PINN implementations for the same problem, evaluated at $t = 1.0$.
Jin et al.~\cite{Jin2021} report two formulations at a comparable parameter count (${\sim}8{,}000$): the VP-NSFnet, which recovers velocity to 0.16 - 0.22\% but yields pressure errors of 8.93\%, and the VV-NSFnet, which achieves velocity errors of 0.041 to 0.045\%.
At the same parameter count (7{,}984), \textsc{LiL-Q} produces velocity errors of 0.034\% (slightly below the VV-NSFnet) while recovering pressure to 0.75\%, an order of magnitude below the VP-NSFnet. The 4 QR factorizations complete in approximately 5~minutes, replacing $3 \times 10^4$ gradient iterations.
Gu et al.~\cite{Gu2024} employ a domain-decomposed architecture with ${\sim}126{,}000$ total parameters, reporting velocity errors of 0.16--0.21\% and a pressure error of 2.11\%. \textsc{LiL-Q} achieves 5 to 6 times lower velocity error and 2.8 times lower pressure error with 16 times fewer parameters.
 
\begin{table}
\centering
\caption{Beltrami flow at $t = 1.0$: comparison with published PINN solvers.
Each row reports the best configuration from the corresponding reference.
Parameter counts for domain-decomposed methods are totals across all subdomains.
A dash indicates unreported data.}
\label{tab:beltrami_comparison}
\small
\begin{tabular}{@{} l r l r c c c c @{}}
\toprule
Method & $P$ & Training
    & $\epsilon_{u}$(\%) & $\epsilon_{v}$(\%) & $\epsilon_{w}$(\%) & $\epsilon_{p}$(\%) \\
\midrule
VP-NSFnet~\cite{Jin2021}
    & ${\sim}$8{,}000  & 30K Adam + L-BFGS  & 0.163 & 0.219 & 0.178 & 8.93 \\[3pt]
VV-NSFnet~\cite{Jin2021}
    & ${\sim}$8{,}000  & 30K Adam + L-BFGS  & 0.044 & 0.045 & 0.041 & -- \\[3pt]
DD-PINN~\cite{Gu2024}
    & ${\sim}$126{,}000 & 30K Adam + L-BFGS      & 0.162 & 0.190 & 0.213 & 2.11 \\[3pt]
\textsc{LiL-Q} (this work)
    & 7{,}984           & 4 QR solves        & 0.034 & 0.034 & 0.034 & 0.75 \\
\bottomrule
\end{tabular}
\end{table}

We note that this is the most severely conditioned problem in the study. At $P=7984$ the collocation matrix has $\kappa\approx4\times10^{17}$ and a numerical rank deficit of seven, so the accuracy reported here relies on the rank-revealing column-pivoted QR solve (Section~\ref{sec:conditioning_results}).


\subsection{Single-Phase Darcy Flow}
\label{sec:single_phase_darcy}

Steady-state single-phase flow through porous media with spatially heterogeneous permeability is governed by the elliptic pressure equation
\begin{equation}
\label{eq:darcy_pressure}
\nabla \cdot \left( \frac{K(\mathbf{x})}{\mu} \nabla P(\mathbf{x}) \right) = 0, \qquad \mathbf{x} \in \Omega,
\end{equation}
where $K(\mathbf{x})$ is the permeability field, $\mu$ is viscosity, and $P(\mathbf{x})$ is the pressure.
This equation is the computational kernel of reservoir simulation, appearing as the pressure solve within sequential implicit methods for multiphase flow~\cite{AzizSettari1979}.
Since~\eqref{eq:darcy_pressure} is linear in $P$ for any fixed $K$, there is no physical nonlinearity and quasilinearization is not required. The \textsc{NiL-Q} and \textsc{LiL-Q} formulations reduce to \textsc{NiL} and \textsc{LiL}, respectively, so any performance gap between the two is attributable entirely to architectural nonlinearity.
The experiment includes a finite volume method (FVM) solution as an independent numerical reference~\cite{AzizSettari1979}.

We adopt a mixed velocity-pressure formulation, solving simultaneously for a dimensionless pressure head $h^* = (P - P_{\mathrm{bot}})/\Delta P$ and velocity components $u^*$, $v^*$ on a rectangular reservoir of dimensions $L_x \times L_y = 1{,}200 \times 2{,}200$~ft, discretized on a $60 \times 220$ cell grid, with pressure boundary conditions on the top and bottom faces and no-flow conditions on the lateral boundaries.
Mapping coordinates to the unit square via $x^* = x/L_x$, $y^* = y/L_y$ yields the dimensionless governing equations
\begin{equation}
\label{eq:darcy_nondim}
\begin{aligned}
u^* + K^* \mathcal{R}\, \frac{\partial h^*}{\partial x^*} &= 0, & &\text{(Darcy-}x\text{)} \\[4pt]
v^* + K^* \frac{\partial h^*}{\partial y^*} &= 0, & &\text{(Darcy-}y\text{)} \\[4pt]
\mathcal{R}\, \frac{\partial u^*}{\partial x^*} + \frac{\partial v^*}{\partial y^*} &= 0, & &\text{(Continuity)}
\end{aligned}
\end{equation}
where $\mathcal{R} = L_y/L_x$ is the domain aspect ratio and $K^* = K/K_0$ is the normalized permeability. Both the aspect-ratio scaling and permeability normalization were introduced to improve conditioning after observing anisotropic convergence during preliminary experiments, and were applied uniformly to the \textsc{NiL} and \textsc{LiL} implementations.

For the \textsc{LiL} formulation, a lifting function decomposition $h^* = y^* + \tilde{h}^*$ satisfies the Dirichlet pressure conditions exactly, with the correction $\tilde{h}^*$ expanded in Fourier activations that vanish at $y^* = 0$ and $y^* = 1$.
Each field variable uses a Fourier network whose activation functions are chosen to satisfy its respective boundary conditions by construction: the pressure correction uses $\cos(n\pi x^*)\sin(m\pi y^*)$ (cosine for Neumann laterally, sine for Dirichlet vertically). The $x$-velocity uses $\sin(n\pi x^*)$ spatially (enforcing no-flow at the lateral boundaries) with augmented cosine modes in $y^*$. The $y$-velocity uses augmented cosine modes in both directions, as no boundary conditions are imposed on it directly.
With 32 Fourier modes per direction per field, the total \textsc{LiL} parameter count across all three variables is $P = 3{,}169$.
Collocation points are placed at cell centers of the permeability grid, and the resulting overdetermined system is solved via QR factorization in a single pass.
The experiments are conducted across four permeability realizations of increasing heterogeneity. Three synthetic fields (S1, S2, S3) are generated via Karhunen-Lo\`eve expansion of a Gaussian covariance kernel with increasing $\ln K$ standard deviation, and the third case is a 2D slice of the SPE10 benchmark dataset~\cite{spe10}.
Fig.~\ref{fig:perm_fields} shows the four permeability fields. Table~\ref{tab:darcy_training} summarizes the training configurations for both methods.

\begin{table}
\centering
\caption{Training configurations for the single-phase Darcy flow experiments.
Three separate networks are used for $h^*$, $u^*$, and $v^*$ in the \textsc{NiL} formulation; the reported parameter count is the total across all three.}
\label{tab:darcy_training}
\small
\begin{tabular}{@{}llll@{}}
\toprule
\multicolumn{2}{@{}l}{\textit{\textsc{NiL} configuration}} & \multicolumn{2}{l}{\textit{\textsc{LiL} configuration}} \\
\cmidrule(lr){1-2} \cmidrule(lr){3-4}
Hidden layers       & 2                    & Modes per direction & 32 \\
Neurons per layer   & 32                   & Total parameters    & 3{,}169 \\
Activation          & $\tanh$ / SiLU       & Solver              & QR (single pass) \\
Output ($h^*$)      & sigmoid              & --- & --- \\
Total parameters    & 3{,}555              & --- & --- \\
Optimizer           & Adam, StepLR         & --- & --- \\
Iterations          & 150{,}000            & --- & --- \\
\midrule
\multicolumn{4}{@{}l}{\textit{Shared:} $K$ normalization: $K_0 = 1$ (synthetic), $K_0 = K_{\mathrm{geo}}$ (SPE10); grid: $60 \times 220$ cells.} \\
\bottomrule
\end{tabular}
\end{table}

\begin{figure}
    \centering
    \includegraphics[width=1\linewidth]{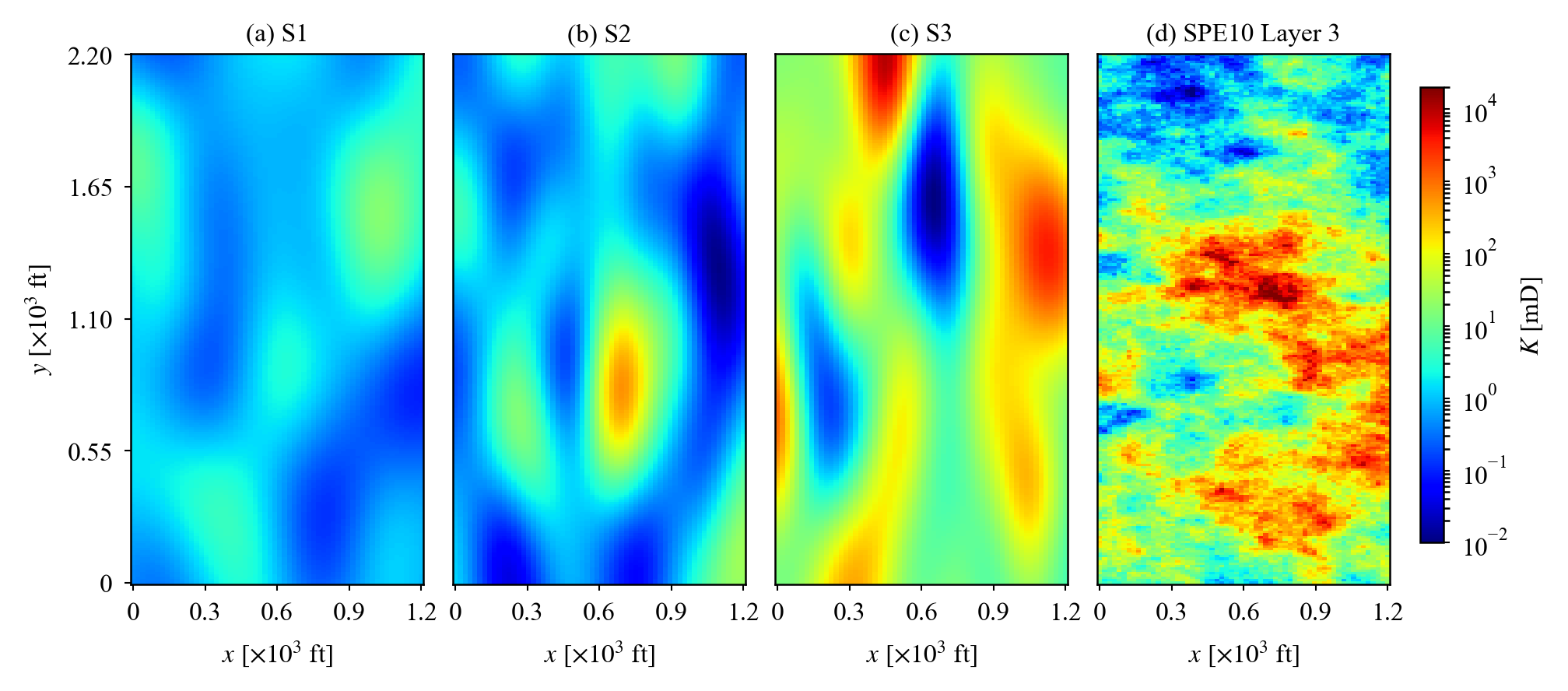}
    \caption{Permeability fields ($\log_{10} K$ in mD) for the four test cases, with heterogeneity contrast increasing from left to right.}
    \label{fig:perm_fields}
\end{figure}

Table~\ref{tab:darcy_results} reports PDE residual MSE and solve times, and Fig.~\ref{fig:pressure_fields2} compares the resulting pressure fields across all four realizations.
The \textsc{LiL} solutions are obtained from a single QR solve completing in approximately 24~s, while the \textsc{NiL} solutions require 150{,}000 gradient iterations over 960 to 1{,}579~s, despite comparable parameter counts ($P = 3{,}555$ for \textsc{NiL} versus $P = 3{,}169$ for \textsc{LiL}).
For the moderate-heterogeneity fields (S1, S2), \textsc{LiL} achieves Darcy residuals several orders of magnitude lower than \textsc{NiL} while completing in a fraction of the time.
For S3, the Darcy residuals are comparable between the two methods, and all three solutions capture the qualitative pressure distribution. This case marks the onset of the regime where increased heterogeneity in the coefficient field begins to limit the accuracy achievable with smooth global bases.
On the SPE10 field, \textsc{NiL} produces Darcy residuals an order of magnitude larger than \textsc{LiL}. The \textsc{NiL} runtime is also substantially longer (1{,}579~s versus 24~s), though, as elsewhere, this cross-hardware wall-clock comparison is indicative rather than definitive and is conservative for \textsc{LiL-Q}, which runs on CPU.
Across all four fields, the \textsc{LiL} boundary conditions are satisfied to machine precision by construction, whereas \textsc{NiL} boundary residuals are small but non-zero. The FVM solves each problem in approximately 0.1~s.

\begin{table}
\centering
\caption{PDE residual MSE and solve times for single-phase Darcy flow.
\textsc{LiL}: 32 Fourier modes per direction ($P = 3{,}169$).
\textsc{NiL}: 2 hidden layers, 32 neurons ($P = 3{,}555$), 150{,}000 iterations.}
\label{tab:darcy_results}
\small
\begin{tabular}{@{}llrrrr@{}}
\toprule
Case & Method & Darcy-$x$ MSE & Darcy-$y$ MSE & Continuity MSE & Runtime (s) \\
\midrule
\multirow{3}{*}{S1}
  & \textsc{NiL} & $8.41 \times 10^{-1}$ & $9.48$ & $1.38 \times 10^{-10}$ & 960.5 \\
  & \textsc{LiL} & $2.20 \times 10^{-5}$ & $9.47 \times 10^{-5}$ & $1.67 \times 10^{-12}$ & 24 \\
  & FVM          & ---                    & ---                    & $2.79 \times 10^{-7}$ & 0.1 \\
\midrule
\multirow{3}{*}{S2}
  & \textsc{NiL} & $3.43 \times 10^{-2}$ & $4.21 \times 10^{-2}$ & $8.05 \times 10^{-10}$ & 960.7 \\
  & \textsc{LiL} & $6.25 \times 10^{-5}$ & $1.45 \times 10^{-4}$ & $5.47 \times 10^{-12}$ & 23 \\
  & FVM          & ---                    & ---                    & $6.57 \times 10^{-7}$ & 0.1 \\
\midrule
\multirow{3}{*}{S3}
  & \textsc{NiL} & $3.32 \times 10^{-2}$ & $2.04 \times 10^{-1}$ & $4.31 \times 10^{-9}$ & 966.1 \\
  & \textsc{LiL} & $1.97 \times 10^{-1}$ & $1.36 \times 10^{-1}$ & $1.30 \times 10^{-8}$ & 23 \\
  & FVM          & ---                    & ---                    & $1.34 \times 10^{-3}$ & 0.1 \\
\midrule
\multirow{3}{*}{SPE10}
  & \textsc{NiL} & $24.98$ & $94.69$ & $2.97 \times 10^{-7}$ & 1{,}579 \\
  & \textsc{LiL} & $7.11 \times 10^{-1}$ & $5.23$ & $8.94 \times 10^{-8}$ & 24 \\
  & FVM          & ---     & ---     & $2.12 \times 10^{-1}$ & 0.1 \\
\bottomrule
\end{tabular}
\end{table}

\begin{figure}
    \centering
    \includegraphics[width=0.91\linewidth]{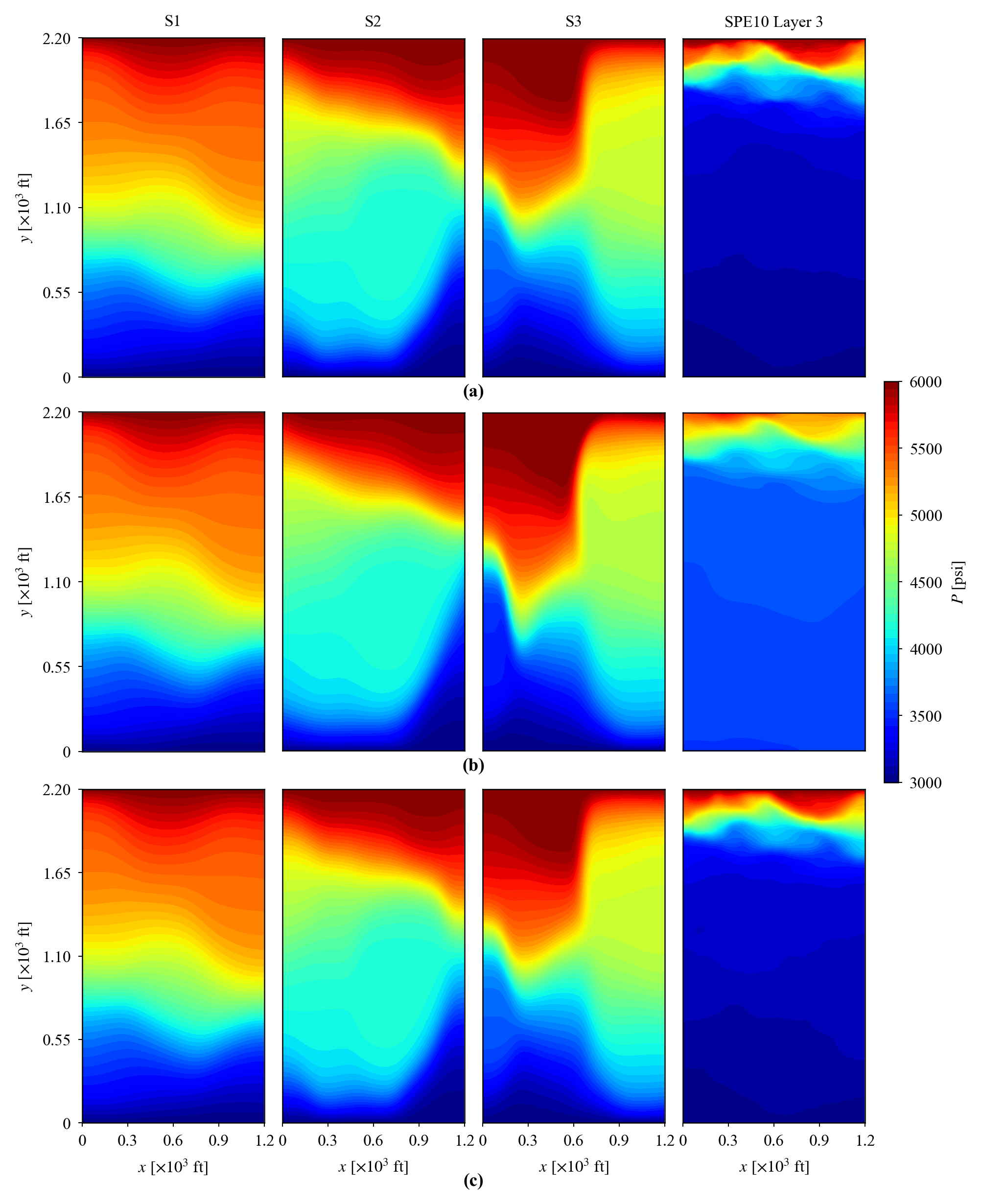}
    \caption{Pressure field solutions for (a) FVM (b) \textsc{NiL} and (c) \textsc{LiL} across the four permeability realizations.}
    \label{fig:pressure_fields2}
\end{figure}

\subsection{Conditioning of the Collocation System}
\label{sec:conditioning_results}

Having presented the individual benchmarks, we now examine the conditioning of the linear-in-learnables collocation matrix across all cases, which determines the round-off contribution to accuracy analyzed in Section~\ref{sec:properties}.
Table~\ref{tab:condition_numbers} reports, for each problem, the condition number $\kappa(\mathbf{A}^{(k)})$ at the smallest and largest parameter counts, the numerical rank at the largest parameter count, and the variation of $\kappa$ across outer iterations.
Two observations stand out.
First, $\kappa(\mathbf{A}^{(k)})$ spans an enormous range across problems from $\mathcal{O}(10^{2})$ for the smooth elasticity and Kovasznay flows to $\mathcal{O}(10^{17})$--$\mathcal{O}(10^{19})$ for the advection-dominated Buckley-Leverett and three-dimensional Beltrami problems. Yet, in every full-rank case, the reported accuracy is consistent with the round-off bound $\kappa\,\epsilon_{\mathrm{mach}}$ being dominated by the basis floor $\varepsilon_{L}$, so conditioning does not limit the results.
Second, the collocation matrix is full column rank at every parameter count tested except for the two most severely conditioned problems. The Buckley-Leverett and Beltrami problems develop rank deficits (of order $10$--$10^{2}$ out of $P$) at their largest parameter counts, and the rank-revealing column-pivoted QR factorization is essential in these cases, returning the minimum-norm solution rather than amplifying the deficient directions.
The condition numbers also stabilize after the first outer iterate, varying by less than a factor of two across the subsequent Newton sequence for the well-resolved problems, confirming that the conditioning is a property of the converged linearization.

\begin{table}
\centering
\caption{Condition number $\kappa(\mathbf{A}^{(k)})$ of the LiL-Q collocation matrix across the experiments.
Columns report the parameter range tested, $\kappa$ at the smallest and largest parameter counts (final-iterate values), the across-iteration stability factor $\max_k\kappa/\min_k\kappa$ after the first outer iterate.}
\label{tab:condition_numbers}
\small
\begin{tabular}{lccccc}
\hline
Problem & $P$ range & $\kappa$ (min $P$) & $\kappa$ (max $P$) & Iter.\ stability & Num.\ rank \\
\hline
Bratu                     & $25$--$225$   & $1.7\times10^{2}$ & $1.4\times10^{9}$  & $1.00\times$ & full ($=P$) \\
Burgers                   & $25$--$625$   & $8.9\times10^{1}$ & $6.8\times10^{9}$  & $4.30\times$ & full ($=P$) \\
Buckley--Leverett (no g.) & $64$--$1024$  & $6.9\times10^{3}$ & $3.5\times10^{16}$ & $1.20\times$ & $934$ (def.\ $90$) \\
Buckley--Leverett (g.)    & $64$--$1024$  & $7.8\times10^{3}$ & $4.5\times10^{16}$ & $1.31\times$ & $936$ (def.\ $88$) \\
Linear elasticity         & $50$--$1250$  & $1.1\times10^{2}$ & $4.4\times10^{4}$  & ---$^{\dagger}$ & full ($=P$) \\
Kovasznay                 & $75$--$1875$  & $7.4\times10^{1}$ & $1.7\times10^{5}$  & $1.05\times$ & full ($=P$) \\
Darcy / SPE10             & $3169$        & $2.6\times10^{6}$ & $1.3\times10^{7}$ & ---$^{\ddagger}$ & full ($=P$) \\
Beltrami (3D)             & $7984$        & $3.9\times10^{17}$ & $1.1 \times 10^{19}$ & $18.4\times$ & $7977$ (def.\ $7$) \\
\hline
\end{tabular}

\vspace{0.25em}
{\footnotesize $^{\dagger}$Elasticity converges in a single outer iterate ($\varepsilon_{L}=0$), so no across-iteration ratio applies. $^{\ddagger}$The four Darcy/SPE10 permeability fields share $P=3169$; the $\kappa$ range spans the four cases, each full rank.}
\end{table}

\section{Discussion}
\label{sec:discussion}

The controlled comparison across four formulations provides consistent evidence that convexity of the training problem is the primary determinant of convergence reliability in physics-informed learning.
Across all nonlinear benchmarks (Bratu, Burgers, Buckley-Leverett, Kovasznay, Beltrami), partial elimination of either nonlinearity source yields inconsistent and often negligible improvements.
The \textsc{NiL-Q} formulation linearizes the PDE at each outer iteration, yet the convergence histories show that \textsc{NiL-N} and \textsc{NiL-Q} trace similar loss trajectories at every basis dimension. This outcome is consistent with the stationarity condition~\eqref{eq:stationarity}, which remains a coupled nonlinear system whenever the network architecture introduces nested nonlinearity.
On the Buckley-Leverett problem with gravitational effects, \textsc{NiL-Q} is in fact counterproductive and the quasilinear outer iteration resets the optimizer state at each step, producing transient loss spikes that compound the cost of the already-difficult inner optimization.
The \textsc{LiL-N} formulation removes the architectural nonlinearity while retaining the nonlinear PDE operator. This helps at small $P$ but degrades as the parameter count grows (Tables~\ref{tab:bratu_results},~\ref{tab:burgers_iterations},~\ref{tab:bl_case1_iterations}), because a larger coefficient space couples more basis modes through the physical nonlinearity, rendering the loss landscape progressively harder to navigate.
Only \textsc{LiL-Q}, which eliminates both sources simultaneously, achieves convergence in single-digit outer iterations in most cases (rising to between 11 and 16 only at the coarsest, under-resolved basis sizes) across all problem sizes and nonlinearity types.

The experiments also illuminate the factors that govern the achievable accuracy of the converged \textsc{LiL-Q} solution.
Theorems~\ref{thm:stationary_residual} and~\ref{thm:residual_bounds} predict that the nonlinear PDE residual at convergence is controlled by the linearized residual, which in turn reflects the best-approximation error $\varepsilon_{L}$ of the basis.
When $\varepsilon_{L} > 0$, the residual is bounded away from zero and the method converges to a neighborhood of the exact solution whose radius shrinks with~$\varepsilon_{L}$. When $\varepsilon_{L} = 0$, the floor vanishes and the method recovers the exact solution.
The residual band plots (Figs.~\ref{fig:bratu_residual_bounds} - \ref{fig:bl_gravity_residual_bounds}) confirm the $\varepsilon_{L} > 0$ regime across all nonlinear benchmarks. Increasing $P$ does not accelerate convergence (the outer iteration count remains in single digits), but it lowers the floor to which the method converges as the richer basis reduces~$\varepsilon_{L}$.
The coupled linear elasticity experiment confirms the $\varepsilon_{L} = 0$ regime in which the basis span contains the exact displacement fields, and the method recovers the analytical solution to $\mathcal{O}(10^{-16})$ in a single direct solve.
The basis sensitivity study on the viscous Burgers equation (Table~\ref{tab:basis_configs}) further demonstrates that, for a fixed parameter count, the choice of basis functions can account for orders of magnitude differences in solution accuracy. Boundary alignment is necessary but not sufficient, and what appears to matter is how efficiently the column space of the collocation matrix captures the structure of the linearized right-hand side across the entire domain.
This study also resolves the empirical anomaly noted in the introduction.
Within the same convex linearize-then-fit framework and at the identical parameter count $P = 625$, replacing a random-feature \textsc{ELM} basis ($\operatorname{MSE} = 5.0 \times 10^{-2}$) with an orthogonal Sin$\times$Cheb basis lowers the converged error to $1.7 \times 10^{-8}$, a gap of more than six orders of magnitude attributable solely to the approximation power of the representation. Theorem~\ref{thm:stationary_residual} and the spectral-decay estimate~\eqref{eq:spectral_rate} predict that the residual floor is set by~$\varepsilon_{L}$, which a spectral basis drives down at a controlled rate while random features do not---so that the convex formulation, far from being the inaccurate option it appeared to be in the random-feature setting, attains the accuracy of the best nonconvex solvers once paired with a basis of controlled approximation power.
These observations suggest that the practitioner's primary design decision within the \textsc{LiL-Q} framework is the construction of the approximation space: problem-specific knowledge of boundary behavior, expected smoothness, and symmetries can be embedded directly into the basis to improve accuracy for a given parameter budget.

The Kovasznay and Beltrami experiments demonstrate that \textsc{LiL-Q} generalizes from scalar PDEs to coupled nonlinear systems without modification to the core algorithm. The only additional complexity is the assembly of the block-column collocation structure, which accommodates multiple field variables and their inter-equation coupling.
On the Kovasznay flow at $\mathrm{Re} = 40$, \textsc{LiL-Q} at $P = 675$ matches the velocity accuracy of Jin et al.'s~\cite{Jin2021} VV-NSFnet ($\epsilon_u \approx 2 \times 10^{-5}$) with over 100 times fewer trainable parameters, and at $P = 1{,}200$ surpasses all published baselines by three to four orders of magnitude in under two seconds (Table~\ref{tab:kovasznay_comparison}).
On the Beltrami flow, the largest system in this study ($P = 7{,}984$; coupled fields in four spatiotemporal dimensions), \textsc{LiL-Q} converges in 4 outer iterations and approximately 5~minutes on CPU, producing velocity errors of 0.034\% and pressure errors of 0.75\% at $t = 1.0$ (Table~\ref{tab:beltrami_comparison}). At comparable or smaller parameter counts, the VP-NSFnet~\cite{Jin2021} reports pressure errors of 8.93\% (12 times larger) and the DD-PINN~\cite{Gu2024} uses 16 times more parameters.
These comparisons suggest that the gains from convexifying the training problem carry over to the coupled, multi-field systems encountered in computational physics.

The computational cost analysis in Section~\ref{sec:flop_analysis} shows that \textsc{LiL-Q} performs fewer total floating-point operations than \textsc{NiL-N} whenever $K_{\textsc{LiL-Q}} P < K_{\textsc{NiL-N}}$. This condition holds across all experiments in this study because the reduction from $10^2$ to $10^4$ gradient steps to single-digit (occasionally low-double-digit) outer iterations more than offsets the additional factor of $P$ per iteration arising from the dense QR factorization.
A separate and more fundamental performance gap exists relative to classical numerical methods. The finite volume solver completes the Darcy problem in approximately 0.1~s, compared to 24~s for \textsc{LiL}, a three-order-of-magnitude discrepancy.
This gap arises not from algorithmic deficiency but from the structural properties of the resulting linear systems. The global support of the Fourier and polynomial bases used in this work produces dense collocation matrices requiring $\mathcal{O}(NP^2)$ factorization, whereas finite volume discretizations yield sparse, banded systems solvable in near-linear time (using algebraic multigrid for elliptic problems for instance).
Bridging this gap likely requires basis functions with compact or local support that would induce sparsity in the collocation matrix and permit efficient sparse solvers, without sacrificing the mesh-free character of the method.
The \textsc{LiL-Q} framework also retains the structural flexibility of physics-informed machine learning: techniques developed to scale standard PINN training, including domain decomposition, adaptive residual weighting, and dynamic collocation point sampling, are directly applicable to the convex subproblems.

\section{Conclusion}
\label{sec:conclusion}

This study develops \textsc{LiL-Q}, a solution framework for physics-informed neural networks that converts each outer iteration into a convex linear least-squares problem by combining Bellman-Kalaba quasilinearization with linear-in-learnables representations.
The method was assessed on seven benchmark problems spanning scalar nonlinear PDEs (Bratu, viscous Burgers, Buckley--Leverett), coupled linear and nonlinear systems (plane-strain elasticity, Kovasznay flow at $\mathrm{Re} = 40$, three-dimensional Beltrami flow), and steady-state flow with heterogeneous coefficients (single-phase Darcy with SPE10 permeability).
A controlled comparison against three formulations that retain one or both sources of nonlinearity showed that \textsc{LiL-Q} converges in single-digit outer iterations in most cases (and in no more than a few dozen even at the coarsest basis sizes) across all tested problem sizes, nonlinearity types, and spatial dimensions, while the competing formulations require $10^2$ to $10^4$ gradient iterations and frequently stagnate at larger basis dimensions.
On coupled Navier-Stokes benchmarks, \textsc{LiL-Q} matched or exceeded published PINN accuracy with up to two orders of magnitude fewer trainable parameters, completing in seconds to minutes rather than hours of gradient-based training.
When the exact solution lies within the span of the chosen basis, the method recovers it to machine precision in a single direct solve.

Three concrete directions follow from the present results.
First, the dense QR factorization can be ported to GPU-accelerated direct and preconditioned iterative solvers (for example, cuSOLVER), enabling the larger systems needed for more complex problems. This could include matrix free methods that do not require storage of the coefficient matrix.
Second, replacing the globally supported with compactly supported alternatives could induce sparsity in the collocation matrix, narrowing the performance gap relative to classical mesh-based solvers while retaining the mesh-free formulation. For example, ~\citet{locspej} and~\citet{loccg} show that Newton-Kantorovich updates for general advection-diffusion-reaction problems tend to be sparse; this may enable the use of a basis expansion process that augments compactly supported activations across iterations.
Third, extending the framework to inverse problems, where bilinear parameter-state coupling and complex inter-equation nonlinearities introduce structure not addressed by the current convexification, remains an open challenge.
Fourth, systematic head-to-head benchmarking against classical solvers (finite-volume, finite-element, and spectral methods) across the present benchmark suite, beyond the single comparison reported here for Darcy flow, would quantify the conditions under which the cost-and-reliability advantages of \textsc{LiL-Q} translate into wall-clock competitiveness with mature mesh-based discretizations.

\section*{Data and Code Availability}
The implementation and scripts required to reproduce the experiments reported in this paper are available at \url{https://github.com/awojinrin/lilq-pinn}. The repository currently includes runnable examples for representative scalar and coupled benchmarks.
 
\section*{Acknowledgments}
The authors acknowledge financial support from the industrial affiliates of the Computational Reservoir Engineering (CORE) industry-university consortium at Texas A\&M University.


\bibliographystyle{cas-model2-names}

\bibliography{cas-refs}

\end{document}